\documentclass[10pt,reqno]{amsart}
\usepackage{graphicx}
\usepackage[utf8]{inputenc}
\usepackage[T1]{fontenc} 
\usepackage{indentfirst,csquotes}

\usepackage{amssymb,amsthm,amsmath}
\usepackage{xcolor,paralist,hyperref,titlesec,fancyhdr,etoolbox}
\newtheorem{theorem}{Theorem}
\newtheorem{definition}{Definition}

\newtheorem{lemma}{Lemma}
\newtheorem{remark}{Remark}
\newtheorem{assumption}{Assumption}
\newtheorem{proposition}{Proposition}
\newtheorem{corollary}{Corollary}

\titleformat{\section}[display]{\normalfont\huge\bfseries\centering}{\centering\chaptertitlename\thechapter}{10pt}{\Large}
\titlespacing*{\section}{0pt}{0ex}{0ex}

\hypersetup{ colorlinks=true, linkcolor=black, filecolor=black, urlcolor=black }

\usepackage{booktabs}
\usepackage{lipsum}

\begin{document}
\title[Recovering the reaction coefficient in a time-fractional diffusion equation..]{Recovering the reaction coefficient in a time-fractional diffusion equation with variable coefficients in any  spatial dimension} 
\author[D.K.Durdiev]{D.K.Durdiev} 
\date{\today}
\address{$ ^1$Bukhara Branch of Romanovskii Institute of Mathematics,
Uzbekistan Academy of Sciences, Bukhara, Uzbekistan}
\address{$^2$Bukhara State University, Bukhara, 705018 Uzbekistan}
\email{durdiyevdd@gmail.com} 
\maketitle

\let\thefootnote\relax

{\bf Abstract}
We explore  the inverse problem of determining the reaction coefficient $q(x',t)$ in the time-fractional diffusion equation ${}^CD_t u-Lu+q(x',t)u=f$ in $\mathbb{R}^n\times(0,T]$, $x=(x',x_n)$, $0<\alpha<1$, from the trace of the solution on the hyperplane $\{x_n=0\}$. Here ${}^CD_t$ is the Gerasimov--Caputo derivative and $L$ is a uniformly elliptic second-order operator with variable, $x_n$-independent coefficients and without zero-order term. For $L=\Delta$ this problem was studied in dimensions $n=1,2,3$ by methods based on pointwise bounds of the kernels, which are not available for $n\ge4$. Our approach relies on the Eidelman--Kochubei fundamental solution: although its singularity on the diagonal depends on the dimension, we show that it satisfies integral bounds of the same form for all $n\ge1$. Using these bounds, we prove existence, uniqueness and stability of regular solutions of the direct problem with a H\"{o}lder continuous reaction coefficient. Without assuming additional smoothness of the solution, we show that the inverse problem is equivalent to a closed system of integral equations for $q$ and the derivatives $\partial_{x_n}u$, $\partial^2_{x_n}u$. For the inverse problem we establish a conditional Lipschitz stability estimate on the whole time interval and global uniqueness in every dimension, and prove local existence of a solution. In the case $a_{in}\equiv0$, $b_n\equiv0$ we obtain a stability estimate with explicit constants, which reduces to the known estimates for the Laplacian. An explicit two-dimensional example with a mixed coefficient and a normal drift shows that the hypotheses of the main theorems are compatible and that the corresponding terms in the trace condition cannot be omitted.
\bigskip

{\bf Keywords:} Gerasimov--Caputo derivative, Direct problem, Inverse coefficient problem, Fundamental solution, Conditional Lipschitz stability, Global uniqueness, Local existence, Gronwall inequality.

\section*{1. Introduction}

Equations with a fractional time derivative of order $\alpha\in(0,1)$ are a standard model of subdiffusion, that is, of transport processes in which the mean square displacement of a particle grows like $t^{\alpha}$ rather than $t$. Such processes are observed in fractal and strongly heterogeneous media, porous materials and amorphous semiconductors; see the survey \cite{MK00}, the paper
\cite{SW89} and the monographs \cite{Pod99,KST}. In this paper we study the equation
\begin{equation*}
{}^CD_t u-Lu+q(x',t)u=f\qquad\text{in }\mathbb{R}^n\times(0,T],
\end{equation*}
where ${}^CD_t$ is the Gerasimov--Caputo derivative of order $\alpha\in(0,1)$, $L$ is a uniformly elliptic second-order operator with variable coefficients and without zero-order term, and $q$ is a reaction (absorption) coefficient depending on time and on the tangential variables
$x'=(x_1,\dots,x_{n-1})$. We are concerned with the Cauchy problem for this equation (the direct problem) and with the inverse problem of recovering $q$ from the trace $u(x',0,t)=g(x',t)$ of the solution on the hyperplane $\{x_n=0\}$.

\smallskip\noindent\emph{The direct problem.}
For $L=\Delta$ the fundamental solution of the Cauchy problem was found by Schneider and Wyss \cite{SW89} and by Kochubei \cite{Ko90} in
terms of Fox's $H$-functions; a different representation, valid also for diffusion-wave equations, was obtained by Pskhu \cite{Psk09}.
For operators with variable coefficients, Eidelman and Kochubei \cite{EK} constructed the Green matrix $(Z,Y)$ of the Cauchy problem by
the Levi parametrix method and obtained pointwise estimates of $Z$, $Y$ and of their derivatives; see also the monograph \cite{EIK04} and
the extension to diffusion-wave equations \cite{Koc14}. A feature that has no analogue in the classical parabolic theory
\cite{Fri64,LSU} is that for $n\ge2$ the fundamental solution is singular not only at $t=0$ but also on the diagonal $x=\xi$ for every
$t>0$, and the order of this singularity depends on the dimension. Other approaches to time-fractional diffusion equations include
eigenfunction expansions in bounded domains \cite{SY11,KRY20,Jin21}, the maximum principle \cite{Luc09} and decay estimates for
equations in the whole space \cite{KSVZ16}.

\smallskip\noindent\emph{Inverse problems.}
Coefficient inverse problems for parabolic equations with unknown coefficients depending on time, and with additional information given on
a part of the boundary or on an interior manifold, have a long history; we refer to the monographs \cite{POV00,Iva03}. For time-fractional
equations the theory of inverse problems has developed rapidly; see the survey \cite{JR15} and the books \cite{Jin21,KR23,DD25}. Uniqueness for
the simultaneous determination of the order of the derivative and of a spatially varying coefficient was proved in \cite{CNYY09}, and the
recovery of a space-dependent potential was studied in \cite{ZZ17,KR19,JZ21a,JZ21b}. The inverse problem considered in this paper was
studied for $L=\Delta$ by D.\,K.~Durdiev \cite{DD21} ($n=1$), by D.\,K.~Durdiev, A.\,A.~Rahmonov and Z.\,R.~Bozorov \cite{DRB21} ($n=2$)
and by U.\,D.~Durdiev \cite{UD21} ($n=3$). In these works the inverse problem is reduced to a system of integral equations, and local
existence, global uniqueness and a stability estimate are proved. The arguments use the explicit form of the kernels for $L=\Delta$ and
pointwise bounds that are specific to each dimension. Since the kernel $Y$ is unbounded near the diagonal for $n\ge4$
(Table~\ref{tab:sing}), such arguments do not extend to higher dimensions, nor to operators with variable coefficients.  A systematic exposition of coefficient determination problems for time-fractional diffusion
equations, including problems for the Cauchy problem with overdetermination conditions, is given
in the monograph \cite{DD25}.

\smallskip\noindent\emph{Main results.}
The aim of the paper is to treat the direct and the inverse problem for a general operator $L$ in an arbitrary dimension $n\ge1$.
Our main results are as follows.
 \begin{enumerate}
\item We collect the estimates of the Eidelman--Kochubei kernels (Section 3), with attention to the fact that their pointwise
behaviour differs for $n=1,2,3,4$ and $n\ge5$, and show that after normalisation they are dominated by a single profile; this gives integral
bounds of the same form in all dimensions (Lemmas \ref{lem:unified} and \ref{lem:int}).
   \item For the direct problem with a H\"{o}lder continuous reaction coefficient we prove existence and uniqueness of a regular solution,
a priori estimates and a stability estimate (Theorems~\ref{thm:direct} and \ref{thm:stab}). The solution depends on the reaction
coefficient only through its sup-norm.
      \item We prove that the solution is twice differentiable in the normal variable $x_n$ (Theorem~\ref{thm:diff}) instead of assuming it,
show that the inverse problem is equivalent to a problem for $\partial_{x_n}u$, $\partial^2_{x_n}u$ with a trace condition on
$\{x_n=0\}$ (Theorem~\ref{thm:equiv}), and reduce it to a closed system of integral equations (Proposition~\ref{prop:system}).
         \item For the inverse problem we prove a conditional Lipschitz stability estimate in the sup-norm on the whole interval $[0,T]$
(Theorem~\ref{thm:A}) and global uniqueness (Theorem~\ref{thm:B}). These results hold in every dimension
and also for operators with mixed coefficients $a_{in}$ and a normal drift $b_n$; in that case the trace condition contains tangential
derivatives of $\partial_{x_n}u$, and the resulting Volterra operator has order $\alpha/2$ instead of $\alpha$.
            \item We prove the local existence in the general case (Theorem~\ref{thm:Dgen}). When $a_{in}\equiv0$ and $b_n\equiv0$ we also obtain a stability estimate with explicit constants, which reduces to the estimates of
\cite{DD21,DRB21,UD21} for $L=\Delta$ (Theorem~\ref{thm:C}), and local existence of a solution (Theorem~\ref{thm:D}). 
 \end{enumerate} 

\smallskip\noindent\emph{Organisation of the paper.}
In Section 2 we state the problem and the assumptions and introduce the class of regular solutions.
Section 3 recalls the results of \cite{EK} and derives the dimension-independent bounds. The direct problem is studied in
Section 4, and the reduction of the inverse problem is carried out in Section 5. Section 6
contains the formulation of the main results on the inverse problem, Section 7 auxiliary lemmas, and
Sections8--10 the proofs. Section 11 presents an example illustrating the applicability of the obtained results. Section 12 contains a discussion of the results and open questions.

\section*{2. Statement of the problem and standing assumptions}\label{sec:statement}

Let $n\ge 1$, $T>0$, $0<\alpha<1$. We denote a point $x\in  \mathbb{R}^n$ by splitting it into its first $n-1$ coordinates and its last coordinate, writing $x=(x',x_n)$ with
$x'=(x_1,\dots,x_{n-1})\in\mathbb{R}^{n-1}$; for $n=1$ the variable $x'$ is absent.
We write $e_n=(0,\dots,0,1)\in \mathbb{R}^n$ for the last standard basis vector, so that $x+he_n=(x',x_n+h)$, $h\in \mathbb{R}$.
Let  $\Pi_T= \mathbb{R}^n\times(0,T]$ and  $\overline\Pi_T= \mathbb{R}^n\times[0,T]$ denote the spacetime domains.

We adopt the definition of the regularized (Gerasimov--Caputo) fractional derivative from \cite{EK}:
\begin{equation}\label{eq:caputo}
({}^CD_t u)(x,t)=\frac{\partial}{\partial t}\,
I^{1-\alpha}_{0+}\bigl[u(x,\cdot)-u(x,0)\bigr](t),\qquad
(I^{\beta}_{0+}\varphi)(t)=\frac{1}{\Gamma(\beta)}\int_0^t\frac{\omega(\tau)d\tau}{(t-\tau)^{1-\beta}},
\end{equation}
which coincides with $\Gamma(1-\alpha)^{-1}\int_0^t(t-\tau)^{-\alpha}u_\tau(x,\tau)d\tau$
for functions absolutely continuous in $t,$ where $\Gamma(\cdot)$ is the Euler's gamma function.

Consider the Cauchy problem
\begin{align}
&({}^CD_t u)(x,t)-Lu(x,t)+q(x',t)\,u(x,t)=f(x,t),&&(x,t)\in\Pi_T,\label{eq:main}\\
&u(x,0)=\varphi(x),&& x\in \mathbb{R}^n,\label{eq:init}
\end{align}
where
\begin{equation*}
L=\sum_{i,j=1}^n a_{ij}(x)\frac{\partial^2}{\partial x_i\partial x_j}+\sum_{j=1}^n b_j(x)\frac{\partial}{\partial x_j}.
\end{equation*}

\smallskip\noindent
For this problem we explore the inverse problem of finding  $q(x',t)$ such that the solution of \eqref{eq:main}, \eqref{eq:init} satisfies
\begin{equation}\label{eq:over}
u(x',0,t)=g(x',t),\qquad x'\in \mathbb{R}^{n-1},\ t\in(0,T].
\end{equation}
For $n=1$ this reads $q=q(t)$, $u(0,t)=g(t)$.

For $L=\Delta$ this is precisely the problem of
\cite{DD21} ($n=1$), \cite{DRB21} ($n=2$) and \cite{UD21} ($n=3$).

\subsection*{2.1 Assumptions on the operator $L$}

\begin{assumption}\label{ass:A} 

(A1) \quad $a_{ij}=a_{ji}$ and $b_j$ are bounded real functions, H\"{o}lder continuous on $\mathbb{R}^n$
with some exponent $\gamma\in(0,1)$, and there exists  $\delta>0$ such that
$\sum_{i,j}a_{ij}(x)\zeta_i\zeta_j\ge\delta|\zeta|^2$ holds  for all $x,\zeta\in\mathbb{R}^n$.\\
(A2) \quad The coefficients do not depend on $x_n$: $a_{ij}=a_{ij}(x')$, $b_j=b_j(x')$. 

We shall also single out the special case\\
(A3) \quad $a_{in}\equiv0$ for $1\le i\le n-1$ and $b_n\equiv0$, i.e.\ $L=L'+a_{nn}(x')\partial_{x_n}^2$, 
where, throughout this paper
\begin{equation}\label{eq:Lprime}
\quad\quad\quad\quad\quad\quad\quad\quad\quad\quad L'=\sum_{i,j=1}^{n-1}a_{ij}(x')\frac{\partial^2}{\partial x_i\partial x_j}+\sum_{j=1}^{n-1}b_j(x')\frac{\partial}{\partial x_j}
\end{equation}
is the ``tangential part'' of $L$. Under (A1), $L'$ is uniformly elliptic in $\mathbb{R}^{n-1}$ (the
matrix $(a_{ij})_{i,j<n}$ is a principal submatrix of a uniformly positive matrix), so $L'$ satisfies (A1) in
$\mathbb{R}^{n-1}$. In particular, the Laplacian $L=\Delta$ satisfies (A1)--(A3).
\end{assumption}

\begin{remark}\label{rem:A2}
The role of hypothesis (A2) in Section 5 is manifested through the invariance of the problem under translations in the $x_{n}$-coordinate, as established in Lemma~\ref{lem:transl}.\end{remark}

\subsection*{2.2. Function spaces}

For $n\ge1$ let $C_b(\mathbb{R}^n)$ be the bounded continuous functions with $|\psi|_0=\sup_{\mathbb{R}^n}|\psi|$, and for
$\ell\in(0,1)$
\begin{equation*}
[\psi]_{\ell}=\sup_{x\ne\tilde x}\frac{|\psi(x)-\psi(\tilde x)|}{|x-\tilde x|^{\ell}},\qquad
H^{\ell}(\mathbb{R}^n)=\{\psi\in C_b(\mathbb{R}^n):\ |\psi|_\ell:=|\psi|_0+[\psi]_{\ell}<\infty\}.    
\end{equation*} 
$H^{k+\ell}(\mathbb{R}^n)$ ($k\in\mathbb N$) consists of functions whose derivatives of order $\le k$ belong to
$H^{\ell}(\mathbb{R}^n)$, with the usual norm. In  $\mathbb{R}^n\times[0,T]$ we use
\begin{equation*}
C^{\ell,0}_b(\mathbb{R}^n\times[0,T])=\Bigl\{h(x, t)\in C_b(\mathbb{R}^n\times[0,T]):\ \|h\|_\ell:=\|h\|_0+\sup_{t\in[0,T]} [h(\cdot,t)]_{\ell}<\infty\Bigr\},    
\end{equation*} 
where $\|h\|_0=\sup_{\mathbb{R}^n\times[0,T]}|h|$,
and the time-dependent sup norm $\|h\|(t)=\sup_{x\in\mathbb{R}^n}|h(x,t)|$
(we use global H\"{o}lder seminorms, which are equivalent to the norms of the spaces in \cite{LSU} with a localization radius \(\rho _{0}\) considered in \cite{DD21,UD21}).

Following \cite{EK}, $H^{\alpha+\lambda}_{\nu}[0,T]$ denotes the class of functions $\theta$ on $[0,T]$ such that
$t^{\nu}\theta(t)$ is H\"{o}lder  continuous on $[0,T]$ with exponent $\alpha+\lambda$. Throughout the paper we shall assume
\begin{equation}\label{eq:lamnu}
\qquad\qquad\qquad\qquad\qquad\qquad 0<\lambda<1-\alpha,\qquad \alpha+\lambda<\nu<1+\lambda.
\end{equation}

\subsection*{2.3. Formulation of the generic direct problem and regular solutions}

Both the problem for $u$ and the problems obtained after differentiation in $x_n$, and also a problem in
$\mathbb{R}^{n-1}$ appearing in Section 5, are of the form
\begin{equation}\label{eq:P}
\qquad\qquad\qquad\qquad {}^CD_t z-\mathcal L z+p(x,t)\,z=F(x,t)\ \text{ in } \mathbb{R}^N\times(0,T],\qquad z(x,0)=\psi(x),
\tag{P}
\end{equation}
where $\mathcal L$ is an operator of \cite{EK} (with $c(x)=0$) in $\mathbb{R}^N$, and the coefficient $p$
may depend on all variables. Hence, we analyze \eqref{eq:P} in full generality.

\begin{definition}\label{def:reg}
$\mathcal{U}_T(\mathbb{R}^N)$ is the set of $z\in C_b(\mathbb{R}^N\times[0,T])$ such that\\
\qquad\qquad\qquad (a) $\sup_{t\in[0,T]}[z(\cdot,t)]_{\ell'}<\infty$ for some $\ell'\in(0,1)$ (this requirement is dropped if $N=1$);\\
\qquad\qquad\qquad (b) $z(\cdot,t)\in C^2(\mathbb{R}^N)$ for each $t\in(0,T]$;\\
\qquad\qquad\qquad (c) for each $x$, $I^{1-\alpha}_{0+}[z(x,\cdot)-z(x,0)]\in C^1(0,T]$;\\
\qquad\qquad\qquad (d) for each $x$, $z(x,\cdot)\in H^{\alpha+\lambda}_\nu[0,T]$ with $\lambda,\nu$ fixed in \eqref{eq:lamnu}.
\end{definition}

\section*{3. The fundamental solution}\label{sec:fs}

\subsection*{3.1. The Green matrix}
Let $\mathcal L$ satisfy (A1) in $\mathbb{R}^N$ for some $N\in \mathbb{N}$. We recall the results of \cite{EK} that are used below.

\medskip
\noindent\textbf{The parametrix $(Z_0,Y_0)$.}
Fix $\xi\in \mathbb{R}^N$ and let $\mathcal L_\xi$ denote the operator obtained from
$\mathcal L$ by freezing the leading coefficients at the point $\xi$ and dropping
the lower-order terms:
\begin{equation*}
 \qquad\qquad\qquad\qquad\qquad\qquad\qquad \mathcal L_\xi u=\mathbb D_t^{(\alpha)}u-\sum_{i,j=1}^N a_{ij}(\xi)\,
  \frac{\partial^2 u}{\partial x_i\partial x_j}.
\end{equation*}
Put $A(\xi)=(a_{ij}(\xi))_{i,j=1}^N$, $A^{-1}(\xi)=(A^{(ij)}(\xi))_{i,j=1}^N$ and
\begin{equation*}
 \qquad\qquad\qquad\qquad\qquad\qquad \rho_\xi(y)=\sum_{i,j=1}^N A^{(ij)}(\xi)\,y_iy_j=\langle A^{-1}(\xi)y,y\rangle,
  \qquad y\in \mathbb{R}^N .
\end{equation*}
By the uniform ellipticity and boundedness of $a_{ij}$ there exist constants
$0<c_1\le c_2$, independent of $\xi$, such that
$c_1|y|^2\le\rho_\xi(y)\le c_2|y|^2$.

We use Fox's $H$-function of the type $H^{20}_{12}$ \cite[Ch 1]{Math}:
\begin{equation}\label{eq:H2012}
 \qquad\qquad\qquad\qquad H^{20}_{12}\!\left[z\,\middle|\,
  \begin{matrix}(c,\gamma)\\ (\tfrac N2,1),\,(1,1)\end{matrix}\right]
  =\frac1{2\pi i}\int_{\mathcal C}
  \frac{\Gamma(\tfrac N2+s)\,\Gamma(1+s)}{\Gamma(c+\gamma s)}\,z^{-s}\,ds ,
  \qquad z>0,
\end{equation}
where $\mathcal C$ is a loop beginning at $-\infty-i\sigma$ and ending at
$-\infty+i\sigma$ ($\sigma>0$) that encircles all poles of
$\Gamma(\tfrac N2+s)\Gamma(1+s)$. We need to consider only the cases
$(c,\gamma)=(1,\alpha)$ and $(c,\gamma)=(\alpha,\alpha)$.

The Green matrix $(Z_0,Y_0)$ of the Cauchy problem for $\mathcal L_\xi$ is given
explicitly by
\begin{align}
\qquad\qquad\qquad  Z_0(t,y;\xi)&=\frac{\pi^{-N/2}}{(\det A(\xi))^{1/2}}\,\rho_\xi(y)^{-N/2}\,
  H^{20}_{12}\!\left[\frac{\rho_\xi(y)}{4t^{\alpha}}\,\middle|\,
  \begin{matrix}(1,\alpha)\\ (\tfrac N2,1),\,(1,1)\end{matrix}\right],
  \label{eq:Z0}\\[1mm]
\qquad\qquad\qquad  Y_0(t,y;\xi)&=\frac{\pi^{-N/2}}{(\det A(\xi))^{1/2}}\,\rho_\xi(y)^{-N/2}\,
  t^{\alpha-1}\,
  H^{20}_{12}\!\left[\frac{\rho_\xi(y)}{4t^{\alpha}}\,\middle|\,
  \begin{matrix}(\alpha,\alpha)\\ (\tfrac N2,1),\,(1,1)\end{matrix}\right],
  \label{eq:Y0}
\end{align}
$t>0$, $y\in \mathbb{R}^N\setminus\{0\}$. In the theorem below they are taken with
$y=x-\xi$, i.e. $Z_0(t,x-\xi;\xi)$ and $Y_0(t,x-\xi;\xi)$.

Note that $Y_0=D^{1-\alpha}_{0+}Z_0$, where $D^{1-\alpha}_{0+}$ is the Riemann--Liouville derivative in $t$.
For $\alpha=1$ one has 
$$
H^{20}_{12}\!\left[z\,\middle|\,
  \begin{matrix}(1,1)\\ (\tfrac N2,1),\,(1,1)\end{matrix}\right]=z^{N/2}e^{-z},
$$
and 
\eqref{eq:Z0} reduces to the Gaussian kernel 
$(4\pi t)^{-N/2}(\det A(\xi))^{-1/2}\exp\{-\rho_\xi(y)/(4t)\}$.

\begin{theorem}[\cite{EK}]\label{thm:EK}
There exist functions $Z(t,x,\xi)$, $Y(t,x,\xi)$, $t\in(0,T]$, $x\ne\xi$, with the following properties:\\
(i) $Z=Z_0(t,x-\xi;\xi)+V_Z(t,x,\xi)$, $Y=Y_0(t,x-\xi;\xi)+V_Y(t,x,\xi)$, where $Z_0$, $Y_0$ are given by \eqref{eq:Z0}, \eqref{eq:Y0}, and $V_Z$, $V_Y$ satisfy the estimates of \cite[Theorem (a)(i)--(vii)]{EK}.\\
(ii) If $\psi$ is bounded and continuous (locally H\"{o}lder if $N>1$) and $G$ is bounded, continuous on $\mathbb{R}^N\times[0,T]$ and
locally H\"{o}lder continuous in $x$, then
\begin{equation}\label{eq:repr}
\begin{gathered}
Z=Z_0\psi+K[G],\\
(Z_0\psi)(x,t)=\int_{\mathbb{R}^N}Z(t,x,\xi)\psi(\xi)d\xi,\qquad
K[G](x,t)=\int_0^t\!\!\int_{\mathbb{R}^N}Y(t-\tau,x,\xi)G(\xi,\tau)d\xi d\tau,
\end{gathered}
\end{equation}
is a classical solution of ${}^CD_t z-\mathcal Lz=G(x,t)$, $z(\cdot,0)=\psi$, and $z(x,\cdot)\in H^{\alpha+\lambda}_\nu[0,T]$
for every $\lambda,\nu$ as in \eqref{eq:lamnu} \textup{(\cite[Theorem (b)]{EK})}.\\
(iii) \textup{(}\cite{Ko90}, \cite[Sect.~2.2]{EK}\textup{)} A bounded classical solution of
${}^CD_t z-\mathcal Lz=G$, $z(\cdot,0)=\psi$ with $z(x,\cdot)\in H^{\alpha+\lambda}_\nu[0,T]$, $0<\lambda<1-\alpha$, $0\le\nu<\lambda+1$, is unique.\\
(iv) $Z\ge0$ and $Y\ge0$ \textup{(\cite[Theorem (c)]{EK})}.
\end{theorem}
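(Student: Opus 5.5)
This is a cited result, so the plan is to reconstruct the Eidelman--Kochubei Levi parametrix argument in outline rather than to prove anything new. The first step is to study the frozen-coefficient kernels $Z_0$, $Y_0$ from \eqref{eq:Z0}, \eqref{eq:Y0}. From the Mellin--Barnes representation \eqref{eq:H2012} I would derive the asymptotics of $H^{20}_{12}$. At infinity there is exponential decay, of the form $\exp\{-c\,(\rho/t^\alpha)^{1/(2-\alpha)}\}$. At zero the behaviour is governed by the poles of $\Gamma(\tfrac N2+s)\Gamma(1+s)$, which gives the dimension-dependent diagonal singularities (logarithmic for $N=2$, and so on). Combining these yields pointwise bounds for $Z_0$, $Y_0$ and their $x$-derivatives up to order two. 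I would also check that $Y_0=D^{1-\alpha}_{0+}Z_0$ and that $(Z_0,Y_0)$ solves the model equation $\mathcal L_\xi$ via its Fourier--Laplace transform, namely $t^{\alpha-1}E_{\alpha,\alpha}(-t^\alpha\langle A\zeta,\zeta\rangle)$.

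Next I would seek $Y$ in the Levi form
\[
Y(t,x,\xi)=Y_0(t,x-\xi;\xi)+\int_0^t\!\!\int Y_0(t-\tau,x-\eta;\eta)\Phi(\tau,\eta,\xi)\,d\eta\,d\tau .
\]
Here $\Phi$ solves the Volterra equation $\Phi=M+M\ast\Phi$, with $M=(\mathcal L-\mathcal L_\xi)Y_0$ computed using the frozen coefficients. The Hölder continuity in (A1) gives $|M|\lesssim |x-\xi|^\gamma\,|D^2Y_0|+|DY_0|$. This makes the singularity of $M$ integrable, so the series $\sum M^{\ast k}$ converges. The convolutions gain powers $t^{\alpha\gamma/2}$, and Gamma-function iteration gives factorial decay. $Z$ is built the same way, with the correction driven by $\Phi$. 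This produces $V_Z$ and $V_Y$ together with their estimates, which is (i).

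For (ii), the plan is to verify directly that $Z_0\psi$ is a classical solution. For $K[G]$ I would use Hölder continuity in $x$ to differentiate twice under the integral, via the standard trick of subtracting $G(x,\tau)$. The regularity $z(x,\cdot)\in H^{\alpha+\lambda}_\nu$ would follow from estimating time differences of the kernels. Uniqueness (iii) would be proved by showing that the difference $w$ of two solutions solves the homogeneous problem. I would apply a maximum principle for the Caputo derivative (Luchko-type, with the weighted Hölder class guaranteeing the extremum inequality), or equivalently use Kochubei's argument via a barrier $\varepsilon(1+|x|^2)\,E_\alpha(Ct^\alpha)$. Positivity (iv) would come from the same maximum principle applied to nonnegative data, combined with an approximation of delta data.

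The main obstacle is the parametrix step for $N\ge2$. There the second derivatives of $Y_0$ carry the extra diagonal singularity $|x-\xi|^{-N}$ uniformly in $t$, so the usual parabolic scaling bounds fail. The integrability of $M$ must be shown from the combined $(t,|x|)$ profile, case by case in $N$.
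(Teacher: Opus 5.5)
The paper does not reprove this theorem. It quotes it from \cite{EK}, whose argument is the one you sketch: a Levi parametrix on the frozen kernels, plus a maximum principle for the regularized derivative giving uniqueness and positivity. Your treatment of $Y$, (iii) and (iv) is sound. The step that fails as written is the construction of $Z$. Write $M=(\mathcal L_x-\mathcal L_\xi)Y_0$ and $M_Z=(\mathcal L_x-\mathcal L_\xi)Z_0$ for the spatial parts, and let $\ast$ denote the Levi space--time convolution. The basic identity $({}^CD_t-\mathcal L_x)(Y_0\ast F)=F-M\ast F$ then gives $({}^CD_t-\mathcal L_x)(Z_0+Y_0\ast\Phi)=M-M_Z\neq0$, so the correction of $Z$ cannot be driven by $\Phi$ itself. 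You need $Z=Z_0+Y_0\ast Q$ with a second density solving $Q=M_Z+M\ast Q$, i.e.\ $Q=M_Z+\Phi\ast M_Z$. Only the resolvent $\Phi$ is shared with $Y$.

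This matters because you have attached the dangerous diagonal singularity to the wrong kernel. By Table~\ref{tab:sing}, near the diagonal $D^2_xY_0\approx t^{-1-\alpha}|x-\xi|^{2-N}$, which is integrable in $\xi$; indeed $\int|D^2_xY|\,d\xi\le Cs^{-1}$ (Lemma~\ref{lem:gradK}(a)). For $M$ the only obstruction is the non-integrable time factor $s^{-1}$, and the bound $|a_{ij}(x)-a_{ij}(\xi)|\le C|x-\xi|^\gamma$ turns it into $s^{\alpha\gamma/2-1}$. The spatially non-integrable singularity $t^{-\alpha}|x-\xi|^{-N}$ belongs to $D^2_xZ_0$. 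It enters through the free term $M_Z$ of the $Q$-equation, where the same H\"older factor gives $t^{-\alpha}|x-\xi|^{\gamma-N}$. It also enters through the second $x$-derivatives of $\int Z(t,x,\xi)\psi(\xi)\,d\xi$, which is why $\psi$ must be H\"older for $N>1$ in (ii). So the delicate dimension-by-dimension convolution estimates you anticipate are needed chiefly for $Q$ and $V_Z$, which is exactly the part of the construction missing from your sketch.
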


The structural absence of a zero-order term in the operator $\mathcal{L}$ yields two exact identities.

\begin{lemma}\label{lem:norm}
For all $x\in\mathbb{R}^N$ and $t\in(0,T]$,
\begin{equation}\label{eq:norm}
\qquad\qquad\qquad\qquad \int_{\mathbb{R}^N}Z(t,x,\xi)d\xi=1,\qquad \int_{\mathbb{R}^N}Y(t,x,\xi)d\xi=\frac{t^{\alpha-1}}{\Gamma(\alpha)} .
\end{equation}
\end{lemma}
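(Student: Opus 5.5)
The plan is to test the representation formula of Theorem~\ref{thm:EK}(ii) against data for which the solution is known explicitly, and then conclude by the uniqueness statement (iii).

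\textbf{First identity.} Take $\psi\equiv1$ and $G\equiv0$. Since $\mathcal L$ has no zero-order term, $z\equiv1$ is a bounded classical solution of ${}^CD_t z-\mathcal L z=0$, $z(\cdot,0)=1$. Indeed, ${}^CD_t 1=0$ by \eqref{eq:caputo}, $\mathcal L1=0$, and $z(x,\cdot)$ is constant, so it lies in $H^{\alpha+\lambda}_\nu[0,T]$ because $t^\nu$ is H\"older of order $\alpha+\lambda<\nu$ (or $\nu\ge1$) on $[0,T]$. On the other hand, Theorem~\ref{thm:EK}(ii) gives the solution $z(x,t)=\int Z(t,x,\xi)\,d\xi$, which is bounded and has the required H\"older regularity in $t$. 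Uniqueness (iii) then yields $\int Z\,d\xi=1$ for all $t\in(0,T]$.

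\textbf{Second identity.} Take $\psi\equiv0$ and $G\equiv\theta(t)$, a bounded continuous function of $t$ only, for example $\theta\equiv1$. The function $w(t)=I^{\alpha}_{0+}\theta(t)$ does not depend on $x$, so $\mathcal Lw=0$. It satisfies ${}^CD_t w=\frac{d}{dt}I^{1-\alpha}I^{\alpha}\theta=\frac{d}{dt}I^1\theta=\theta$ and $w(0)=0$. For $\theta\equiv1$ we have $w=t^\alpha/\Gamma(1+\alpha)$, and the $H^{\alpha+\lambda}_\nu$ membership of $t^{\nu+\alpha}$ is immediate. By (ii) and (iii),
\[
\int_0^t m(t-\tau)\theta(\tau)\,d\tau=\frac{1}{\Gamma(\alpha)}\int_0^t(t-\tau)^{\alpha-1}\theta(\tau)\,d\tau,
\qquad m(s):=\int_{\mathbb R^N}Y(s,x,\xi)\,d\xi .
\]
To extract $m$ pointwise, I will use enough test functions, namely all $\theta\in C[0,T]$ (each gives an $x$-independent solution $I^\alpha\theta$, and its regularity follows from (ii) itself). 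This shows that $m(\cdot)-s^{\alpha-1}/\Gamma(\alpha)$ annihilates every continuous $\theta$ under Volterra convolution. Titchmarsh's theorem, or simply the choice $\theta=\mathbf 1_{[0,\epsilon]}$ approximated by continuous functions, then forces equality a.e. Equality everywhere follows from continuity of $m$ in $s>0$.

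\textbf{Main obstacle.} The delicate step is justifying that $m(s)$ is finite and continuous in $s$ and independent of $x$ being fixed. Finiteness and continuity follow from $Y\ge0$ and the Eidelman--Kochubei bounds on $Y_0$ and $V_Y$, which are integrable in $\xi$ (the diagonal singularity is at worst $|x-\xi|^{-N+2}$ or logarithmic, hence locally integrable) and are uniform for $s$ in compact subsets of $(0,T]$. Dominated convergence then gives continuity. An alternative that avoids Titchmarsh is to compute directly: $\int Y_0(s,x-\xi;\xi)\,d\xi$ for frozen coefficients equals $s^{\alpha-1}/\Gamma(\alpha)$ by the Mellin transform of $H^{20}_{12}$ at $s=0$. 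However, the dependence on $\xi$ through $A(\xi)$ breaks the exact computation, so I prefer the uniqueness argument above.
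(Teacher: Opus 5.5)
Your argument is in substance the paper's: the same two explicit solutions $z\equiv1$ and $z=t^{\alpha}/\Gamma(1+\alpha)$, uniqueness from Theorem~\ref{thm:EK}(iii), and continuity of $m(s)=\int Y(s,x,\xi)\,d\xi$ on $(0,T]$ by dominated convergence with the majorant of Lemma~\ref{lem:unified}.

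The one step that does not hold up is the extension to all $\theta\in C[0,T]$. To apply (iii) you need $I^{\alpha}_{0+}\theta$ itself to lie in $H^{\alpha+\lambda}_\nu[0,T]$. Theorem~\ref{thm:EK}(ii) makes that assertion for $K[\theta]$, not for $I^{\alpha}_{0+}\theta$, and you only know $K[\theta]=I^{\alpha}_{0+}\theta$ after uniqueness has been applied. So ``its regularity follows from (ii) itself'' is circular. Nor can the regularity be checked directly for merely continuous $\theta$. Take $\theta(t)=(t-T/2)_+^{\epsilon}$ with $0<\epsilon<\lambda$. Then $I^{\alpha}_{0+}\theta=\frac{\Gamma(1+\epsilon)}{\Gamma(1+\alpha+\epsilon)}(t-T/2)_+^{\alpha+\epsilon}$, which is not H\"older of order $\alpha+\lambda$ at $t=T/2$. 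The weight $t^\nu$ is smooth and positive there, so it does not help.

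The detour is also unnecessary. For $\theta\equiv1$, which you treated correctly, the convolution is $\int_0^t m(t-\tau)\,d\tau=\int_0^t m(s)\,ds$. So you already have $\int_0^t m(s)\,ds=t^{\alpha}/\Gamma(1+\alpha)$ for every $t\in(0,T]$. Since $m$ is continuous on $(0,T]$ and integrable near $0$ (Lemma~\ref{lem:int}), differentiating in $t$ gives $m(t)=t^{\alpha-1}/\Gamma(\alpha)$ at once. This is exactly how the paper concludes. Your indicator-function approximation would only recover consequences of this single identity. If you want to keep a family of test functions, use polynomials: $I^{\alpha}_{0+}t^k=\frac{k!}{\Gamma(k+1+\alpha)}t^{k+\alpha}$ is explicitly in the uniqueness class, and Titchmarsh's theorem is then not needed.
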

\textit{Proof.}
Since $\mathcal L1=0$, the function $z\equiv1$ is a bounded classical solution of
${}^CD_t z-\mathcal Lz=0$, $z(\cdot,0)=1$, and $1\in H^{\alpha+\lambda}_\nu$ (because $t^\nu$ is H\"{o}lder of order
$\alpha+\lambda\le\nu$). By (ii)--(iii) of Theorem~\ref{thm:EK}, $1=Z_0 1$, which is the first identity.
Similarly $z=t^{\alpha}/\Gamma(1+\alpha)$ solves ${}^CD_t z-\mathcal Lz=1$, $z(\cdot,0)=0$, and lies in the uniqueness class, so
\[
\qquad\qquad\qquad \frac{t^{\alpha}}{\Gamma(1+\alpha)}=K[1](x,t)=\int_0^t h(s,x) d s,\qquad h(s,x):=\int_{\mathbb{R}^N}Y(s,x,\xi) d\xi.
\]
The function $h(\cdot,x)$ is continuous on $(0,T]$ (dominated convergence, using Lemma~\ref{lem:unified} below), therefore
differentiation gives $h(t,x)=t^{\alpha-1}/\Gamma(\alpha)$.

\subsection*{3.2. Dimension-dependent behaviour near the diagonal $x=\xi$}

Formulas \eqref{eq:Z0}, \eqref{eq:Y0} depend on $x-\xi$ only through
$\rho_\xi(x-\xi)$. Put
\begin{equation*}
 \qquad\qquad\qquad\qquad\qquad\qquad r=\rho_\xi(x-\xi)^{1/2}\,t^{-\alpha/2}
   =\bigl|A(\xi)^{-1/2}(x-\xi)\bigr|\,t^{-\alpha/2}.
\end{equation*}
Then $\rho_\xi(x-\xi)^{-N/2}=t^{-N\alpha/2}r^{-N}$ and the argument of the
$H$-functions equals $r^2/4$, so that
\begin{equation*}
\qquad\qquad\qquad\qquad  Z_0=\frac{t^{-N\alpha/2}}{(\det A(\xi))^{1/2}}\,\Phi_N(r),\qquad
  Y_0=\frac{t^{\alpha-1-N\alpha/2}}{(\det A(\xi))^{1/2}}\,\Psi_N(r),
\end{equation*}
where
\begin{equation*}
\Phi_N(r)=\pi^{-N/2}r^{-N}H^{20}_{12}\!\left[\tfrac{r^2}{4}\,\bigg|\,
  \begin{matrix}(1,\alpha)\\ (\tfrac N2,1),\,(1,1)\end{matrix}\right],\qquad
  \Psi_N(r)=\pi^{-N/2}r^{-N}H^{20}_{12}\!\left[\tfrac{r^2}{4}\,\middle|\,
  \begin{matrix}(\alpha,\alpha)\\ (\frac{N}{2},1),\,(1,1)\end{matrix}\right].
\end{equation*}
 By uniform ellipticity, $r\asymp|x-\xi|\,t^{-\alpha/2}$
and $(\det A(\xi))^{-1/2}\asymp1$ uniformly in $\xi$.
By \cite[(3.7)--(3.9)]{EK}, $\Phi_N$ and $\Psi_N$ decay like
$\exp(-\sigma r^{2/(2-\alpha)})$ as $r\to\infty$ (the power factors are absorbed
by decreasing $\sigma$).
Their behavior as $r\to 0$ is governed by the expansions
\cite[(3.10)--(3.17)]{EK}, and it is \emph{different in every  dimension}.
Table~\ref{tab:sing} lists the orders of the singularities.
\begin{table}[ht]
\centering
\renewcommand{\arraystretch}{1.25}
\begin{tabular}{c|ccc|ccc}
\toprule
$N$ & $\Phi_N$ & $|\nabla\Phi_N|$ & $|D^2\Phi_N|$ & $\Psi_N$ & $|\nabla\Psi_N|$ & $|D^2\Psi_N|$\\
\midrule
$1$ & $O(1)$ & $O(1)$ & $O(1)$ & $O(1)$ & $O(1)$ & $O(1)$\\
$2$ & $|\log r|$ & $r^{-1}$ & $r^{-2}$ & $O(1)$ & $O(1)$ & $|\log r|$\\
$3$ & $r^{-1}$ & $r^{-2}$ & $r^{-3}$ & $O(1)$ & $O(1)$ & $r^{-1}$\\
$4$ & $r^{-2}$ & $r^{-3}$ & $r^{-4}$ & $|\log r|$ & $r^{-1}$ & $r^{-2}$\\
$\ge5$ & $r^{2-N}$ & $r^{1-N}$ & $r^{-N}$ & $r^{4-N}$ & $r^{3-N}$ & $r^{2-N}$\\
\midrule
in $L^1(\mathbb{R}^N)$? & yes & yes & no ($N\ge2$) & yes & yes & yes\\
\bottomrule
\end{tabular}
\caption{Behaviour of the normalised kernels as $r=|x-\xi|t^{-\alpha/2}\to0$
(from \cite{EK}, [Propositions 1, 2 and (3.10), (3.14)]).}
\label{tab:sing}
\end{table}

\noindent We provide some comments on Table~\ref{tab:sing}.
\begin{itemize} 
\item $Y$ is two orders less singular than $Z$; it is bounded near the diagonal only for $N\le3$, which is why
the pointwise bounds \cite[(12)--(15)]{DD21} for $N=1$ have no analogue for $N\ge4$. All arguments below use only integral bounds.
\item $D^2_xZ$ is not integrable for $N\ge2$; this is the reason why H\"{o}lder continuity of the data is required for $N\ge2$
(and not for $N=1$), exactly as in the classical parabolic theory.
\item For $N=4$ the entries for $\Psi_4$ are those given by the leading term $z^2\log z$ of the expansion \cite[(3.14)]{EK}:
$Y_0\approx c\,t^{-\alpha-1}\log(t^{-\alpha}|x|^2)$, hence $|D^m_xY_0|\asymp t^{-\alpha-1}|x|^{-| m|}$, $|m|\ge1$.
(The form of \cite[(2.14)--(2.16)]{EK} for $n=4$ is different; only the integrability recorded in the last row matters
below, and it holds in either version.)
\end{itemize}

The corrections $V_Z,V_Y$ obey the dimension-dependent estimates \cite[Theorem (a)]{EK}. Writing each of them in terms of $r$,
one finds that they are of the same form as the bounds for $Z_0,Y_0$, multiplied by an additional factor $t^{\delta_N}$,
$\delta_N>0$; Table~\ref{tab:V} records $\delta_N$ (here $0<\gamma_0<\gamma$ is the auxiliary exponent of \cite{EK}, with
$\gamma_0<\gamma/2$ when $N=4$).

\begin{table}[ht]
\centering
\renewcommand{\arraystretch}{1.25}
\begin{tabular}{c|ccccc}
\toprule
$N$ & $1$ & $2$ & $3$ & $4$ & $\ge5$\\
\midrule
$\delta_N$ for $V_Z$, $|m|\le1$ & $\gamma\alpha/2$ & $\gamma\alpha/2$ & $\gamma\alpha/2$ & $\alpha(2\gamma-3\gamma_0)/2$ & $\gamma\alpha/2$\\
$\delta_N$ for $V_Y$, $|m|\le1$ & $\gamma\alpha/2$ & $\gamma\alpha/2$ & $\gamma\alpha/2$ & $\alpha(3\gamma-5\gamma_0)/4$ & $\gamma\alpha/2$\\
\bottomrule
\end{tabular}
\caption{The gain of the corrections $V_Z$, $V_Y$ over the homogeneous orders $t^{-(N+|m|)\alpha/2}$ and
$t^{\alpha-1-(N+|m|)\alpha/2}$.}
\label{tab:V}
\end{table}

For instance, when  $N\ge5$, \cite[Theorem (a)(v)]{EK} yields
\[
|D^m_xV_Y|\le Ct^{(\gamma_0+\gamma)\alpha/4-1}|x-\xi|^{2-N-|m|+(\gamma-\gamma_0)/2}e^{-\sigma\rho};
\]
substituting $|x-\xi|=t^{\alpha/2}r$ yields the bound
$Ct^{\alpha-1-(N+|m|)\alpha/2}\cdot t^{\gamma\alpha/2}\cdot r^{2-N-|m|+(\gamma-\gamma_0)/2}e^{-\sigma\rho}$.
The other entries are computed in the same way; for $N=4$ the positivity of $\delta_4$ uses $\gamma_0<\gamma/2$.

Since $t^{\delta_N}\le T^{\delta_N}$, Tables~\ref{tab:sing}--\ref{tab:V} can be consolidated into a single statement that holds in all dimensions.
Put
\begin{equation}\label{eq:omega}
\omega(r)=\bigl(1+r^{1-N}\bigr)\exp\bigl(-\sigma r^{2/(2-\alpha)}\bigr),\qquad r>0 .
\end{equation}
Logarithms are absorbed using $|\log r|\le C(1+r^{-1})$ for $r\le1$.

\begin{lemma}[Unified pointwise bounds]\label{lem:unified}
There are $C,\sigma>0$, depending only on $N,\alpha,T$ and the constants in (A1), such that for $0<t\le T$, $x\ne\xi$,
$|m|\le1$,
\begin{gather*}
|D^m_xZ(t,x,\xi)|\le C\,t^{-(N+|m|)\alpha/2}\,\omega\bigl(|x-\xi|t^{-\alpha/2}\bigr),\\
|D^m_xY(t,x,\xi)|\le C\,t^{\alpha-1-(N+|m|)\alpha/2}\,\omega\bigl(|x-\xi|t^{-\alpha/2}\bigr).
\end{gather*}
\end{lemma}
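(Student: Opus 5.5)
The plan is to prove the lemma by splitting $Z=Z_0+V_Z$ and $Y=Y_0+V_Y$ as in Theorem~\ref{thm:EK}(i), bounding the parametrix and the correction separately, and then checking that every singular profile occurring in Tables~\ref{tab:sing}--\ref{tab:V} for $|m|\le1$ is dominated by $\omega$.

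\textbf{Parametrix part.} From the representation $Z_0=(\det A(\xi))^{-1/2}t^{-N\alpha/2}\Phi_N(r)$, with $r=|A(\xi)^{-1/2}(x-\xi)|t^{-\alpha/2}$, the chain rule gives
\[
D^m_xZ_0=(\det A(\xi))^{-1/2}\,t^{-N\alpha/2}\,t^{-|m|\alpha/2}\,\Phi_N'(r)\,\nabla_y|A^{-1/2}y|\cdot(\ldots).
\]
Since $\nabla_y|A^{-1/2}y|$ is bounded uniformly in $\xi$, we get $|D^m_xZ_0|\le Ct^{-(N+|m|)\alpha/2}\,\bigl(|\Phi_N(r)|+|\Phi_N'(r)|\bigr)$. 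The same computation for $Y_0$ and $\Psi_N$ gives the extra factor $t^{\alpha-1}$.

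\textbf{Domination of the profiles.} It remains to show that $|\Phi_N|+|\Phi_N'|$ and $|\Psi_N|+|\Psi_N'|$ are $\le C\omega(r)$, with $r\asymp|x-\xi|t^{-\alpha/2}$; the two-sided equivalence lets us replace $r$ by $|x-\xi|t^{-\alpha/2}$ at the cost of changing $\sigma$ and $C$.
\begin{itemize}
\item For $r\ge1$ the decay $\exp(-\sigma r^{2/(2-\alpha)})$ from \cite[(3.7)--(3.9)]{EK} gives the bound directly, since power factors are absorbed by decreasing $\sigma$.
\item For $r\le1$ we read off the first two columns of each block of Table~\ref{tab:sing}. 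The worst entry among $\Phi_N,\nabla\Phi_N,\Psi_N,\nabla\Psi_N$ is $|\nabla\Phi_N|\sim r^{1-N}$, since for $N\ge2$ the gradient of $\Phi_N$ is exactly $r^{1-N}$. All other entries are less singular, e.g. $r^{2-N}$, $r^{3-N}$, $O(1)$, or $|\log r|\le C(1+r^{-1})\le C(1+r^{1-N})$ for $N\ge2$.
\item For $N=1$ everything is bounded and $\omega\ge e^{-\sigma}$ on $r\le1$.
\end{itemize}
Hence every entry is $\le C(1+r^{1-N})$ on $r\le1$.

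\textbf{Correction part.} Here I would invoke \cite[Theorem (a)]{EK} case by case ($N=1,2,3,4,\ge5$), as in the worked $N\ge5$ example above. Substituting $|x-\xi|=t^{\alpha/2}r$ converts each estimate into
\[
t^{-(N+|m|)\alpha/2}\,t^{\delta_N}\,\chi_N(r)\,e^{-\sigma\rho}
\]
(with the extra $t^{\alpha-1}$ for $V_Y$), where $\delta_N>0$ is from Table~\ref{tab:V} and $\chi_N$ is a power singularity no worse than the corresponding one for $Z_0,Y_0$. In the $N\ge5$ example, $\chi_N(r)=r^{2-N-|m|+(\gamma-\gamma_0)/2}$, which is $\le r^{1-N}$ for $r\le1$ and $|m|\le1$. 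Then $t^{\delta_N}\le T^{\delta_N}$, and the exponential factor $e^{-\sigma\rho}$ (with $\rho\asymp r^{2/(2-\alpha)}$) gives the decay.

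The main obstacle is this bookkeeping in the correction step. One has to check that for each $N$ the powers of $|x-\xi|$ in \cite{EK}'s estimates for $V_Z$ and $V_Y$ convert into exponents $\ge1-N$ in $r$ after the substitution. This matters especially for $N=4$, where the logarithmic factors and the exponent $\gamma_0$ appear; there I would use $\gamma_0<\gamma/2$ to keep $\delta_4>0$, and absorb the small positive powers $r^{\epsilon}$ and any leftover logarithms into $1+r^{1-N}$.
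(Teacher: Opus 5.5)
Your proposal is correct and follows essentially the paper's argument: split $Z=Z_0+V_Z$, $Y=Y_0+V_Y$; on $r\le1$ dominate the entries for $\Phi_N,\nabla\Phi_N,\Psi_N,\nabla\Psi_N$ by $1+r^{1-N}$; on $r\ge1$ use the exponential decay; and for the corrections use $t^{\delta_N}\le T^{\delta_N}$ together with the fact that their powers of $r$ have order $\ge 1-N$. One small correction: the $r$-singularity of $V_Y$ need not be ``no worse'' than that of $Y_0$ (for $N\ge5$ it is $r^{2-N-|m|+(\gamma-\gamma_0)/2}$ versus $r^{4-N-|m|}$), so the criterion that actually carries the argument is the one you check in the end, exponent $\ge 1-N$, which is exactly what the paper uses.
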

\textit{Proof.}
For $r\ge1$ this is \cite[(2.5),(2.11)]{EK} together with Table~\ref{tab:V}. For $r\le1$ every singular entry of
Table~\ref{tab:sing} in the columns $\Phi_N,\nabla\Phi_N,\Psi_N,\nabla\Psi_N$ is $O(1+r^{1-N})$, and the same holds for the powers of $r$ in the
estimates of $V_Z,V_Y$ (they are of order $\ge 1-N$); on $r\le1$ the exponential factor is bounded below by $e^{-\sigma}$.

Because $\int_{\mathbb{R}^N}\omega(|y|)|y|^{\beta} d y<\infty$ for $\beta>-1$, the substitution $\xi=x+t^{\alpha/2}y$ gives:

\begin{lemma}[Integral bounds]\label{lem:int}
For $\beta\in[0,1]$, $|m|\le1$, $0<t\le T$ and every $x\in\mathbb{R}^N$,
\[
\int_{\mathbb{R}^N}|D^m_xZ(t,x,\xi)|\,|x-\xi|^{\beta} d\xi\le C\,t^{\alpha(\beta-|m|)/2},\qquad
\int_{\mathbb{R}^N}|D^m_xY(t,x,\xi)|\,|x-\xi|^{\beta} d\xi\le C\,t^{\alpha-1+\alpha(\beta-|m|)/2}.
\]
\end{lemma}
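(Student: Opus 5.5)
The plan is to insert the unified pointwise bounds of Lemma~\ref{lem:unified} into the integrals and rescale to the natural parabolic variable. Fix $x$, $t$, $\beta\in[0,1]$ and $|m|\le1$. Lemma~\ref{lem:unified} gives
\[
\int_{\mathbb{R}^N}|D^m_xZ(t,x,\xi)|\,|x-\xi|^{\beta}d\xi\le C\,t^{-(N+|m|)\alpha/2}\int_{\mathbb{R}^N}\omega\bigl(|x-\xi|t^{-\alpha/2}\bigr)|x-\xi|^{\beta}d\xi .
\]
Substituting $\xi=x+t^{\alpha/2}y$ gives $d\xi=t^{N\alpha/2}dy$ and $|x-\xi|^\beta=t^{\alpha\beta/2}|y|^\beta$. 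The right-hand side therefore becomes
\[
C\,t^{-(N+|m|)\alpha/2}\,t^{N\alpha/2}\,t^{\alpha\beta/2}\int_{\mathbb{R}^N}\omega(|y|)|y|^{\beta}dy=C'\,t^{\alpha(\beta-|m|)/2}.
\]
The bound for $Y$ is obtained identically. The only change is the extra prefactor $t^{\alpha-1}$ in Lemma~\ref{lem:unified}, which carries through unchanged and yields $t^{\alpha-1+\alpha(\beta-|m|)/2}$.

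It remains to check that $I_\beta:=\int_{\mathbb{R}^N}\omega(|y|)|y|^\beta dy$ is finite for $\beta\in[0,1]$, with a constant depending only on $N,\alpha,\sigma$. I will use polar coordinates, so that $I_\beta=|S^{N-1}|\int_0^\infty(1+r^{1-N})e^{-\sigma r^{2/(2-\alpha)}}r^{\beta+N-1}dr$ (for $N=1$, read $|S^0|=2$).

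Near $r=0$, the integrand is bounded by $C(r^{\beta+N-1}+r^{\beta})$. This is integrable on $(0,1)$ since $\beta>-1$; this is exactly where the critical singularity $r^{1-N}$ appears, and it is harmless. Near infinity, the stretched exponential $e^{-\sigma r^{2/(2-\alpha)}}$ has positive exponent $2/(2-\alpha)>0$ and so dominates any power of $r$. Hence $I_\beta<\infty$, and $\sup_{\beta\in[0,1]}I_\beta<\infty$ by splitting at $r=1$ and bounding $r^\beta\le1$ on $(0,1)$ and $r^\beta\le r$ on $(1,\infty)$.

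There is no real obstacle beyond Lemma~\ref{lem:unified}. The only point needing care is that the constants $C,\sigma$ there are uniform in $\xi$, which is what permits pulling them out before integrating in $\xi$, and that the exponent of the singularity in $\omega$ is at most $N-1$, which keeps it integrable in $\mathbb{R}^N$ even for $\beta=0$. I note for later use that the bound is uniform in $x$, since the substitution removes all $x$-dependence.
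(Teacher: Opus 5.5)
Your proposal is correct and is essentially the paper's argument: dominate the integrands by the profile $\omega$ from Lemma~\ref{lem:unified}, substitute $\xi=x+t^{\alpha/2}y$, and use that $\int_{\mathbb{R}^N}\omega(|y|)|y|^{\beta}\,dy<\infty$ for $\beta>-1$. You additionally write out the polar-coordinate integrability check and the uniformity of the constant in $\beta\in[0,1]$, which the paper leaves implicit.
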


These estimates are \emph{the same in all dimensions}; dimension enters only through the constants. Note that for $|m|=1$,
$\beta=0$ the $Y$-bound is $Ct^{\alpha/2-1}$, which is integrable at $t=0$, while $\int|D^2_xY| d\xi\sim t^{-1}$ is not.

\subsection*{3.3. The operators $Z_0$ and $K$}

\begin{lemma}\label{lem:ops}
Let $\ell\in(0,1)$. There exist  constants $c_1,c_2,c_3,c_4$ (depending on $N,\alpha,\ell,T$ and (A1) such that: \\
(a)$|Z_0\psi(x,t)|\le|\psi|_0$ and $[Z_0\psi(\cdot,t)]_{\ell}\le c_1[\psi]_{\ell}$ for $\psi\in H^{\ell}(\mathbb{R}^N)$, $t\in(0,T]$;\\
(b) for bounded continuous $G$, $|K[G](x,t)|\le(I_{0+}^{\alpha} \|G\|)(t)$ and
\[
\qquad\qquad[K[G](\cdot,t)]_{\ell}\le c_2\int_0^t(t-\tau)^{\alpha(1-\ell/2)-1}\|G\|(\tau) d\tau\le
\frac{c_2\,t^{\alpha(1-\ell/2)}}{\alpha(1-\ell/2)}\sup_{\tau\le t} \|G\|(\tau);
\]\\
(c) if $\psi$ is Lipschitz with constant $\Lambda$, then $Z_0\psi(\cdot,t)\in C^1$ and $|\nabla_xZ_0\psi|\le c_3\Lambda$;
for bounded continuous $G$, $K[G](\cdot,t)\in C^1$ and $|\nabla_x K[G](x,t)|\le c_4\int_0^t(t-\tau)^{\alpha/2-1} \|G\|(\tau) d\tau$. 
\end{lemma}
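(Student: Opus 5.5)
The plan is to get everything from the positivity of the kernels (Theorem~\ref{thm:EK}(iv)), the normalisation identities of Lemma~\ref{lem:norm}, and the integral bounds of Lemma~\ref{lem:int}.

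\emph{Part (a).} The sup bound follows at once from $Z\ge0$ and $\int Z\,d\xi=1$. For the H\"older seminorm I would use the normalisation to write
\[
Z_0\psi(x,t)-Z_0\psi(\tilde x,t)=\int Z(t,x,\xi)\bigl(\psi(\xi)-\psi(x)\bigr)d\xi-\int Z(t,\tilde x,\xi)\bigl(\psi(\xi)-\psi(\tilde x)\bigr)d\xi+\psi(x)-\psi(\tilde x).
\]
Put $d=|x-\tilde x|$ and split into two cases.
\begin{itemize}
\item If $d^2\ge t^{\alpha}$, each integral is bounded by $[\psi]_\ell\int|Z||x-\xi|^\ell d\xi\le C[\psi]_\ell t^{\alpha\ell/2}\le C[\psi]_\ell d^\ell$, by Lemma~\ref{lem:int} with $\beta=\ell$, $m=0$.
\item If $d^2<t^{\alpha}$, I would instead use the mean value theorem along the segment from $\tilde x$ to $x$. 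Since $\int\nabla_xZ\,d\xi=0$ (differentiate the normalisation, justified by Lemma~\ref{lem:unified}), one has
\[
\nabla_x Z_0\psi(y,t)=\int\nabla_xZ(t,y,\xi)\bigl(\psi(\xi)-\psi(y)\bigr)d\xi .
\]
Lemma~\ref{lem:int} with $|m|=1$, $\beta=\ell$ then bounds this by $C[\psi]_\ell t^{\alpha(\ell-1)/2}$. Hence the difference is at most $C[\psi]_\ell\, d\, t^{\alpha(\ell-1)/2}\le C[\psi]_\ell d^\ell$, because $d/t^{\alpha/2}<1$.
\end{itemize}

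\emph{Part (b).} The sup bound is immediate from $Y\ge0$ and $\int Y\,d\xi=\tau^{\alpha-1}/\Gamma(\alpha)$, which gives exactly $I^\alpha_{0+}\|G\|$. For the seminorm I would work at fixed $\tau$ and set $s=t-\tau$. Two estimates are available for the inner integral difference:
\begin{itemize}
\item crude: $2\|G\|(\tau)s^{\alpha-1}/\Gamma(\alpha)$;
\item gradient: $\int|\nabla_xY|\,d\xi\le Cs^{\alpha/2-1}$ (Lemma~\ref{lem:int}, $\beta=0$), which gives $Cd\,s^{\alpha/2-1}\|G\|(\tau)$.
\end{itemize}
Interpolating as $\min(a,b)\le a^{1-\ell}b^\ell$ yields $C d^\ell s^{\alpha-1-\alpha\ell/2}\|G\|(\tau)$. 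Integrating in $\tau$ gives the stated kernel $(t-\tau)^{\alpha(1-\ell/2)-1}$, and the second inequality is elementary. No H\"older continuity of $G$ is needed here, because only first derivatives of $Y$ appear.

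\emph{Part (c).} For the gradient of $Z_0\psi$, the identity $\int\nabla_xZ\,d\xi=0$ gives $\nabla Z_0\psi=\int\nabla_xZ(\psi(\xi)-\psi(x))\,d\xi$. This is bounded by $\Lambda\int|\nabla_xZ||x-\xi|\,d\xi\le C\Lambda$, using $\beta=1$, $|m|=1$, so the power of $t$ is $t^0$. For $K[G]$ the bound follows from $\int|\nabla_xY|\,d\xi\le Cs^{\alpha/2-1}$. Differentiation under the integral and continuity of the derivative come from dominated convergence via Lemma~\ref{lem:unified}: the majorant $t^{-(N+1)\alpha/2}\omega$ is integrable in $\xi$ locally uniformly in $x$, and the time singularity $s^{\alpha/2-1}$ is integrable.

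\emph{Main obstacle.} The delicate step is justifying $\int\nabla_xZ\,d\xi=0$, and more generally differentiating under the integral near the diagonal. The majorant $\omega(|x-\xi|t^{-\alpha/2})$ depends on $x$, so I would take difference quotients and use that $\sup_{|y-x|\le h}\omega(|y-\xi|t^{-\alpha/2})$ is still dominated by an integrable function of $\xi$ when $t$ is fixed and positive, since $r^{1-N}$ is locally integrable. If this causes trouble, the fallback for the $t>0$ bounds is to use the representation of $Z_0\psi$ on differences directly.
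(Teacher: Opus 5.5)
Your estimates are correct and the architecture is the paper's: positivity plus the normalisation of Lemma~\ref{lem:norm} for the sup bounds, and the case split at $|x-\tilde x|=t^{\alpha/2}$ in (a). In (b) you use the same interpolation $\min\{a,b\}\le a^{1-\ell}b^{\ell}$ between $2s^{\alpha-1}/\Gamma(\alpha)$ and $Chs^{\alpha/2-1}$. In (c) you use the same subtraction $\int\nabla_xZ\,d\xi=0$ with $\beta=1$.

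The only real difference is the case $h=|x-\tilde x|<t^{\alpha/2}$ of (a). You bound $\nabla_xZ_0\psi(y,t)$ pointwise, subtracting $\psi(y)$ at the moving point, and then apply the mean value theorem. The paper instead writes $Z_0\psi(x)-Z_0\psi(\tilde x)=\int[Z(t,x,\xi)-Z(t,\tilde x,\xi)][\psi(\xi)-\psi(x)]\,d\xi$. It then integrates $\nabla_xZ(t,x_\theta,\xi)$ along the segment for a.e.\ $\xi$ and uses Fubini. The fixed base point costs it an extra term $h^{\ell}t^{-\alpha/2}$, handled by the $\beta=0$ bound. Your route gives the smoothing estimate $|\nabla_xZ_0\psi(\cdot,t)|\le C[\psi]_\ell t^{-\alpha(1-\ell)/2}$ as a by-product. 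However, it makes (a) depend on the differentiability of $Z_0\psi$ and on $\int\nabla_xZ\,d\xi=0$. The paper's version of (a) needs only the normalisation at the two points $x,\tilde x$ and Fubini.

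That dependence matters, because the justification you give for differentiating under the integral sign is false for $N\ge2$. Indeed, $\sup_{|y-x|\le h}\omega(|y-\xi|t^{-\alpha/2})=+\infty$ for every $\xi$ with $|\xi-x|\le h$ (let $y\to\xi$), so this is not an integrable majorant. Local integrability of $r^{1-N}$ is lost once you take the supremum over the position of the singularity.

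The correct device is the one used in Lemma~\ref{lem:meas}(b): translate $\xi=y+z$, so the singularity sits at $z=0$. Then $Ct^{-(N+1)\alpha/2}\omega(|z|t^{-\alpha/2})$, respectively $C\|G\|_0\,s^{\alpha-1-(N+1)\alpha/2}\omega(|z|s^{-\alpha/2})$ for $K[G]$, is a majorant independent of $y$. Dominated convergence then gives continuity of $y\mapsto\int\nabla_xZ(t,y,\xi)\psi(\xi)\,d\xi$ for bounded continuous $\psi$, and of the corresponding integral for $K[G]$. Combine this with the identity $Z(t,y+k,\xi)-Z(t,y,\xi)=\int_0^1\nabla_xZ(t,y+\theta k,\xi)\cdot k\,d\theta$, valid for a.e.\ $\xi$, and with Fubini. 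This yields differentiability of $Z_0\psi$ and $K[G]$, and, taking $\psi\equiv1$, the identity $\int\nabla_xZ\,d\xi=0$. With this replacement, or by adopting the paper's form of (a) and using the translation argument only in (c), your proof is complete.
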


\textit{Proof.}  (a) The first bound follows from $Z\ge0$ and \eqref{eq:norm}. Let $h=|x-\tilde x|>0$. If $h\ge t^{\alpha/2}$, write
$Z_0\psi(x)-Z_0\psi(\tilde x)=\int Z(t,x,\xi)(\psi(\xi)-\psi(x)) d\xi-\int Z(t,\tilde x,\xi)(\psi(\xi)-\psi(\tilde x)) d\xi+\psi(x)-\psi(\tilde x)$
(by \eqref{eq:norm}); Lemma~\ref{lem:int} with $m=0$, $\beta=\ell$ bounds this by $[\psi]_{\ell}(2Ct^{\alpha\ell/2}+h^{\ell})\le(2C+1) [\psi]_{\ell} h^{\ell}$.
If $h<t^{\alpha/2}$, again by \eqref{eq:norm},
\[
\qquad\qquad\qquad Z_0\psi(x)-Z_0\psi(\tilde x)=\int\bigl[Z(t,x,\xi)-Z(t,\tilde x,\xi)\bigr]\bigl[\psi(\xi)-\psi(x)\bigr] d\xi .
\]
For almost every $\xi$ the segment $x_\theta=x+\theta(\tilde x-x)$, $\theta\in[0,1]$, avoids $\xi$, so
$Z(t,x,\xi)-Z(t,\tilde x,\xi)=-\int_0^1\nabla_xZ(t,x_\theta,\xi)\cdot(\tilde x-x) d\theta$. By
$|\psi(\xi)-\psi(x)|\le [\psi]_{\ell}(|\xi-x_\theta|^\ell+h^\ell)$, Fubini's theorem and Lemma~\ref{lem:int} ($m=1$, $\beta=\ell$ and $\beta=0$), we have
\[
|Z_0\psi(x)-Z_0\psi(\tilde x)|\le C[\psi]_{\ell}\,h\bigl(t^{\alpha(\ell-1)/2}+h^{\ell}t^{-\alpha/2}\bigr)
=C[\psi]_{\ell}\,h^{\ell}\bigl((ht^{-\alpha/2})^{1-\ell}+ht^{-\alpha/2}\bigr)\le2C[\psi]_{\ell} h^{\ell}.
\]
(b) The first bound follows from $Y\ge0$ and \eqref{eq:norm}. For $s>0$, by the same segment argument and Lemma~\ref{lem:int},
$\int|Y(s,x,\xi)-Y(s,\tilde x,\xi)| d\xi\le\min\{2s^{\alpha-1}/\Gamma(\alpha),\ Chs^{\alpha/2-1}\}\le c\,h^{\ell}s^{\alpha-1-\alpha\ell/2}$
(interpolate $\min\{a,b\}\le a^{1-\ell}b^{\ell}$). Integrating against $\|G\|(\tau)$ with $s=t-\tau$ gives the claim.

(c) By Lemma~\ref{lem:int} the $x$-differentiated integrals converge uniformly on compact sets, so differentiation under the
integral sign is legitimate. Differentiating the first identity in \eqref{eq:norm} gives $\int\nabla_xZ(t,x,\xi) d\xi=0$, hence
$\nabla_xZ_0\psi(x,t)=\int\nabla_xZ(t,x,\xi)[\psi(\xi)-\psi(x)] d\xi$, which is bounded by
$\Lambda\int|\nabla_xZ||\xi-x| d\xi\le C\Lambda$. The bound for $\nabla_x K[G]$ is verified by  Lemma~\ref{lem:int} with $m=1,\beta=0$.

We will make repeated use of the following fractional Gronwall inequality.

\begin{lemma}\label{lem:gronwall}
Let $\mu\ge0$ be bounded and measurable on $[0,T]$, $A,B\ge0$, and $\mu(t)\le A+B(I_{0+}^{\alpha}\mu)(t)$ for $t\in[0,T]$. Then
$\mu(t)\le A\,E_{\alpha}(Bt^{\alpha})$, where $E_{\alpha}(s)=\sum_{k\ge0}s^k/\Gamma(1+k\alpha)$ is the Mittag-Leffler function \cite[pp. 40--45]{KST}.
\end{lemma}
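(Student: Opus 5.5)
We prove the fractional Gronwall inequality of Lemma~\ref{lem:gronwall} by iterating the inequality and using the semigroup property of the Riemann--Liouville integral. Write $\mathcal J=B\,I^{\alpha}_{0+}$. This operator is linear and positivity preserving, since its kernel $(t-\tau)^{\alpha-1}/\Gamma(\alpha)$ is nonnegative. The hypothesis reads $\mu\le A+\mathcal J\mu$ pointwise on $[0,T]$. Applying $\mathcal J$ and substituting back $k-1$ times gives
\[
\mu(t)\le A\sum_{j=0}^{k-1}(\mathcal J^{j}1)(t)+(\mathcal J^{k}\mu)(t),\qquad k\ge1 .
\]
By the semigroup law $I^{\alpha}_{0+}I^{\beta}_{0+}=I^{\alpha+\beta}_{0+}$ (a Beta-integral computation), we have $\mathcal J^{j}=B^{j}I^{j\alpha}_{0+}$. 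Moreover $I^{j\alpha}_{0+}1=t^{j\alpha}/\Gamma(1+j\alpha)$. So the finite sum is exactly the $k$-th partial sum of $A\,E_\alpha(Bt^\alpha)$.

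Next we show the remainder vanishes. Since $\mu$ is bounded and measurable, $0\le\mu\le M$. Positivity then gives
\[
0\le(\mathcal J^{k}\mu)(t)\le M B^{k}\frac{t^{k\alpha}}{\Gamma(1+k\alpha)}\le M\frac{(BT^{\alpha})^{k}}{\Gamma(1+k\alpha)} .
\]
This is the general term of the convergent series $E_\alpha(BT^\alpha)$; the series converges because $\Gamma(1+k\alpha)$ grows superexponentially, e.g.\ by the ratio test or Stirling's formula. Hence the remainder tends to $0$ uniformly on $[0,T]$ as $k\to\infty$.

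Letting $k\to\infty$ yields $\mu(t)\le A\sum_{j\ge0}(Bt^\alpha)^j/\Gamma(1+j\alpha)=A\,E_\alpha(Bt^\alpha)$. The case $B=0$ is trivial.

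There is no real obstacle. The only points needing care are:
\begin{itemize}
\item the positivity of $\mathcal J$, which lets us iterate the inequality and not merely an equation;
\item the measurability and boundedness of $\mu$, which make every $I^{j\alpha}_{0+}\mu$ well defined and justify the Fubini step behind the semigroup law.
\end{itemize}
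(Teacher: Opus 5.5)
Your proposal is correct and follows the paper's proof: you iterate the inequality using the positivity of $I^{\alpha}_{0+}$ and the identity $I^{k\alpha}_{0+}1=t^{k\alpha}/\Gamma(1+k\alpha)$, then let the remainder $B^{K}\sup\mu\, t^{K\alpha}/\Gamma(1+K\alpha)$ tend to zero. Your explicit mention of the semigroup law (via Tonelli) spells out a step the paper leaves implicit.
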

\textit{Proof.}
Since $I_{0+}^{\alpha}$ is positive and $I^{k\alpha}_{0+}1=t^{k\alpha}/\Gamma(1+k\alpha)$, iterating $K$ times gives
\[\qquad\qquad\qquad\qquad\qquad
\mu(t)\le A\sum_{k<K}\frac{B^kt^{k\alpha}}{\Gamma(1+k\alpha)}+B^K\sup\mu\,\frac{t^{K\alpha}}{\Gamma(1+K\alpha)},
\]
and the last term tends to $0$ as $K\to\infty$.

\section*{4. The direct problem}\label{sec:direct}

Fix $N\ge1$, an operator $\mathcal L$ satisfying (A1) in $\mathbb{R}^N$ and $\ell\in(0,1)$. By Theorem~\ref{thm:EK}(ii), any solution of
\eqref{eq:P} must satisfy the Volterra-type integral equation obtained by transferring the term $pz$ to the right-hand side:
\begin{equation}\label{eq:IE}
\qquad\qquad\qquad\qquad\qquad z=z_0-K[pz],\qquad z_0:=Z_0\psi+K[F].
\end{equation}
Put
\[
\qquad\qquad\qquad P:=\|p\|_0,\qquad \lambda_0:=|\psi|_0+\frac{T^{\alpha}}{\Gamma(1+\alpha)} \|F\|_0,\qquad M:=\lambda_0\,E_{\alpha}(PT^{\alpha}).
\]

\begin{theorem}[Solvability of the direct problem]\label{thm:direct}
Let $\psi\in H^{\ell}(\mathbb{R}^N)$, $F\in C^{\ell,0}_b(\mathbb{R}^N\times[0,T])$, $p\in C^{\ell,0}_b(\mathbb{R}^N\times[0,T])$. Then:\\
(i) The integral equation \eqref{eq:IE} has a unique solution $z\in C_b(\mathbb{R}^N\times[0,T])$. It is the sum of the uniformly convergent series
$z=\sum_{k\ge0}z_k$, $z_k=-K[pz_{k-1}]$, and
\begin{equation}\label{eq:supbound}
\qquad\qquad\qquad\qquad |z_k(x,t)|\le\lambda_0\frac{P^k t^{k\alpha}}{\Gamma(1+k\alpha)},\qquad
|z(x,t)|\le\lambda_0\,E_{\alpha}(Pt^{\alpha})\le M .
\end{equation}\\
(ii) $z\in C^{\ell,0}_b(\mathbb{R}^N\times[0,T])$ and
\begin{equation}\label{eq:holderbound}
\qquad\qquad\qquad\qquad \sup_{t\le T}[z(\cdot,t)]_{\ell}\le c_1 [\psi]_{\ell}+\frac{c_2T^{\alpha(1-\ell/2)}}{\alpha(1-\ell/2)}\bigl(\|F\|_0+PM\bigr).
\end{equation}\\
(iii) $z$ is a unique regular solution of \eqref{eq:P}.\\
(iv) If, in addition, $\psi$ is Lipschitz with constant $\Lambda$, then $\nabla_xz$ is bounded:
$|\nabla_xz|\le c_3\Lambda+\frac{2c_4}{\alpha}T^{\alpha/2}(\|F\|_0+PM)$. 
\end{theorem}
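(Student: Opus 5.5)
The plan is to treat \eqref{eq:IE} as a linear Volterra equation and solve it by Picard iteration. The sup-norm part uses only the positivity and normalisation of the kernels. The H\"older part comes from Lemma~\ref{lem:ops}. Regularity and uniqueness are then inherited from Theorem~\ref{thm:EK}(ii),(iii).

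\emph{Part (i).} Set $z_0=Z_0\psi+K[F]$. By Lemma~\ref{lem:ops}(a),(b) and $I^\alpha_{0+}1=t^\alpha/\Gamma(1+\alpha)$,
\[
|z_0|\le|\psi|_0+\frac{t^\alpha}{\Gamma(1+\alpha)}\|F\|_0\le\lambda_0 .
\]
Inductively, $|z_k|\le (I^\alpha_{0+}(P\|z_{k-1}\|))(t)$. Using $I^\alpha_{0+}t^{(k-1)\alpha}/\Gamma(1+(k-1)\alpha)=t^{k\alpha}/\Gamma(1+k\alpha)$, this gives the first bound in \eqref{eq:supbound}. Summing, the series converges uniformly with sum bounded by $\lambda_0E_\alpha(Pt^\alpha)$.

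Continuity of each $z_k$ on $\mathbb{R}^N\times[0,T]$ comes from Theorem~\ref{thm:EK}(ii), since $pz_{k-1}$ will be H\"older in $x$ (see (ii)); alternatively it can be checked directly from Lemma~\ref{lem:int} and dominated convergence. The sum solves \eqref{eq:IE} because $K$ is bounded in sup-norm and commutes with the uniformly convergent sum. For uniqueness in $C_b$, the difference $w$ of two solutions satisfies $\|w\|(t)\le P\,I^\alpha_{0+}\|w\|(t)$. Lemma~\ref{lem:gronwall} with $A=0$ then gives $w=0$.

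\emph{Part (ii).} Apply $[\cdot]_\ell$ to $z=Z_0\psi+K[F-pz]$ and use Lemma~\ref{lem:ops}(a),(b) with $\|F-pz\|(\tau)\le\|F\|_0+PM$. This gives \eqref{eq:holderbound} immediately. The bound uses only $\|p\|_0$, not the H\"older norm of $p$.

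\emph{Parts (iii) and (iv).} Since $z\in C^{\ell,0}_b$ and $p\in C^{\ell,0}_b$, the product $G:=F-pz$ is bounded, continuous and uniformly H\"older in $x$, because
\[
[pz]_\ell\le\|p\|_0[z]_\ell+[p]_\ell\|z\|_0 .
\]
By Theorem~\ref{thm:EK}(ii), $Z_0\psi+K[G]$, which equals $z$, is a classical solution of ${}^CD_tz-\mathcal Lz=G$ lying in $H^{\alpha+\lambda}_\nu$. It therefore belongs to $\mathcal U_T(\mathbb{R}^N)$ and solves \eqref{eq:P}.

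For uniqueness among regular solutions, any regular solution $\tilde z$ has $\tilde G=F-p\tilde z$ bounded and H\"older in $x$, by condition (a) of Definition~\ref{def:reg}. Then $\tilde z$ and $Z_0\psi+K[\tilde G]$ both solve the same Cauchy problem in the uniqueness class of Theorem~\ref{thm:EK}(iii), so $\tilde z$ satisfies \eqref{eq:IE}. By (i), $\tilde z=z$. For $N=1$ local H\"older continuity is not needed in Theorem~\ref{thm:EK}(ii), which is why condition (a) is dropped there.

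Part (iv) follows from Lemma~\ref{lem:ops}(c) applied to $z=Z_0\psi+K[G]$, using
\[
\int_0^t(t-\tau)^{\alpha/2-1}\,d\tau\le\frac{2}{\alpha}T^{\alpha/2}.
\]

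The main obstacle is the reduction step in (iii): a regular solution must satisfy \eqref{eq:IE}. This requires checking that $p\tilde z$ meets the hypotheses of Theorem~\ref{thm:EK}(ii), which is where condition (a) of Definition~\ref{def:reg} is used, and that $\nu$ in \eqref{eq:lamnu} lies in the range $\nu<\lambda+1$ of Theorem~\ref{thm:EK}(iii). Both appear to hold directly.
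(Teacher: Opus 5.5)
Your proposal is correct and follows the paper's proof essentially step for step. It uses Picard iteration with the positivity/normalisation bounds of Lemma~\ref{lem:ops} and Lemma~\ref{lem:gronwall} for (i), Lemma~\ref{lem:ops}(a),(b),(c) applied to $z=Z_0\psi+K[F-pz]$ for (ii) and (iv), and Theorem~\ref{thm:EK}(ii),(iii) to pass between regular solutions of \eqref{eq:P} and solutions of \eqref{eq:IE} in (iii).

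The only inaccuracy is your aside on $N=1$. As stated, Theorem~\ref{thm:EK}(ii) relaxes the H\"older requirement only for the initial datum $\psi$, not for the right-hand side $G$, so dropping condition (a) of Definition~\ref{def:reg} when $N=1$ does not by itself justify applying that theorem to $\tilde G=F-p\tilde z$; the paper's proof, which cites Definition~\ref{def:reg}(a) at this point, passes over the same issue.
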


\textit{Proof.} (i) By Lemma~\ref{lem:ops}(a),(b), $|z_0|\le|\psi|_0+(I_{0+}^{\alpha}\|F\|)(t)\le\lambda_0$, and
$\|z_k\|(t)\le P(I_{0+}^{\alpha}\|z_{k-1}\|)(t)$. Induction with $I_{0+}^{\alpha} t^{k\alpha}=\frac{\Gamma(1+k\alpha)}{\Gamma(1+(k+1)\alpha)}t^{(k+1)\alpha}$
gives the first bound in \eqref{eq:supbound}. Hence the series is majorised by $\lambda_0\sum_k(PT^\alpha)^k/\Gamma(1+k\alpha)=M$
and converges uniformly; each $z_k$ is bounded and continuous ($K$ maps $C_b(\mathbb{R}^N\times[0,T])$ into itself: continuity for
$t>0$ follows by dominated convergence with the majorant of Lemma~\ref{lem:unified}, and $|K[G]|\le \|G\|_0t^\alpha/\Gamma(1+\alpha)$
gives continuity at $t=0$; $Z_0\psi$ is continuous by Theorem~\ref{thm:EK}(ii)). Summing the recursion shows that $z$ solves
\eqref{eq:IE}. If $d\in C_b$ solves $d=-K[pd]$, then $\|d\|(t)\le P(I_{0+}^{\alpha} \|d\|)(t)$, and Lemma~\ref{lem:gronwall} with $A=0$
yields $d=0$.

(ii) Apply Lemma~\ref{lem:ops}(a),(b) to \eqref{eq:IE}, $z=Z_0\psi+K[F-pz]$, and use $\|F-pz\|(\tau)\le\|F\|_0+PM$.

(iii) Put $G=F-pz$. By (ii), $G$ is bounded, continuous and H\"{o}lder continuous in $x$ uniformly in $t$
($[pz]_{\ell}\le | p_0|[z]_{\ell}+[p]_{\ell}|z|_0$). Since $z=Z_0\psi+K[G]$, Theorem~\ref{thm:EK}(ii) shows that $z$ is a classical
solution of ${}^CD_t z-\mathcal Lz=G=F-pz$, $z(\cdot,0)=\psi$, with $z(x,\cdot)\in H^{\alpha+\lambda}_\nu$; together with (ii),
$z\in \mathcal{U}_T(\mathbb{R}^N)$ is a regular solution of \eqref{eq:P}.

Let $\tilde z$ be another regular solution. Then $\tilde G=F-p\tilde z$ is bounded, continuous and H\"{o}lder in $x$ uniformly in $t$
(Definition~\ref{def:reg}(a)), so $Z_0\psi+K[\tilde G]$ is a classical solution of ${}^CD_t\cdot-\mathcal L\cdot=\tilde G$ with
initial value $\psi$ in the class $H^{\alpha+\lambda}_\nu$, and so is $\tilde z$. By Theorem~\ref{thm:EK}(iii),
$\tilde z=Z_0\psi+K[F]-K[p\tilde z]$, i.e.\ $\tilde z$ solves \eqref{eq:IE}, and $\tilde z=z$ by (i).

(iv) Lemma~\ref{lem:ops}(c) applied to $z=Z_0\psi+K[F-pz]$.

\begin{remark}
For $N=1$ the H\"{o}lder continuity of $\psi,F,p$ in $x$ is needed only for (ii); existence, uniqueness and \eqref{eq:supbound} hold for
bounded continuous data. This is consistent with \cite{DD21}, where $q=q(t)$ is only continuous. For $N\ge2$ H\"{o}lder continuity is
indispensable, because $D^2_xZ$ is not integrable (Table~\ref{tab:sing}).
\end{remark}

\begin{theorem}[Stability for the direct problem]\label{thm:stab}
Let $z$ and $\tilde z$ be the regular solutions of \eqref{eq:P} with data $(\psi,F,p)$ and $(\tilde\psi,\tilde F,\tilde p)$ satisfying the
hypotheses of Theorem~\ref{thm:direct}, and let $M$ be the constant of $z$ as in (\ref{eq:supbound}), $\tilde P=\|\tilde p\|_0$. Then for $t\in[0,T],$ we have
\begin{equation}\label{eq:stab0}
 \qquad\qquad\qquad \|z-\tilde z\|(t) \le\Bigl[|\psi-\tilde\psi|_0+\frac{T^{\alpha}}{\Gamma(1+\alpha)}\bigl(\|F-\tilde F\|_0+M\|p-\tilde p\|_0\bigr)\Bigr]E_{\alpha}(\tilde Pt^{\alpha}),   
\end{equation}
\begin{align}
\qquad\qquad [(z-\tilde z)(\cdot,t)]_{\ell}&\le c_1 [\psi-\tilde\psi]_{\ell}+\frac{c_2T^{\alpha(1-\ell/2)}}{\alpha(1-\ell/2)}\bigl(\|F-\tilde F\|_0+M\|p-\tilde p\|_0+\tilde P\|z-\tilde z\|_0\bigr).\label{eq:stabl}
\end{align}
Consequently $[z-\tilde z]_\ell\le C_s\bigl(|\psi-\tilde\psi|_\ell+\|F-\tilde F\|_0+\|p-\tilde p\|_0\bigr)$ with $C_s$ depending only on
$N,\alpha,\ell,T$, (A1), $\lambda_0$, $P$ and $\tilde P$. In particular the regular solution depends continuously on the data, and on $p$
only through its \emph{sup-norm}.
\end{theorem}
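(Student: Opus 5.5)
By Theorem~\ref{thm:direct}(iii), both $z$ and $\tilde z$ are the unique solutions of the integral equation \eqref{eq:IE} with their respective data. The plan is to subtract the two integral equations and run the same estimates as in the proof of Theorem~\ref{thm:direct}.

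Put $w=z-\tilde z$. Since $z=Z_0\psi+K[F-pz]$ and $\tilde z=Z_0\tilde\psi+K[\tilde F-\tilde p\tilde z]$, linearity of $Z_0$ and $K$ gives
\[
w=Z_0(\psi-\tilde\psi)+K[F-\tilde F]-K[(p-\tilde p)z]-K[\tilde p\,w].
\]
The splitting $pz-\tilde p\tilde z=(p-\tilde p)z+\tilde p\,w$ is chosen on purpose. It puts the known bound $|z|\le M$ from \eqref{eq:supbound} on the coefficient difference, and leaves only $\tilde P$ multiplying the unknown $w$. This explains why $M$ belongs to $z$ while the Mittag-Leffler factor involves $\tilde P$.

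For the sup-norm estimate \eqref{eq:stab0}, we apply Lemma~\ref{lem:ops}(a),(b). This uses only $Z,Y\ge0$ and the normalisations \eqref{eq:norm}. It gives
\[
\|w\|(t)\le|\psi-\tilde\psi|_0+I^\alpha_{0+}\bigl(\|F-\tilde F\|_0+M\|p-\tilde p\|_0\bigr)(t)+\tilde P\,(I^\alpha_{0+}\|w\|)(t).
\]
The middle term is at most $\frac{T^\alpha}{\Gamma(1+\alpha)}(\|F-\tilde F\|_0+M\|p-\tilde p\|_0)$. The function $\|w\|(t)$ is bounded and measurable, since $w\in C_b$; measurability follows as a supremum of continuous functions, so it is lower semicontinuous. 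We can therefore apply Lemma~\ref{lem:gronwall} with $A$ equal to the bracket in \eqref{eq:stab0} and $B=\tilde P$. This yields $\|w\|(t)\le A\,E_\alpha(\tilde Pt^\alpha)$. Strictly, Lemma~\ref{lem:gronwall} is stated with a constant $A$, so we first bound the time-dependent inhomogeneity by its value at $T$.

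For the H\"older estimate \eqref{eq:stabl}, we write $w=Z_0(\psi-\tilde\psi)+K[G_w]$ with $G_w=F-\tilde F-(p-\tilde p)z-\tilde p w$. This $G_w$ is bounded and continuous, and $\|G_w\|(\tau)\le\|F-\tilde F\|_0+M\|p-\tilde p\|_0+\tilde P\|w\|_0$. Lemma~\ref{lem:ops}(a),(b) then give \eqref{eq:stabl} directly. Note that Lemma~\ref{lem:ops}(b) requires only continuity and boundedness of $G$, not H\"older continuity, so no H\"older norm of $p-\tilde p$ enters. This is exactly why the dependence on $p$ is only through its sup-norm.

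For the consequence, we insert \eqref{eq:stab0} (evaluated at $t=T$, since $E_\alpha$ is increasing) into the last term of \eqref{eq:stabl}. We also bound $[\psi-\tilde\psi]_\ell$ and $|\psi-\tilde\psi|_0$ by $|\psi-\tilde\psi|_\ell$, and add the sup-norm bound. The constant $M$ depends on $\lambda_0$ and $P$, and $E_\alpha(\tilde PT^\alpha)$ depends on $\tilde P$, giving $C_s$ as stated.

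There is no real obstacle here. The only delicate points are the choice of splitting and checking that $\|w\|(\cdot)$ meets the hypotheses of the Gronwall lemma, which holds because $w$ is bounded and continuous.
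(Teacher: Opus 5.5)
Your proposal is correct and takes essentially the same route as the paper. You use the same splitting $pz-\tilde p\tilde z=(p-\tilde p)z+\tilde p(z-\tilde z)$, Lemma~\ref{lem:ops}(a),(b) with $|z|\le M$ and the Gronwall Lemma~\ref{lem:gronwall} for \eqref{eq:stab0}, then Lemma~\ref{lem:ops}(a),(b) on the same identity for \eqref{eq:stabl}; your extra details (measurability of $t\mapsto\|z-\tilde z\|(t)$, freezing the inhomogeneity at $T$, and inserting \eqref{eq:stab0} into \eqref{eq:stabl} to get $C_s$) only make explicit what the paper leaves implicit.
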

\textit{Proof.} Subtracting the integral equations, $d=z-\tilde z$ satisfies
$d=Z_0(\psi-\tilde\psi)+K[F-\tilde F]-K[(p-\tilde p)z]-K[\tilde pd]$. By Lemma~\ref{lem:ops} (a),(b) and \eqref{eq:supbound},
$\|d\|(t)\le A+\tilde P(I_{0+}^{\alpha} \|d\|)(t)$ with $A$ the bracket in \eqref{eq:stab0}; Lemma~\ref{lem:gronwall} gives \eqref{eq:stab0}.
The H\"{o}lder  part follows from Lemma~\ref{lem:ops} (a),(b) applied to the same identity.

\begin{remark}\label{rem:compare}
For $\mathcal L=\Delta$, $N=1,2,3$, the estimate \eqref{eq:supbound} is \cite[(18)]{DD21}, \cite[(21)]{DRB21}, \cite[(18)]{UD21} with
$C_0\Gamma(\alpha)=1.$
\end{remark}

\section*{5. Reduction of the inverse problem: the equivalent problem}\label{sec:reduction}

In what follows, $N=n$ and $\mathcal L=L$ satisfies (A1)--(A2), and the coefficient is $p(x,t)=q(x',t)$. We assume \\
(D1) $\varphi\in H^{2+\ell}(\mathbb{R}^n)$;\\
(D2) $f,\ \partial_{x_n}f,\ \partial^2_{x_n}f\in C^{\ell,0}_b(\overline\Pi_T)$;\\
(D3) $g$ and $\partial^{\beta}_{x'}g$, $|\beta|\le2$, belong to $C^{\ell,0}_b(\mathbb{R}^{n-1}\times[0,T])$, and for every $x'$ the function $g(x',\cdot)$ satisfies (c), (d) of Definition~\ref{def:reg} (so ${}^CD_t g$ is defined on $(0,T]$). For $n=1$: $g\in C[0,T]\cap H^{\alpha+\lambda}_\nu[0,T]$ and
$I^{1-\alpha}_{0+}[g-g(0)]\in C^1(0,T]$;\\
(D4) $q\in C^{\ell,0}_b(\mathbb{R}^{n-1}\times[0,T])$ (for $n=1$: $q\in C[0,T]$).

Applying Theorem~\ref{thm:direct} with $\psi=\varphi$, $F=f$, we conclude that problem \eqref{eq:main}--\eqref{eq:init} admits a unique regular solution  $u$.
Consider also the two direct problems
\begin{align}
& \qquad\qquad\qquad {}^CD_t w-Lw+q(x',t)w=\partial_{x_n}f,\qquad w(x,0)=\partial_{x_n}\varphi(x),\label{eq:Pw}\tag{$\mathrm P_w$}\\
&\qquad\qquad\qquad {}^CD_t v-Lv+q(x',t)v=\partial^2_{x_n}f,\qquad v(x,0)=\partial^2_{x_n}\varphi(x),\label{eq:Pv}\tag{$\mathrm P_v$}
\end{align}
which by (D1), (D2) and Theorem~\ref{thm:direct} have unique regular solutions; moreover $w$ has a bounded gradient
(Theorem~\ref{thm:direct}(iv), since $\partial_{x_n}\varphi$ is Lipschitz).

\subsection*{5.1. Differentiation with respect to $x_n$}

\begin{lemma}\label{lem:transl}
Under (A2), if $z$ is a regular solution of \eqref{eq:P} with $\mathcal L=L$, coefficient $q(x',t)$ and data $(\psi,F)$, then for
$h\in \mathbb{R}$ the function $z(x+he_n,t)$ is the regular solution with the same coefficients and the data
$(\psi(\cdot+he_n),F(\cdot+he_n,\cdot))$.
\end{lemma}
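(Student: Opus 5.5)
The plan is to check directly that the translate $z_h(x,t):=z(x+he_n,t)$ satisfies each requirement in the definition of a regular solution of \eqref{eq:P} with the shifted data. Uniqueness of regular solutions (Theorem~\ref{thm:direct}(iii)) then identifies $z_h$ with \emph{the} regular solution for $(\psi(\cdot+he_n),F(\cdot+he_n,\cdot))$. The shifted data satisfy the hypotheses of Theorem~\ref{thm:direct}: translation preserves $|\cdot|_0$ and the global H\"older seminorms, so $\psi(\cdot+he_n)\in H^\ell$ and $F(\cdot+he_n,\cdot)\in C^{\ell,0}_b$. The coefficient $p=q(x',t)$ is unchanged, because $q$ does not depend on $x_n$.

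First, membership $z_h\in\mathcal U_T(\mathbb R^n)$. Boundedness and continuity on $\mathbb R^n\times[0,T]$ are clear. The seminorm $\sup_t[z_h(\cdot,t)]_{\ell'}=\sup_t[z(\cdot,t)]_{\ell'}$ is invariant under translation, so (a) holds. $C^2$-regularity in $x$ for each $t>0$ is also translation invariant, which gives (b). Conditions (c) and (d) are pointwise in $x$: for fixed $x$, the function $t\mapsto z_h(x,t)$ is $t\mapsto z(x+he_n,t)$, which has properties (c), (d) because they hold at the point $x+he_n$. The initial condition is immediate: $z_h(x,0)=z(x+he_n,0)=\psi(x+he_n)$.

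Next, the equation. The Gerasimov--Caputo derivative acts only in $t$, so ${}^CD_tz_h(x,t)=({}^CD_tz)(x+he_n,t)$, and likewise $D^m_xz_h(x,t)=(D^m_xz)(x+he_n,t)$ for $|m|\le2$ and $t>0$. This is where (A2) is used: since $a_{ij},b_j$ depend only on $x'$ and $(x+he_n)'=x'$,
\[
(Lz_h)(x,t)=\sum a_{ij}(x')\,\partial_{ij}z(x+he_n,t)+\sum b_j(x')\,\partial_jz(x+he_n,t)=(Lz)(x+he_n,t),
\]
and $q(x',t)z_h(x,t)=q((x+he_n)',t)\,z(x+he_n,t)$. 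Evaluating \eqref{eq:P} at the point $(x+he_n,t)$ then gives ${}^CD_tz_h-Lz_h+qz_h=F(x+he_n,t)$ on $\mathbb R^n\times(0,T]$.

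So $z_h$ is a regular solution with the shifted data, and by Theorem~\ref{thm:direct}(iii) it is the unique one. I expect no real obstacle. The only point needing care is to make sure that every condition of Definition~\ref{def:reg} is translation invariant. This holds because the H\"older seminorms are global rather than localized, and because (c), (d) are imposed for each fixed $x$, with no uniformity in $x$ required.
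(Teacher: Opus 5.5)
Your proposal is correct and follows the paper's own argument: the paper's proof simply notes that the coefficients of $L$ and $q$ are invariant under $x\mapsto x+he_n$ and that every condition of Definition~\ref{def:reg} is translation invariant. You spell out the same check condition by condition and make explicit the use of uniqueness from Theorem~\ref{thm:direct}(iii), which the paper leaves implicit.
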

\textit{Proof.} The coefficients of $L$ and $q$ are invariant under $x\mapsto x+he_n$; all conditions of Definition~\ref{def:reg} are translation invariant.

\begin{theorem}\label{thm:diff}
Assume (A1)--(A2), (D1), (D2), (D4)). Then $\partial_{x_n}u$ and $\partial^2_{x_n}u$ exist and are continuous in $\overline\Pi_T$, and
\begin{equation*}
\qquad\qquad\qquad \partial_{x_n}u=w,\qquad \partial^2_{x_n}u=v,\qquad \partial_{x_i}\partial_{x_n}u=\partial_{x_i}w\quad \text{for} \, \, t\in(0,T],\ 1\le i\le n.
\end{equation*}
\end{theorem}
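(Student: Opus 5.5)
We will prove the statement with difference quotients, using the translation invariance of Lemma~\ref{lem:transl} together with the stability estimate of Theorem~\ref{thm:stab}. For $h\ne0$ put $u^h(x,t)=u(x+he_n,t)$. By Lemma~\ref{lem:transl}, $u^h$ is the regular solution of \eqref{eq:P} with $\mathcal L=L$, coefficient $q$ and data $(\varphi(\cdot+he_n),f(\cdot+he_n,\cdot))$. By linearity, the difference quotient $D_hu=(u^h-u)/h$ is the regular solution with the same coefficient and the data
\[
\psi_h=\frac{\varphi(\cdot+he_n)-\varphi}{h},\qquad F_h=\frac{f(\cdot+he_n,\cdot)-f}{h}.
\]
We then compare $D_hu$ with $w$, the solution of \eqref{eq:Pw}, applying Theorem~\ref{thm:stab} with $p=\tilde p=q$. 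Because the coefficients coincide, the $M\|p-\tilde p\|_0$ term vanishes, and we get
\[
\|D_hu-w\|_0\le\Bigl[|\psi_h-\partial_{x_n}\varphi|_0+\tfrac{T^\alpha}{\Gamma(1+\alpha)}\|F_h-\partial_{x_n}f\|_0\Bigr]E_\alpha(\|q\|_0T^\alpha).
\]

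It remains to show that the bracket tends to $0$ as $h\to0$. By the mean value theorem,
\[
\psi_h-\partial_{x_n}\varphi=\int_0^1\bigl[\partial_{x_n}\varphi(x+\theta he_n)-\partial_{x_n}\varphi(x)\bigr]\,d\theta,
\]
which is $O(|h|)$ since $\varphi\in H^{2+\ell}$ by (D1). Likewise $|F_h-\partial_{x_n}f|\le\sup_\theta|\partial_{x_n}f(x+\theta he_n,t)-\partial_{x_n}f(x,t)|$. This is where the main obstacle lies: (D2) gives only boundedness and continuity of $\partial_{x_n}f$, plus Hölder continuity in $x$ uniformly in $t$. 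The last property is exactly what gives the bound $[\partial_{x_n}f]_\ell|h|^\ell\to0$ uniformly on $\overline\Pi_T$, so mere continuity would not be enough, and this step works only because of the $C^{\ell,0}_b$ norm.

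With this, $D_hu\to w$ uniformly on $\overline\Pi_T$, so $\partial_{x_n}u$ exists, equals $w$, and is continuous. The same argument applied to $w$, with data $(\partial_{x_n}\varphi,\partial_{x_n}f)$, uses $\partial^2_{x_n}\varphi\in H^\ell$ and $\partial^2_{x_n}f\in C^{\ell,0}_b$ to give $\partial_{x_n}w=v$ uniformly. Hence $\partial^2_{x_n}u=v$, and it is continuous.

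For the mixed derivatives we argue as follows. For $t>0$, $u(\cdot,t)\in C^2$ by Definition~\ref{def:reg}(b), so $\partial_{x_i}\partial_{x_n}u$ exists and is continuous, and it equals $\partial_{x_n}\partial_{x_i}u$. Since $\partial_{x_n}u=w$ identically in $x$, and $w(\cdot,t)\in C^1$ (indeed $C^2$) for $t>0$, differentiating the identity $\partial_{x_n}u=w$ in $x_i$ gives $\partial_{x_i}\partial_{x_n}u=\partial_{x_i}w$. This needs no further limiting argument.
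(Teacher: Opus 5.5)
Your proposal is correct and follows the paper's proof essentially step for step. Like the paper, you turn the difference quotients into regular solutions via Lemma~\ref{lem:transl}, apply Theorem~\ref{thm:stab} with $p=\tilde p=q$, and use the mean value theorem with the H\"older seminorms from (D1)--(D2) to get uniform convergence to $w$. You then repeat the argument for $w$ to obtain $v$, and use the $C^2$ regularity for $t>0$ for the mixed derivatives.

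One point you leave implicit, which the paper merely asserts, is that the difference-quotient data $(\psi_h,F_h)$ belong to $H^\ell\times C^{\ell,0}_b$, as Theorem~\ref{thm:stab} requires. This follows at once from writing them as the averages $\int_0^1\partial_{x_n}\varphi(x+\theta he_n)\,d\theta$ and $\int_0^1\partial_{x_n}f(x+\theta he_n,t)\,d\theta$.
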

\textit{Proof.}
For $h\ne0$ let $\Delta_hz(x,t)=h^{-1}[z(x+he_n,t)-z(x,t)]$. By Lemma~\ref{lem:transl} and linearity, $\Delta_hu$ is the regular solution
of \eqref{eq:P} with coefficient $q$ and data $(\Delta_h\varphi,\Delta_hf)$, which satisfy the hypotheses of Theorem~\ref{thm:direct}.
Theorem~\ref{thm:stab} with $p=\tilde p=q$ gives
\[
\qquad\qquad\qquad \sup_{\overline\Pi_T}|\Delta_hu-w|\le\Bigl[|\Delta_h\varphi-\partial_{x_n}\varphi|_0+\frac{T^\alpha}{\Gamma(1+\alpha)}\|\Delta_hf-\partial_{x_n}f\|_0\Bigr]
E_{\alpha}(\|q\|_0T^{\alpha}).
\]
By the mean value theorem, $|\Delta_h\varphi-\partial_{x_n}\varphi|_0\le [\partial_{x_n}\varphi]_{\ell}|h|^\ell$ and
$\|\Delta_hf-\partial_{x_n}f\|_0\le\sup_t[\partial_{x_n}f(\cdot,t)]_{\ell} |h|^\ell$, so $\Delta_hu\to w$ uniformly, i.e.\ $\partial_{x_n}u$ exists
and equals $w$. By the same argument for $w$ (data $\partial_{x_n}\varphi$, $\partial_{x_n}f$, whose $x_n$-derivatives $\partial^2_{x_n}\varphi$,
$\partial^2_{x_n}f$ are H\"{o}lder continuous by (D1), (D2) gives $\partial_{x_n}w=v$. The last identity holds because $u(\cdot,t)\in C^2$ for $t>0$.

\subsection*{5.2. The trace condition and the equivalence theorem}
Evaluating \eqref{eq:main} on $\{x_n=0\}$ and splitting $L$ into the tangential part \eqref{eq:Lprime} and the terms containing $\partial_{x_n}$, we get
\[
\qquad\qquad\qquad Lu\big|_{x_n=0}=L'\bigl(u|_{x_n=0}\bigr)+\Bigl[a_{nn}\,\partial^2_{x_n}u+2\sum_{i=1}^{n-1}a_{in}\,\partial_{x_i}\partial_{x_n}u+b_n\,\partial_{x_n}u\Bigr]_{x_n=0}.
\]
This motivates the following trace condition on the pair $(w,v)$:
\begin{equation*}
\qquad a_{nn}(x')v(x',0,t)+2\sum_{i=1}^{n-1}a_{in}(x')\,\partial_{x_i}w(x',0,t)+b_n(x')\,w(x',0,t)
\end{equation*}
\begin{equation}\label{eq:trace}
\qquad\qquad\qquad\qquad\qquad={}^CD_t g(x',t)-L'g(x',t)+q(x',t)g(x',t)-f(x',0,t)    
\end{equation}
for $x'\in \mathbb{R}^{n-1}$, $t\in(0,T]$ (for $n=1$: $a\,v(0,t)+b\,w(0,t)={}^CD_t g(t)+q(t)g(t)-f(0,t)$).

\begin{theorem}[Equivalence]\label{thm:equiv}
Assume (A1)--(A2), (D1)--(D3), (D4). The following statements are equivalent:\\
(I) the regular solution $u$ of \eqref{eq:main}, \eqref{eq:init} satisfies the overdetermination condition \eqref{eq:over};\\
(II) the compatibility condition $\varphi(x',0)=g(x',0)$ holds, and the regular solutions $w$, $v$ of problems \eqref{eq:Pw}, \eqref{eq:Pv} satisfy
the trace condition \eqref{eq:trace}. \\
Thus $q$ solves the inverse problem \eqref{eq:main}--\eqref{eq:over} if and only if $q$, together with the regular solutions of the direct problems
\eqref{eq:Pw}, \eqref{eq:Pv}, satisfies \eqref{eq:trace}, provided $\varphi(x',0)=g(x',0)$.
\end{theorem}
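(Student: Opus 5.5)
The proof has two directions. Both rest on Theorem~\ref{thm:diff}, which gives $\partial_{x_n}u=w$, $\partial^2_{x_n}u=v$ and $\partial_{x_i}\partial_{x_n}u=\partial_{x_i}w$ for $t>0$. Substituting these into the splitting of $Lu|_{x_n=0}$ displayed before \eqref{eq:trace} gives, for $x'\in\mathbb{R}^{n-1}$ and $t\in(0,T]$, the exact identity
\begin{equation*}
{}^CD_t u(x',0,t)-L'\bigl(u|_{x_n=0}\bigr)(x',t)+q\,u(x',0,t)-f(x',0,t)
=a_{nn}v+2\sum_{i<n}a_{in}\partial_{x_i}w+b_nw\Big|_{x_n=0}.
\end{equation*}
This identity is just equation \eqref{eq:main} restricted to the hyperplane, which is legitimate because $u(\cdot,t)\in C^2$ and the Caputo derivative in \eqref{eq:caputo} acts pointwise in $x$. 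So \eqref{eq:trace} states exactly that $\tilde g:=u|_{x_n=0}$ and $g$ satisfy the same tangential equation.

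\emph{(I)$\Rightarrow$(II).} If $u(x',0,t)=g$, put $t=0$ to get $\varphi(x',0)=g(x',0)$, using continuity of $u$ on $\overline\Pi_T$. Then replace $u|_{x_n=0}$ by $g$ in the identity above, which gives \eqref{eq:trace} directly.

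\emph{(II)$\Rightarrow$(I).} Set $d(x',t)=u(x',0,t)-g(x',t)$. Subtracting \eqref{eq:trace} from the identity shows that $d$ satisfies
\[
{}^CD_t d-L'd+q(x',t)d=0 \text{ in }\mathbb{R}^{n-1}\times(0,T],\qquad d(x',0)=\varphi(x',0)-g(x',0)=0.
\]
This is problem \eqref{eq:P} with $N=n-1$, $\mathcal L=L'$ (which satisfies (A1) in $\mathbb{R}^{n-1}$ by the remark after \eqref{eq:Lprime}) and $p=q$. By Theorem~\ref{thm:direct}(iii) with zero data its unique regular solution is $0$, so $d\equiv0$, which is \eqref{eq:over}. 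For $n=1$ the problem is the scalar fractional ODE ${}^CD_td+q(t)d=0$, $d(0)=0$. It is equivalent to $d=-I^\alpha_{0+}[qd]$, and Lemma~\ref{lem:gronwall} with $A=0$ gives $d=0$.

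The main obstacle is checking that $d$ lies in the class $\mathcal U_T(\mathbb{R}^{n-1})$, so that the uniqueness part of Theorem~\ref{thm:direct} applies; this is where the assumptions are used. I would verify the conditions of Definition~\ref{def:reg} one at a time.
\begin{itemize}
\item Boundedness, continuity and the uniform H\"older bound (a) hold for $u|_{x_n=0}$ by Theorem~\ref{thm:direct}(ii), and for $g$ by (D3).
\item Condition (b), $C^2$ in $x'$ for $t>0$, holds for $u$ because $u(\cdot,t)\in C^2(\mathbb{R}^n)$, and for $g$ by (D3).
\item Conditions (c) and (d) hold for $u(x,\cdot)$ for every $x$, in particular on $x_n=0$, and for $g$ by (D3).
\end{itemize}
It remains to note that the Caputo derivative of the trace equals the trace of the Caputo derivative, which is immediate since ${}^CD_t$ acts only in $t$. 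Linearity of these conditions then puts $d$ in $\mathcal U_T(\mathbb{R}^{n-1})$.

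The final sentence of the theorem restates the equivalence with $q$ regarded as the unknown, so it needs no separate argument.
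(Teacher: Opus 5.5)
Your proof is correct and follows the paper's argument essentially step by step: you restrict \eqref{eq:main} to $\{x_n=0\}$ via Theorem~\ref{thm:diff} and read off \eqref{eq:trace} for (I)$\Rightarrow$(II); for (II)$\Rightarrow$(I) you show that $u(x',0,t)-g$ is a regular solution in $\mathcal U_T(\mathbb{R}^{n-1})$ of the homogeneous tangential problem, invoke the uniqueness part of Theorem~\ref{thm:direct}(iii), and handle the scalar case $n=1$ by the fractional Gronwall lemma. The only step the paper spells out that you merely assert is the passage from ${}^CD_t d+qd=0$, $d(0)=0$ to $d=-I^{\alpha}_{0+}[qd]$ when $n=1$; the paper justifies it by integrating $\Theta=I^{1-\alpha}_{0+}d$ (continuous, $\Theta(0)=0$, with $\Theta'=-qd$ bounded) and then using the injectivity of $I^{1-\alpha}_{0+}$ on $C[0,T]$.
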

\textit{Proof.} Show (I)$\Rightarrow$(II). Since $u$ is continuous up to $t=0$, \eqref{eq:over} and \eqref{eq:init} give $g(x',0)=u(x',0,0)=\varphi(x',0)$. Setting
$x_n=0$ in \eqref{eq:main}, using $u(x',0,t)=g(x',t)$ (so that ${}^CD_t$ and $L'$ of $u|_{x_n=0}$ are those of $g$) and Theorem~\ref{thm:diff},
we obtain \eqref{eq:trace}.

(II)$\Rightarrow$(I). Put $\tilde u(x',t)=u(x',0,t)$ and $\vartheta=\tilde u-g$. By Theorem~\ref{thm:diff}, \eqref{eq:main} at $x_n=0$ reads
\[
\qquad\qquad\qquad\qquad {}^CD_t\tilde u-L'\tilde u-\Bigl[a_{nn}v+2\sum_{i<n}a_{in}\partial_{x_i}w+b_nw\Bigr]_{x_n=0}+q\tilde u=f(x',0,t).
\]
Subtracting \eqref{eq:trace} we obtain
\begin{equation}\label{eq:theta}
\qquad\qquad{}^CD_t\vartheta-L'\vartheta+q\vartheta=0\quad\text{in }\mathbb{R}^{n-1}\times(0,T],\qquad \vartheta(x',0)=\varphi(x',0)-g(x',0)=0 .
\end{equation}
Let $n\ge2$. The function $\vartheta$ belongs to $\mathcal{U}_T(\mathbb{R}^{n-1})$: it is bounded and continuous, H\"{o}lder in $x'$ uniformly in $t$
($u\in C^{\ell,0}_b$ by Theorem~\ref{thm:direct}(ii), $g$ by (D3), twice continuously differentiable in $x'$ for $t>0$, and (c), (d) of
Definition~\ref{def:reg} hold for $u(x',0,\cdot)$ and, by (D3), for $g(x',\cdot)$, hence for the difference. Since $L'$ satisfies (A1) in
$\mathbb{R}^{n-1}$ and $q\in C^{\ell,0}_b(\mathbb{R}^{n-1}\times[0,T])$, $\vartheta$ is a regular solution of \eqref{eq:P} in $\mathbb{R}^{n-1}$ with $\psi=0$, $F=0$; by
the uniqueness part of Theorem~\ref{thm:direct}, $\vartheta\equiv0$, which is \eqref{eq:over}.

Let $n=1$; then \eqref{eq:theta} is ${}^CD_t\vartheta+q\vartheta=0$, $\vartheta(0)=0$. The function $\Theta=I^{1-\alpha}_{0+}\vartheta$ is continuous on
$[0,T]$, $\Theta(0)=0$, and $\Theta'=-q\vartheta$ is bounded and continuous on $(0,T]$; hence $\Theta=I^{1}_{0+}(-q\vartheta)=I^{1-\alpha}_{0+}I_{0+}^{\alpha}(-q\vartheta)$,
and the injectivity of $I^{1-\alpha}_{0+}$ on $C[0,T]$ gives $\vartheta=-I_{0+}^{\alpha}(q\vartheta)$. Lemma~\ref{lem:gronwall} with $A=0$ yields $\vartheta=0$.

\begin{corollary}[The case (A3)]\label{cor:A3}
If in addition $a_{in}\equiv0$ \textup{(}$i<n$\textup{)} and $b_n\equiv0$, the function $w$ does not enter \eqref{eq:trace}, and the inverse problem is
equivalent to the direct problem \eqref{eq:Pv} together with
\begin{equation}\label{eq:traceA3}
a_{nn}(x')\,v(x',0,t)={}^CD_t g(x',t)-L'g(x',t)+q(x',t)g(x',t)-f(x',0,t),\qquad \varphi(x',0)=g(x',0).
\end{equation}
For $L=\Delta$ ($a_{nn}=1$, $L'=\Delta_{x'}$) this is exactly the equivalent problem of \cite[(5)--(7)]{DD21} for $n=1$,
\cite[(4)--(6)]{DRB21} for $n=2$ and \cite[(4)--(6)]{UD21} for $n=3$.
\end{corollary}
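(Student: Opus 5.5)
This is a direct specialisation of Theorem~\ref{thm:equiv}, so the plan is to substitute (A3) into the trace condition \eqref{eq:trace} and read off the result. Under (A3) the coefficients $a_{in}$, $1\le i\le n-1$, and $b_n$ vanish identically. The left-hand side of \eqref{eq:trace} therefore collapses to $a_{nn}(x')v(x',0,t)$, and $w$ no longer appears. The right-hand side is unchanged, so \eqref{eq:trace} becomes exactly \eqref{eq:traceA3}.

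For the equivalence, I will invoke Theorem~\ref{thm:equiv}. The inverse problem (I) is equivalent to the compatibility condition $\varphi(x',0)=g(x',0)$ together with \eqref{eq:trace} for the regular solutions $w$, $v$ of \eqref{eq:Pw}, \eqref{eq:Pv}. Since \eqref{eq:trace} now involves only $v$, the problem \eqref{eq:Pw} plays no role in the condition. Its solution $w$ still exists (Theorem~\ref{thm:direct}) but can be discarded.

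One point needs checking: that the derivation in Theorem~\ref{thm:equiv}, direction (II)$\Rightarrow$(I), does not secretly use $w$. There $w$ enters only through the identity $\partial_{x_n}u=w$ in the bracket $[a_{nn}v+2\sum a_{in}\partial_{x_i}w+b_nw]$. Under (A3) that bracket reduces to $a_{nn}v$, so the argument goes through verbatim with $Lu|_{x_n=0}=L'(u|_{x_n=0})+a_{nn}\partial^2_{x_n}u|_{x_n=0}$. I still need $\partial^2_{x_n}u=v$, which is Theorem~\ref{thm:diff}. That theorem internally uses $w$ as an intermediate step, but this does not matter for the statement of the corollary.

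For $L=\Delta$ we have $a_{ij}=\delta_{ij}$ and $b_j=0$, hence $a_{nn}=1$ and $L'=\Delta_{x'}$. Then \eqref{eq:traceA3} reads $v(x',0,t)={}^CD_tg-\Delta_{x'}g+qg-f(x',0,t)$, and together with \eqref{eq:Pv} this is to be compared term by term with the cited formulations. For $n=1$ the tangential Laplacian term is absent. There is no genuine obstacle. The only care needed is in that identification with the cited papers: matching notation, in particular the sign convention of $q$ and the use of the regularised Caputo derivative.
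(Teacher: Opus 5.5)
Your proposal is correct and follows the paper's approach: the paper gives no separate proof and treats the corollary as an immediate specialisation of Theorem~\ref{thm:equiv}, in which the $w$-terms of \eqref{eq:trace} vanish under (A3) and leave \eqref{eq:traceA3}. Your extra checks are sound but not strictly needed: that $w$ still exists by Theorem~\ref{thm:direct} and serves only as an intermediate step in Theorem~\ref{thm:diff}, and that the direction (II)$\Rightarrow$(I) uses only $\partial^2_{x_n}u=v$.
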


\subsection*{5.3. The equivalent system of integral equations}
Assume, as in \cite{DD21,DRB21,UD21}, that $|g(x',t)|\ge g_0>0$. By Theorem~\ref{thm:direct}, the regular solutions of \eqref{eq:Pw},
\eqref{eq:Pv} are the solutions of
\begin{align}
\qquad\qquad\qquad\qquad\qquad w&=w_0-K[qw],\qquad w_0=Z_0\,\partial_{x_n}\varphi+K[\partial_{x_n}f],\label{eq:IEw}\\
\qquad\qquad\qquad\qquad\qquad v&=v_0-K[qv],\qquad\ \ v_0=Z_0\,\partial^2_{x_n}\varphi+K[\partial^2_{x_n}f],\label{eq:IEv}
\end{align}
where $Z,Y$ are the  kernels of $L$ in $\mathbb{R}^n$ from  Theorem \ref{thm:EK} and
$K[G](x,t)=\int_0^t\!\int_{\mathbb{R}^n}Y(t-\tau,x,\xi)G(\xi,\tau) d\xi d\tau$. By Lemma~\ref{lem:ops}(c) the tangential derivatives in \eqref{eq:trace}
may be computed under the integral sign. Dividing \eqref{eq:trace} by $g$ we obtain the equation for $q$:
\begin{equation}\label{eq:IEq}
\qquad\qquad\qquad\qquad\qquad q(x',t)=q_0(x',t)-\frac{1}{g(x',t)}\,\mathcal B[q](x',t),
\end{equation}
\begin{align*}
\qquad q_0&=\frac{1}{g}\Bigl\{f(x',0,t)-{}^CD_t g+L'g+\Bigl[a_{nn}v_0+2\sum_{i<n}a_{in}\partial_{x_i}w_0+b_nw_0\Bigr]_{x_n=0}\Bigr\},\\
\mathcal \qquad\qquad B[q]&=\Bigl[a_{nn}K[qv]+2\sum_{i<n}a_{in}\partial_{x_i}K[qw]+b_nK[qw]\Bigr]_{x_n=0},
\end{align*}
where $w=w(q)$, $v=v(q)$ are the solutions of \eqref{eq:IEw}, \eqref{eq:IEv}; explicitly, for instance,
\begin{equation*}
\qquad\qquad\qquad\qquad K[qv](x',0,t)=\int_0^t\!\!\int_{\mathbb{R}^n}Y\bigl(t-\tau,(x',0),\xi\bigr)\,q(\xi',\tau)\,v(\xi,\tau) d\xi d\tau .
\end{equation*}

\begin{proposition}\label{prop:system}
According to  the assumptions of Theorem~\ref{thm:equiv} and $|g|\ge g_0>0$, a function $q\in C^{\ell,0}_b(\mathbb{R}^{n-1}\times[0,T])$ is a solution of the inverse
problem \eqref{eq:main}--\eqref{eq:over} if and only if $\varphi(x',0)=g(x',0)$ and the triple $(q,w,v)$ solves the system
\eqref{eq:IEw}--\eqref{eq:IEq}. Under (A3) the system reduces to \eqref{eq:IEv} and
\begin{equation}\label{eq:IEqA3}
\qquad\qquad\qquad q=\frac{1}{g}\Bigl\{f(x',0,t)-{}^CD_t g+L'g+a_{nn}v_0(x',0,t)\Bigr\}-\frac{a_{nn}(x')}{g(x',t)}\,K[qv](x',0,t),
\end{equation}
which for $L=\Delta$ is \cite[(24)]{DD21}, \cite[(27)]{DRB21}, \cite[(23)]{UD21}.
\end{proposition}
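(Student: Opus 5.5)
The proof combines Theorem~\ref{thm:equiv} with the integral-equation characterisation of regular solutions in Theorem~\ref{thm:direct}. Throughout, $q\in C^{\ell,0}_b(\mathbb{R}^{n-1}\times[0,T])$ is fixed. I regard it as a function on $\mathbb{R}^n\times[0,T]$ that does not depend on $x_n$, so that it satisfies the hypothesis on $p$ in Theorem~\ref{thm:direct}.

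First step: the equations for $w$ and $v$. By Theorem~\ref{thm:direct}(i),(iii), applied with data $(\partial_{x_n}\varphi,\partial_{x_n}f)$ and $(\partial^2_{x_n}\varphi,\partial^2_{x_n}f)$, which are admissible by (D1), (D2), the following three statements are equivalent:
\begin{itemize}
\item $w\in C_b$ solves \eqref{eq:IEw} and $v\in C_b$ solves \eqref{eq:IEv};
\item $w$ and $v$ are the unique regular solutions of \eqref{eq:Pw} and \eqref{eq:Pv};
\item the triple $(q,w,v)$ satisfies the first two equations of the system.
\end{itemize}
In particular, these equations determine $w=w(q)$ and $v=v(q)$ uniquely.

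Second step: the trace condition versus \eqref{eq:IEq}. I would show that, for these $w,v$, the trace condition \eqref{eq:trace} is equivalent to \eqref{eq:IEq}. Substituting $w=w_0-K[qw]$ and $v=v_0-K[qv]$ into the left side of \eqref{eq:trace} at $x_n=0$ requires computing $\partial_{x_i}w$ termwise. This is justified by Lemma~\ref{lem:ops}(c), for two reasons:
\begin{itemize}
\item $\partial_{x_n}\varphi$ is Lipschitz, so $Z_0\partial_{x_n}\varphi$ is $C^1$ with gradient given under the integral;
\item $\partial_{x_i}K[G]$ exists and is computed under the integral for bounded continuous $G$, with $G=\partial_{x_n}f$ and $G=qw$.
\end{itemize}
After the substitution, the left side becomes
\[
\Bigl[a_{nn}v_0+2\sum_{i<n}a_{in}\partial_{x_i}w_0+b_nw_0\Bigr]_{x_n=0}-\mathcal B[q].
\]
Moving ${}^CD_t g-L'g-f(x',0,t)$ to the other side and dividing by $g$ is a reversible operation because $|g|\ge g_0>0$; here ${}^CD_t g$ and $L'g$ are defined by (D3). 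The result is exactly $q=q_0-g^{-1}\mathcal B[q]$. This step is pure algebra once differentiation under the integral sign is justified.

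Combining and the case (A3). By the two steps, $q$ is admissible and $(q,w,v)$ solves \eqref{eq:IEw}--\eqref{eq:IEq} if and only if statement (II) of Theorem~\ref{thm:equiv} holds, given $\varphi(x',0)=g(x',0)$. Theorem~\ref{thm:equiv} then gives equivalence with (I), i.e.\ with $q$ solving the inverse problem; the compatibility condition is needed on both sides, exactly as in (II). Under (A3) we have $a_{in}=0$ and $b_n=0$, so $w$ enters neither $q_0$ nor $\mathcal B$. Hence \eqref{eq:IEw} decouples and may be dropped, and \eqref{eq:IEq} becomes \eqref{eq:IEqA3}. For $L=\Delta$ the identification with the cited equations is a direct comparison.

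The only delicate point is the legitimacy of evaluating $\partial_{x_i}K[qw]$ at $x_n=0$ and identifying it with $\partial_{x_i}w$ there. Theorem~\ref{thm:diff} gives $\partial_{x_i}\partial_{x_n}u=\partial_{x_i}w$, and Lemma~\ref{lem:ops}(c) gives continuity of these derivatives up to the hyperplane. I expect no further obstacle.
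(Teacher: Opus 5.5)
Your proposal is correct and follows the paper's own argument. The paper proves the proposition by combining Theorem~\ref{thm:equiv} with the equivalence from Theorem~\ref{thm:direct} between regular solutions of \eqref{eq:Pw}, \eqref{eq:Pv} and solutions of \eqref{eq:IEw}, \eqref{eq:IEv}, and it passes from \eqref{eq:trace} to \eqref{eq:IEq} exactly as you do, by termwise differentiation via Lemma~\ref{lem:ops}(c) followed by division by $g$. Your appeal to Theorem~\ref{thm:diff} is unnecessary, since it is already absorbed into Theorem~\ref{thm:equiv}; and dropping \eqref{eq:IEw} under (A3) is legitimate because that equation is uniquely solvable for every admissible $q$.
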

\textit{Proof.} The result follows from Theorem~\ref{thm:equiv} and the equivalence, established in Theorem~\ref{thm:direct}, between regular solutions of \eqref{eq:Pw} and \eqref{eq:Pv} and solutions of \eqref{eq:IEw} and \eqref{eq:IEv}.

\section*{6. The inverse problem: setting and main results} 

\subsection*{6.1. Setting}
Throughout Sections 6--12 we are in the situation of Section 5: $N=n$,
$\mathcal L=L$ satisfies (A1)--(A2), $Z,Y$ are the  kernels of $L$ in $\mathbb{R}^n$ defined by Theorem \ref{thm:EK}, and $Z_0$, $K$ are the
operators in \eqref{eq:repr}. For $n=1$ the variable $x'$ is absent, $C^{\ell,0}_b(\mathbb{R}^{0}\times[0,T])=C[0,T]$, and $L'=0$.
Recall the {\it inverse problem} consists in finding $q(x', t)$ from \eqref{eq:main}--\eqref{eq:over}.

\begin{definition}\label{def:sol}
Let $0<T_1\le T$. A function $q\in C^{\ell,0}_b(\mathbb{R}^{n-1}\times[0,T_1])$ is a \emph{solution of the inverse problem on $[0,T_1]$} if the regular
solution $u$ of \eqref{eq:main}--\eqref{eq:init} in $\mathbb{R}^n\times(0,T_1]$ (which exists and is unique by Theorem~\ref{thm:direct}) satisfies \eqref{eq:over}
for $t\in(0,T_1]$.
\end{definition}

\begin{assumption}[Standing assumptions]\label{ass:S}
(A1)--(A2) of Assumption~\ref{ass:A} hold; the data satisfy (D1)--(D3) of Section 5; $\varphi(x',0)=g(x',0)$; and
\begin{equation}\label{eq:g0}
\qquad\qquad\qquad\qquad\qquad |g(x',t)|\ge g_0>0\qquad\text{on } \mathbb{R}^{n-1}\times[0,T].
\end{equation}
\end{assumption}

\subsection*{6.2. The key quantities}
Given the data  $(\varphi,f,g)$ define, for $(x',t)\in \mathbb{R}^{n-1}\times(0,T]$,
\begin{equation}\label{eq:calG}
\qquad\qquad\qquad\qquad \mathcal G(x',t):={}^CD_t g(x',t)-L'g(x',t)-f(x',0,t).
\end{equation}
By Theorem~\ref{thm:equiv} and \eqref{eq:trace}, $q$ solves the inverse problem if and only if
\begin{equation}\label{eq:traceII}
\qquad\qquad\qquad q\,g=S[q]-\mathcal G,\qquad S[q]:=\Bigl[a_{nn}v+2\sum_{i=1}^{n-1}a_{in}\partial_{x_i}w+b_nw\Bigr]_{x_n=0},
\end{equation}
where $w=w(q)$, $v=v(q)$ are the regular solutions of \eqref{eq:Pw}, \eqref{eq:Pv}. Since $S[q]$, $q$ and $g$ are bounded,
\eqref{eq:traceII} shows that $\mathcal G$ is bounded on $\mathbb{R}^{n-1}\times(0,T]$ whenever the inverse problem has a solution. We set
\begin{gather*}
\qquad\qquad\qquad A_*:=\sup_{x'\in\mathbb{R}^{n-1}}\Bigl(|a_{nn}(x')|+2\sum_{i<n}|a_{in}(x')|+|b_n(x')|\Bigr),\\
\qquad \lambda_w:=|\partial_{x_n}\varphi|_0+\frac{T^\alpha}{\Gamma(1+\alpha)} \|\partial_{x_n}f\|_0,\qquad
\lambda_v:=|\partial^2_{x_n}\varphi|_0+\frac{T^\alpha}{\Gamma(1+\alpha)}\|\partial^2_{x_n}f\|_0,
\end{gather*}
and, for $R>0$, the \emph{conditional (a priori) set}
\[
\qquad\qquad\qquad\qquad Q_R:=\bigl\{q\in C^{\ell,0}_b(\mathbb{R}^{n-1}\times[0,T]):\  \|q\|_0\le R\bigr\}.
\]
 Note that $Q_R\subset C^{\ell,0}_b(\mathbb{R}^{n-1}\times[0,T])$: the H\"{o}lder continuity of $q$ is required
for the direct problems to be classically solvable, but no bound on its H\"{o}lder seminorm
is imposed. Since every $q\in C^{\ell,0}_b(\mathbb{R}^{n-1}\times[0,T])$ belongs to $Q_R$ for some $R$,
uniqueness (Theorem~\ref{thm:B}) holds in the whole space $C^{\ell,0}_b(\mathbb{R}^{n-1}\times[0,T])$.

For two data sets $(\varphi,f,g)$ and $(\tilde\varphi,\tilde f,\tilde g)$, with $\mathcal G,\tilde{\mathcal G}$ given by \eqref{eq:calG}, the
\emph{data distance} is
\begin{multline}\label{eq:dist}
\varrho:=\sup_{\mathbb{R}^{n-1}\times(0,T]}|\mathcal G-\tilde{\mathcal G}|+\|g-\tilde g\|_0
+|\partial_{x_n}(\varphi-\tilde\varphi)|_0\\+|\nabla\partial_{x_n}(\varphi-\tilde\varphi)|_0+|\partial^2_{x_n}(\varphi-\tilde\varphi)|_0
+\|\partial_{x_n}(f-\tilde f)\|_0+\|\partial^2_{x_n}(f-\tilde f)\|_0 .
\end{multline}

\subsection*{6.3. Main results}

\begin{theorem}[Conditional Lipschitz stability]\label{thm:A}
Let both data sets satisfy Assumption~\ref{ass:S} (with the same operator $L$ and $g_0$), and let $q\in Q_R$, $\tilde q\in Q_R$ be solutions of the
corresponding inverse problems on $[0,T]$. Then
\begin{equation}\label{eq:stabA}
\qquad\qquad\qquad\qquad\qquad\qquad\qquad\qquad  \|q-\tilde q\|_0\le C_*\, \varrho,
\end{equation}
where $C_*$ depends only on $n,\alpha,T$, the constants in (A1), $g_0$, $R$, $\lambda_w$ and $\lambda_v$ (computed for the first data set).
An explicit value of $C_*$ is given in \eqref{eq:Cstar}.
\end{theorem}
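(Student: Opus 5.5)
Subtract the two trace identities \eqref{eq:traceII}, $qg=S[q]-\mathcal G$ and $\tilde q\tilde g=\tilde S[\tilde q]-\tilde{\mathcal G}$. This gives
\[
(q-\tilde q)\,g=\tilde q(\tilde g-g)+\bigl(S[q]-\tilde S[\tilde q]\bigr)-(\mathcal G-\tilde{\mathcal G}),
\]
so, by \eqref{eq:g0},
\[
\|q-\tilde q\|(t)\le g_0^{-1}\Bigl(R\|g-\tilde g\|_0+\varrho+\sup_{x'}|S[q]-\tilde S[\tilde q]|(x',t)\Bigr).
\]
Here $\|\cdot\|(t)$ is the sup over $x'$. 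Set $\mu(t):=\|q-\tilde q\|(t)$. The goal is a Volterra-type inequality $\mu(t)\le A\varrho+B\int_0^t(t-\tau)^{\alpha/2-1}\mu(\tau)\,d\tau$, which we then close with a Gronwall argument.

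\emph{Estimating $v-\tilde v$ and $w-\tilde w$.} Let $d_v=v-\tilde v$. Subtracting \eqref{eq:IEv} for the two data sets gives
\[
d_v=Z_0\partial^2_{x_n}(\varphi-\tilde\varphi)+K[\partial^2_{x_n}(f-\tilde f)]-K[(q-\tilde q)v]-K[\tilde q d_v].
\]
By Lemma~\ref{lem:ops}(a),(b), $|v|\le M_v:=\lambda_vE_\alpha(RT^\alpha)$ from \eqref{eq:supbound}, and $|\tilde q|\le R$, this yields
\[
\|d_v\|(t)\le C\varrho+M_vI^\alpha_{0+}\mu(t)+R\,I^\alpha_{0+}\|d_v\|(t).
\]
The key point is to keep the $\tau$-dependence of $\mu$ instead of bounding it by $\|q-\tilde q\|_0$ as in Theorem~\ref{thm:stab}. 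A resolvent/Gronwall argument (iterate as in Lemma~\ref{lem:gronwall}) then gives
\[
\|d_v\|(t)\le C\varrho+C\,I^\alpha_{0+}\mu(t),
\]
with $C$ depending on $R,\lambda_v,T,\alpha$. The same holds for $d_w=w-\tilde w$ with $\lambda_w$ in place of $\lambda_v$.

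\emph{The gradient term.} For $\partial_{x_i}d_w$ we use Lemma~\ref{lem:ops}(c). The initial term is controlled by $c_3|\nabla\partial_{x_n}(\varphi-\tilde\varphi)|_0\le c_3\varrho$. The source term contributes $c_4\int_0^t(t-\tau)^{\alpha/2-1}\|\partial_{x_n}(f-\tilde f)\|\,d\tau$. The remaining terms give
\[
c_4\int_0^t(t-\tau)^{\alpha/2-1}\bigl(M_w\mu(\tau)+R\|d_w\|(\tau)\bigr)\,d\tau.
\]
Inserting the bound for $\|d_w\|$ yields
\[
|\nabla d_w|(t)\le C\varrho+C\,I^{\alpha/2}_{0+}\mu(t),
\]
since $I^{\alpha/2}I^{\alpha}\mu\le C\,I^{\alpha/2}\mu$ on $[0,T]$. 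These bounds hold at $x_n=0$ because they are uniform in $x$.

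\emph{Closing the argument.} Note that $S$ and $\tilde S$ share the coefficients $a_{nn},a_{in},b_n$, since the operator $L$ is the same. Hence
\[
|S[q]-\tilde S[\tilde q]|\le A_*\bigl(\|d_v\|+\|d_w\|+\max_i|\partial_{x_i}d_w|\bigr).
\]
Combining everything, and using $I^\alpha\mu\le C\,I^{\alpha/2}\mu$, we obtain
\[
\mu(t)\le C_1\varrho+C_2\,(I^{\alpha/2}_{0+}\mu)(t),\qquad t\in[0,T].
\]
Lemma~\ref{lem:gronwall} with $\alpha$ replaced by $\alpha/2$ then gives $\mu(t)\le C_1\varrho\,E_{\alpha/2}(C_2t^{\alpha/2})$. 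This is \eqref{eq:stabA} with $C_*=C_1E_{\alpha/2}(C_2T^{\alpha/2})$. In case (A3) the gradient term is absent and order $\alpha$ suffices.

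\emph{Main obstacle and remaining checks.} The delicate step is the gradient term $\partial_{x_i}K[(q-\tilde q)w]$. It has only the weakly singular kernel $(t-\tau)^{\alpha/2-1}$, which is why $\mathrm{Lip}$-control of $\partial_{x_n}\varphi$ enters $\varrho$ and why the Volterra order drops to $\alpha/2$. We must check that Lemma~\ref{lem:ops}(c) applies with the sup-norm of $G(\cdot,\tau)$ for each $\tau$ and no Hölder norm of $q$, so that $C_*$ does not depend on $[q]_\ell$. We also need measurability and boundedness of $\mu$ for Lemma~\ref{lem:gronwall}, which follows from $q,\tilde q\in C_b$.
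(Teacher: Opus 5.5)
Your proof is correct, but it closes the system in the opposite order from the paper.

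\emph{The paper's route.} The paper takes the aggregate $\eta(t)=\|d_w\|(t)+\|\nabla_xd_w\|(t)+\|d_v\|(t)$ as the Gronwall variable. After converting the $I^{\alpha}_{0+}$-terms by Lemma~\ref{lem:comp}, it has $\eta\le\varepsilon_1+k_1I^{\alpha/2}_{0+}e+k_2I^{\alpha/2}_{0+}\eta$. It eliminates $e=\|q-\tilde q\|(\cdot)$ by integrating the pointwise bound $e\le\varepsilon_0+(A_*/g_0)\eta$. It then applies one Gronwall inequality of order $\alpha/2$ with constant forcing to $\eta$ and substitutes back, which yields \eqref{eq:Cstar}.

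\emph{Your route.} You first solve the Volterra inequalities for $\|d_v\|$ and $\|d_w\|$, keeping $\mu$ as a time-dependent forcing. You then bound $\nabla_xd_w$ pointwise in $t$ and run the final Gronwall on $\mu$ itself.

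\emph{What to write out.} Your intermediate step is not literally Lemma~\ref{lem:gronwall}, which has a constant $A$, and $I^{\alpha}_{0+}\mu$ need not be monotone in $t$. So write the iteration out. It gives $\|d_v\|(t)\le C\varrho\,E_\alpha(RT^\alpha)+M_v\sum_{k\ge0}R^kI^{(k+1)\alpha}_{0+}\mu(t)$, where the remainder $R^KI^{K\alpha}_{0+}\|d_v\|$ vanishes because $\|d_v\|$ is bounded. The bound $I^{(k+1)\alpha}_{0+}\mu\le\Gamma(\alpha)T^{k\alpha}\Gamma((k+1)\alpha)^{-1}I^{\alpha}_{0+}\mu$ then makes the series a bounded multiple of $I^{\alpha}_{0+}\mu$. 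The same applies to $\|d_w\|$. This step needs measurability and boundedness of $\|d_v\|(\cdot)$ and $\|d_w\|(\cdot)$, not only of $\mu$. These follow from Lemma~\ref{lem:meas}(a) and Lemma~\ref{lem:apriori}.

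\emph{What each route buys.} In your route the gradient appears only on the left-hand side and is never integrated in time. You therefore do not need the lower semicontinuity of $t\mapsto\|\nabla_xd_w\|(t)$ from Lemma~\ref{lem:meas}(b). The paper's route instead gets by with a single constant-forcing Gronwall inequality and produces the compact explicit constant \eqref{eq:Cstar}.
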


\begin{theorem}[Global uniqueness]\label{thm:B}
Let Assumption~\ref{ass:S} hold. If $q_1,q_2\in C^{\ell,0}_b(\mathbb{R}^{n-1}\times[0,T])$ are two solutions
of the inverse problem \eqref{eq:main}--\eqref{eq:over} on $[0,T]$ with the same operator $L$
and the same data $(\varphi,f,g)$, then $q_1\equiv q_2$.
\end{theorem}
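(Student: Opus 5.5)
Global uniqueness is a direct consequence of Theorem~\ref{thm:A}, and this is the route I will take. Let $q_1,q_2\in C^{\ell,0}_b(\mathbb{R}^{n-1}\times[0,T])$ be two solutions of the inverse problem with the same data $(\varphi,f,g)$. Both are bounded, so both belong to $Q_R$ with $R:=\max\{\|q_1\|_0,\|q_2\|_0\}$. We apply Theorem~\ref{thm:A} with the two data sets taken identical, $(\tilde\varphi,\tilde f,\tilde g)=(\varphi,f,g)$. Both then satisfy Assumption~\ref{ass:S} with the same $g_0$, and $\mathcal G=\tilde{\mathcal G}$. Hence every term of the data distance \eqref{eq:dist} vanishes, $\varrho=0$, and \eqref{eq:stabA} gives $\|q_1-q_2\|_0\le C_*\cdot0=0$, that is, $q_1\equiv q_2$.

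The only delicate point is the finiteness of $C_*$, because $\varrho=0$ contributes nothing only if $C_*<\infty$. Here $C_*$ depends only on $n,\alpha,T$, (A1), $g_0$, $R$, $\lambda_w$ and $\lambda_v$. These are finite by (D1), (D2) and the choice of $R$, and no H\"older seminorm of $q_i$ enters. I will also check that $\sup|\mathcal G-\tilde{\mathcal G}|$ is meaningful: since $\mathcal G=\tilde{\mathcal G}$ pointwise on $\mathbb{R}^{n-1}\times(0,T]$, the difference is identically zero, and boundedness of $\mathcal G$ is not needed.

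If one prefers not to lean on the full stability theorem, I would give a direct argument that mirrors its proof. Set $d=q_1-q_2$, $W=w(q_1)-w(q_2)$ and $V=v(q_1)-v(q_2)$. From \eqref{eq:traceII} with the same $g$ and $\mathcal G$ we get $d\,g=S[q_1]-S[q_2]$. From \eqref{eq:IEw}, \eqref{eq:IEv} we get $W=-K[dw_1]-K[q_2W]$, and similarly for $V$. Lemma~\ref{lem:ops}(b),(c) and the Gronwall argument of Theorem~\ref{thm:stab} bound $\|V\|(t)$, $\|W\|(t)$ and $\|\nabla_{x'}W\|(t)$ by $C\int_0^t(t-\tau)^{\alpha/2-1}\|d\|(\tau)\,d\tau$. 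The tangential-derivative term $a_{in}\partial_{x_i}W$ is what forces the order $\alpha/2$ instead of $\alpha$. Dividing by $g_0$ then yields $\|d\|(t)\le C\,(I^{\alpha/2}_{0+}\|d\|)(t)$.

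The main obstacle is the bound on $\nabla_{x'}W$. The difficulty is that $\nabla_x K[q_2W]$ involves $W$ itself rather than its gradient, which is consistent with Lemma~\ref{lem:ops}(c). I would close the estimate by coupling $\|W\|$ and $\|\nabla W\|$ in a single Gronwall step. Applying Lemma~\ref{lem:gronwall} with $\alpha$ replaced by $\alpha/2$ and $A=0$ (the iteration proof is unchanged) then gives $d\equiv0$.
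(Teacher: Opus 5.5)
Your argument is correct and is exactly the paper's proof: take $R=\max\{\|q_1\|_0,\|q_2\|_0\}$ so that $q_1,q_2\in Q_R$, apply Theorem~\ref{thm:A} with identical data so that every term of $\varrho$ vanishes, and conclude $\|q_1-q_2\|_0\le C_*\cdot 0=0$. Your optional direct argument just replays Steps 1--3 of the proof of Theorem~\ref{thm:A}; there it is cleaner to substitute $e\le (A_*/g_0)\eta$ into the $\eta$-inequality and apply Lemma~\ref{lem:gr} once with $A=0$, rather than first bounding $\eta$ pointwise by $I^{\alpha/2}_{0+}e$, which would need a resolvent-type iteration.
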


\begin{theorem}[Local existence in the general case]\label{thm:Dgen}
Let Assumption~\ref{ass:S} hold, let $\ell\le\gamma$ \textup{(}$\gamma$ the H\"{o}lder exponent of the coefficients\textup{)}, and
assume in addition\\
(D4) ${}^CD_t g$ extends from $\mathbb{R}^{n-1}\times(0,T]$ to a function in $ C^{\ell,0}_b(\mathbb{R}^{n-1}\times[0,T])$;\\
(D5) $\partial_{x_n}\varphi\in H^{2+\ell}(\mathbb{R}^n)$.
 
Then there is $T^*\in(0,T]$, depending only on the data and on the constants of Lemmas~\ref{lem:ops} and \ref{lem:gradK}
\textup{(}see \eqref{eq:TstarGen}\textup{)}, such that the inverse problem has a solution $q\in C^{\ell,0}_b(\mathbb{R}^{n-1}\times[0,T^*])$.
By Theorem~\ref{thm:B} this solution is unique.
\end{theorem}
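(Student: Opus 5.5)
We construct the solution by a contraction argument for the fixed-point equation \eqref{eq:IEq}, $q=\mathcal T[q]:=q_0-g^{-1}\mathcal B[q]$, in a closed ball of $C^{\ell,0}_b(\mathbb{R}^{n-1}\times[0,T^*])$. By Proposition~\ref{prop:system}, any fixed point in this space is a solution of the inverse problem on $[0,T^*]$. Uniqueness then follows from Theorem~\ref{thm:B}, applied on $[0,T^*]$.

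\emph{Step 1: $q_0$ lies in $C^{\ell,0}_b$.} The term $f(x',0,t)/g$ is covered by (D2)--(D3) and \eqref{eq:g0}. The term ${}^CD_tg/g$ is covered by (D4), and $L'g/g$ by (D3) together with $\ell\le\gamma$, since the coefficients are $\gamma$-H\"older and products of H\"older functions are H\"older. The term $a_{nn}v_0|_{x_n=0}$ is covered by Lemma~\ref{lem:ops}(a),(b). For $\partial_{x_i}w_0=\partial_{x_i}Z_0\partial_{x_n}\varphi+\partial_{x_i}K[\partial_{x_n}f]$ we need a H\"older bound on gradients. For the $Z_0$ part we use (D5): $\partial_{x_i}Z_0\psi$ with $\psi=\partial_{x_n}\varphi\in H^{2+\ell}$ can be written as $Z_0\psi$ plus correction, or treated as a regular solution of a problem with Lipschitz data whose gradient is H\"older. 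For the $K$ part we need the H\"older estimate for $\nabla_xK[G]$ supplied by Lemma~\ref{lem:gradK}, of the form
\[
[\nabla_xK[G](\cdot,t)]_{\ell}\le c_5\int_0^t(t-\tau)^{\alpha(1-\ell)/2-1}\|G\|_\ell(\tau)\,d\tau .
\]
The point is that $\nabla K$ is H\"older provided the density $G$ is H\"older. This explains why the contraction must be run in the $C^{\ell,0}$ norm and not merely in the sup norm.

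\emph{Step 2: $\mathcal T$ maps a ball $\{\|q-q_0\|_\ell\le\varrho_0\}$ into itself.} For $\|q\|_\ell\le\|q_0\|_\ell+\varrho_0$, Theorem~\ref{thm:direct} gives $w,v$ with explicit bounds in $C^{\ell,0}_b$, uniform for $t\le T$, and gives $|\nabla w|$ bounded via (iv). Then Lemma~\ref{lem:ops}(b) yields $\|K[qv]\|_\ell\le C\,t^{\alpha(1-\ell/2)}$. Lemma~\ref{lem:gradK} applied to $G=qw$, whose H\"older norm is controlled by $\|q\|_\ell\|w\|_\ell$, gives $\|\nabla K[qw]\|_\ell\le C\,t^{\alpha(1-\ell)/2}$. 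Multiplying by $a_{in}/g$ and $b_n/g$ costs only fixed constants, again because $\ell\le\gamma$. Consequently
\[
\|\mathcal T q-q_0\|_\ell\le C_1\,(T^*)^{\alpha(1-\ell)/2},
\]
and choosing $T^*$ small makes this at most $\varrho_0$.

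\emph{Step 3: contraction.} For $q,\tilde q$ in the ball, Theorem~\ref{thm:stab} gives $\|w-\tilde w\|_\ell+\|v-\tilde v\|_\ell\le C\|q-\tilde q\|_0$. Writing $qv-\tilde q\tilde v=(q-\tilde q)v+\tilde q(v-\tilde v)$, and similarly for $w$, and applying the same estimates, we obtain
\[
\|\mathcal Tq-\mathcal T\tilde q\|_\ell\le C_2(T^*)^{\alpha(1-\ell)/2}\|q-\tilde q\|_\ell .
\]
Shrinking $T^*$ so that $C_2(T^*)^{\alpha(1-\ell)/2}<1$ gives a contraction, and Banach's theorem provides the fixed point; this determines \eqref{eq:TstarGen}.

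\emph{Main obstacle.} The hard part is the H\"older estimate of $\nabla_x K[G]$ and of $\nabla_x Z_0\psi$ with H\"older density. Lemma~\ref{lem:unified} covers only $|m|\le1$, and $D^2Z$ is not integrable. The approach would be the segment argument of Lemma~\ref{lem:ops}(a), now with $D^2Y$ paired against $G(\xi)-G(x)$. This uses $\int|D^2_xY|\,|x-\xi|^\ell\,d\xi\le Cs^{\alpha-1+\alpha(\ell-2)/2}$, which needs second-derivative bounds of the form of Lemma~\ref{lem:unified} (the $\Psi_N$ entries of Table~\ref{tab:sing}, together with $V_Y$ from \cite{EK}). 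It also needs the cancellation $\int\nabla_xY\,d\xi=0$, obtained by differentiating \eqref{eq:norm}. Interpolating between the regimes $h<s^{\alpha/2}$ and $h\ge s^{\alpha/2}$ as in Lemma~\ref{lem:ops}(b) should then give the exponent $\alpha(1-\ell)/2-1$, which is integrable in $s$.
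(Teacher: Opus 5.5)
Your proof is correct, but it follows a different route from the paper's. Your main misconception is about Lemma~\ref{lem:gradK}(b). That lemma bounds $[\nabla_xK[G](\cdot,t)]_\ell$ by $c_5\int_0^t(t-\tau)^{\alpha(1-\ell)/2-1}\|G\|(\tau)\,d\tau$, which involves only the \emph{sup-norm} of $G$, not $\|G\|_\ell$. The reason is that $D^2_xY$, unlike $D^2_xZ$, is integrable in $\xi$, with $\int|D^2_xY|\,d\xi\le Cs^{-1}$. Interpolating this with $\int|\nabla_xY|\,d\xi\le Cs^{\alpha/2-1}$ gives the H\"older bound without any cancellation and without H\"older continuity of the density. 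So your ``main obstacle'' paragraph is unnecessary.

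Your claim that the contraction must be run in the $C^{\ell,0}$ norm is also wrong. The paper contracts in $\|\cdot\|_0$ on the ball $\{\|q-q_0\|_\ell\le\rho\}$:
\begin{itemize}
\item for invariance it uses only $\|qw\|_0\le RW$ and $\|qv\|_0\le RV$;
\item for the contraction it uses the quantity $\eta=\|d_w\|+\|\nabla_xd_w\|+\|d_v\|$ from the stability proof together with the Gronwall inequality of order $\alpha/2$;
\item for completeness of the ball in the sup metric it uses lower semicontinuity of $[\cdot]_\ell$.
\end{itemize}

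Your route, a contraction in the full $C^{\ell,0}$ norm, also works. It is valid even with your H\"older-density version of the lemma, via \eqref{eq:stabl}. With the sup-norm version it is simpler still. Theorem~\ref{thm:stab} gives $\|v-\tilde v\|_0+\|w-\tilde w\|_0\le C\|q-\tilde q\|_0$, hence $\|qv-\tilde q\tilde v\|_0+\|qw-\tilde q\tilde w\|_0\le C'\|q-\tilde q\|_0$. Then \eqref{eq:KGl} applied to $\mathcal Tq-\mathcal T\tilde q=-g^{-1}(\mathcal B[q]-\mathcal B[\tilde q])$ yields
\[
\|\mathcal Tq-\mathcal T\tilde q\|_\ell\le C(\omega_1+\omega_2)(T^*)\,\|q-\tilde q\|_0 .
\]
This needs no Gronwall argument for $\nabla_xd_w$, and completeness comes for free from the Banach space. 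The paper's version buys reuse of the stability machinery, and it makes visible that H\"older bounds enter only through the invariance of the ball (Remark~\ref{rem:structure}).

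One step you should tighten is $\partial_{x_i}Z_0\partial_{x_n}\varphi$ in Step~1. Lemma~\ref{lem:ops}(c) gives only boundedness of this term. What you need is the identity $Z_0\psi=\psi+K[\mathcal L\psi]$ for $\psi\in H^{2+\ell}$ (Lemma~\ref{lem:Zsmooth}). It follows from the uniqueness class of Theorem~\ref{thm:EK}(iii) and uses $\ell\le\gamma$ to guarantee $\mathcal L\psi\in H^\ell$. Then $\nabla_xZ_0\psi=\nabla\psi+\nabla_xK[\mathcal L\psi]$ is handled by Lemma~\ref{lem:gradK}(b). Your phrase ``written as $Z_0\psi$ plus correction'' should read ``$\psi$ plus correction''.
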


Theorem~\ref{thm:Dgen} requires neither (A3) nor any smallness of the data. Condition (D5) is used only to control the
tangential derivatives $\partial_{x_i}w$, $i<n$, in \eqref{eq:traceII}; for $n=1$, and under (A3) (Theorem~\ref{thm:D} below),
these derivatives do not occur and (D5) is not needed.

\begin{theorem}[The case (A3): explicit estimate]\label{thm:C}
Under the hypotheses of Theorem~\ref{thm:A} and (A3), with $a^*:=\sup a_{nn}$, $V:=\lambda_v E_{\alpha}(RT^\alpha)$,
\begin{equation}\label{eq:stabC}
\qquad\qquad\qquad\qquad  \|q-\tilde q\|_0\le \varepsilon_0+\frac{a^*}{g_0}\Bigl[\varepsilon_v+\frac{VT^\alpha}{\Gamma(1+\alpha)}\varepsilon_0\Bigr]
E_{\alpha}\Bigl(\bigl(\tfrac{a^*V}{g_0}+R\bigr)T^\alpha\Bigr),
\end{equation}
where $\varepsilon_0=g_0^{-1}\bigl(\sup|\mathcal G-\tilde{\mathcal G}|+R \|g-\tilde g\|_0\bigr)$ and
$\varepsilon_v=|\partial^2_{x_n}(\varphi-\tilde\varphi)|_0+\frac{T^\alpha}{\Gamma(1+\alpha)}\|\partial^2_{x_n}(f-\tilde f)\|_0$.
\end{theorem}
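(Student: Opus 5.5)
Under (A3) the trace condition \eqref{eq:traceA3} reads $qg=a_{nn}v|_{x_n=0}-\mathcal G$. Subtracting the analogous identity for the second data set and dividing by $g$ (using $|g|\ge g_0$), we write
\[
q-\tilde q=\frac{1}{g}\Bigl[-(\mathcal G-\tilde{\mathcal G})-\tilde q\,(g-\tilde g)+a_{nn}\,(v-\tilde v)|_{x_n=0}\Bigr].
\]
Here $\tilde v$ is the regular solution of \eqref{eq:Pv} built from $(\tilde\varphi,\tilde f,\tilde q)$. The identity uses $qg-\tilde q\tilde g=(q-\tilde q)g+\tilde q(g-\tilde g)$. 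With $\mu(t):=\|q-\tilde q\|(t)$ and $\|\tilde q\|_0\le R$ this gives, pointwise in $t$,
\[
\mu(t)\le\varepsilon_0+\frac{a^*}{g_0}\,\|v-\tilde v\|(t),
\]
where $\|\cdot\|(t)$ denotes the time-dependent sup norm.

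Next we estimate $d=v-\tilde v$. We use the time-dependent form of the direct-problem integral equations rather than the global bound \eqref{eq:stab0}, since we need a quantity we can feed into a Gronwall argument. Subtracting the integral equations \eqref{eq:IEv} gives
\[
d=Z_0(\partial^2_{x_n}\varphi-\partial^2_{x_n}\tilde\varphi)+K[\partial^2_{x_n}(f-\tilde f)]-K[(q-\tilde q)v]-K[\tilde q d].
\]
By Lemma~\ref{lem:ops}(a),(b) and $\|v\|\le\lambda_vE_\alpha(RT^\alpha)=V$ (from \eqref{eq:supbound} with $P\le R$), we get
\[
\|d\|(t)\le\varepsilon_v+V\,(I^\alpha_{0+}\mu)(t)+R\,(I^\alpha_{0+}\|d\|)(t).
\]

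The two inequalities are then coupled. Substituting the first bound for $\mu$ inside $I^\alpha_{0+}$ and using $I^\alpha_{0+}\varepsilon_0\le\varepsilon_0T^\alpha/\Gamma(1+\alpha)$ gives
\[
\|d\|(t)\le\Bigl[\varepsilon_v+\frac{VT^\alpha}{\Gamma(1+\alpha)}\varepsilon_0\Bigr]+\Bigl(\frac{a^*V}{g_0}+R\Bigr)(I^\alpha_{0+}\|d\|)(t).
\]
Lemma~\ref{lem:gronwall} then gives
\[
\|d\|(t)\le\Bigl[\varepsilon_v+\frac{VT^\alpha}{\Gamma(1+\alpha)}\varepsilon_0\Bigr]E_\alpha\Bigl(\bigl(\tfrac{a^*V}{g_0}+R\bigr)t^\alpha\Bigr).
\]
Inserting this into $\mu\le\varepsilon_0+(a^*/g_0)\|d\|$ and taking the supremum over $t$ (with $E_\alpha$ increasing) yields \eqref{eq:stabC}.

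The main point requiring care is the first step: $v-\tilde v$ must be estimated on $\{x_n=0\}$ by its full sup norm in $\mathbb{R}^n$, and $q-\tilde q$ enters the $v$-equation only through $K$. This is what lets the coupling close with a Gronwall argument and no smallness assumption. Boundedness and measurability of $\mu$ and $\|d\|$, needed for Lemma~\ref{lem:gronwall}, follow from the continuity and boundedness of $q,\tilde q,v,\tilde v$. One also checks that $\mathcal G-\tilde{\mathcal G}$ is bounded, which holds because both inverse problems are solvable (see the remark after \eqref{eq:traceII}).
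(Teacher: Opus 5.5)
Your proof is correct and follows the paper's argument essentially step for step. The paper likewise combines the pointwise-in-time bound $e(t)\le\varepsilon_0+(a^*/g_0)\|d_v\|(t)$ from Step~1 of the proof of Theorem~\ref{thm:A} with the estimate \eqref{eq:s3} for $d_v$, substitutes the former into the latter, and closes with the Gronwall inequality of order $\alpha$ (Lemma~\ref{lem:gr} with $\beta=\alpha$).
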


\begin{theorem}[Local existence in the case (A3)]\label{thm:D}
Let Assumption~\ref{ass:S},  (A3) and (D4) hold, let $\ell\le\gamma$ \textup{(}$\gamma$ the H\"{o}lder exponent of the coefficients\textup{)}

Then there is $T^*\in(0,T]$, depending only on the data and on the constants of Lemma~\ref{lem:ops} \textup{(}see \eqref{eq:Tstar}\textup{)}, such
that the inverse problem has a solution $q\in C^{\ell,0}_b(\mathbb{R}^{n-1}\times[0,T^*])$. By Theorem~\ref{thm:B} this solution is unique.
\end{theorem}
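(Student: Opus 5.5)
We prove the local existence in the case (A3), Theorem~\ref{thm:D}, by a contraction argument for the single integral equation \eqref{eq:IEqA3}, coupled with \eqref{eq:IEv}. By Proposition~\ref{prop:system}, it suffices to find, on some interval $[0,T^*]$, a function $q\in C^{\ell,0}_b(\mathbb{R}^{n-1}\times[0,T^*])$ with $q=\mathcal A[q]$, where
\[
\mathcal A[q]:=q_0-\frac{a_{nn}}{g}\,K[qv(q)](x',0,t),\qquad q_0=\frac1g\bigl\{-\mathcal G+a_{nn}v_0(x',0,t)\bigr\},
\]
and $v(q)$ denotes the solution of \eqref{eq:IEv}.

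First I check that $q_0\in C^{\ell,0}_b$.
\begin{itemize}
\item By (D4), ${}^CD_tg$ extends to $C^{\ell,0}_b$; by (D3), $L'g\in C^{\ell,0}_b$, since the coefficients are $\gamma$-H\"older with $\ell\le\gamma$ and $\partial^\beta_{x'}g\in C^{\ell,0}_b$; and $f(\cdot,0,\cdot)$ is H\"older by (D2).
\item $v_0(\cdot,0,t)$ is H\"older by Lemma~\ref{lem:ops}(a),(b).
\item $1/g$ is H\"older because $|g|\ge g_0$.
\end{itemize}
Put $\Lambda_0:=\|q_0\|_\ell$ and work on the closed set
\[
\mathcal Q:=\bigl\{q\in C^{\ell,0}_b(\mathbb{R}^{n-1}\times[0,T^*]):\ \|q\|_0\le 2\|q_0\|_0,\ \sup_t[q(\cdot,t)]_\ell\le 2\Lambda_0\bigr\},
\]
with the metric of $\|\cdot\|_0$. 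The H\"older bound is imposed only so that $v(q)$ is well defined and $\mathcal A[q]$ lies in the right space; it plays no role in the contraction estimate.

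Next I show that $\mathcal A$ maps $\mathcal Q$ into itself for small $T^*$. Set $R=2\|q_0\|_0$. Theorem~\ref{thm:direct}, applied with $p=q$, gives $\|v\|_0\le V=\lambda_vE_\alpha(RT^\alpha)$. Lemma~\ref{lem:ops}(b) then gives
\[
|K[qv]|\le RV\,t^\alpha/\Gamma(1+\alpha)
\quad\text{and}\quad
[K[qv](\cdot,t)]_\ell\le c_2RV\,t^{\alpha(1-\ell/2)}/(\alpha(1-\ell/2)).
\]
The restriction to $x_n=0$ does not increase the H\"older seminorm. Multiplying by $a_{nn}/g$, which is H\"older with a constant controlled by the data, we get
\[
\|\mathcal A[q]-q_0\|_\ell\le C_1\,(T^*)^{\alpha(1-\ell/2)},
\]
with $C_1$ depending only on the data, on $c_2$ and on $R$. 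Choosing $T^*$ so that $C_1(T^*)^{\alpha(1-\ell/2)}\le\min(\|q_0\|_0,\Lambda_0)$ gives $\mathcal A[q]\in\mathcal Q$.

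For the contraction, take $q,\tilde q\in\mathcal Q$ and write
\[
qv-\tilde q\tilde v=(q-\tilde q)v+\tilde q\,(v-\tilde v).
\]
Theorem~\ref{thm:stab} with the same data, so that $\psi=\tilde\psi$ and $F=\tilde F$, gives
\[
\|v-\tilde v\|_0\le \frac{T^\alpha}{\Gamma(1+\alpha)}\,V\,E_\alpha(RT^\alpha)\,\|q-\tilde q\|_0 .
\]
Hence
\[
\|\mathcal A[q]-\mathcal A[\tilde q]\|_0\le \frac{a^*}{g_0}\,\frac{(T^*)^\alpha}{\Gamma(1+\alpha)}\Bigl(V+RV\frac{T^\alpha E_\alpha(RT^\alpha)}{\Gamma(1+\alpha)}\Bigr)\|q-\tilde q\|_0 ,
\]
and I shrink $T^*$ further so that this factor is at most $1/2$. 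This fixes \eqref{eq:Tstar}.

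Finally, $\mathcal Q$ is complete in the sup-norm, because uniform limits preserve the uniform H\"older bound. Banach's fixed point theorem therefore gives $q\in\mathcal Q$ with $q=\mathcal A[q]$. By Proposition~\ref{prop:system} (note that $\varphi(x',0)=g(x',0)$ is assumed), $q$ solves the inverse problem on $[0,T^*]$, and uniqueness follows from Theorem~\ref{thm:B}.

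The main obstacle is the self-mapping step. One must keep the H\"older seminorm of $\mathcal A[q]$ controlled without the Lipschitz bound for $\partial_{x_i}w$, which is why (A3) is essential here. One must also control the H\"older constant of $K[qv]$ restricted to $\{x_n=0\}$. For the latter I rely on Lemma~\ref{lem:ops}(b), which needs only $\|qv\|(\tau)$ and not its H\"older norm; this is what makes the small factor $t^{\alpha(1-\ell/2)}$ available.
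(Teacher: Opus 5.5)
Your proposal is correct and is essentially the paper's proof. It uses the same map $q\mapsto q_0-\frac{a_{nn}}{g}K[qv(q)](x',0,t)$, invariance of a closed H\"older-bounded set via Lemma~\ref{lem:ops}(b) (which needs only the sup-norm of $qv$), contraction in the sup-norm via Theorem~\ref{thm:stab}, completeness by lower semicontinuity of the H\"older seminorm, and uniqueness from Theorem~\ref{thm:B}. The differences are cosmetic: the paper works on the ball $\|q-q_0\|_\ell\le\rho$ with $R=\|q_0\|_0+\rho$, and it bounds $\frac{a_{nn}}{g}\bigl(v(q)-v(\tilde q)\bigr)|_{x_n=0}$ directly by \eqref{eq:stab0}; this makes your splitting of $qv-\tilde q\tilde v$ unnecessary, since $K[qv(q)]-K[\tilde q\,v(\tilde q)]=v(\tilde q)-v(q)$.
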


\begin{remark}\label{rem:structure}
The proofs of Theorems~\ref{thm:A}, \ref{thm:B} and \ref{thm:C} use only three facts  from Sections 3--5: the integral representations \eqref{eq:IEw}, \eqref{eq:IEv}, the
sup-norm bounds of Lemma~\ref{lem:ops}(b), (c) (including $|\nabla_x K[G]|\le c_4\int_0^t(t-\tau)^{\alpha/2-1}\|G\|(\tau) d\tau$) and
the positivity/normalization Lemma~\ref{lem:norm}. All of them hold uniformly in the dimension (see Section 3); hence the stability and
uniqueness theorems are dimension-independent. The tangential derivatives $\partial_{x_i}w$ in \eqref{eq:traceII} cost only a
weaker time singularity $(t-\tau)^{\alpha/2-1}$, which is handled by a Gronwall inequality of order $\alpha/2$.
H\"{o}lder estimates are needed only for existence (Theorem~\ref{thm:Dgen}).
\end{remark}

\section*{7. Auxiliary results}\label{sec:aux}

\begin{lemma}[Gronwall inequality of order $\beta$]\label{lem:gr}
Let $0<\beta\le1$, let $\mu\ge0$ be bounded and measurable in  $(0,T]$, $A,B\ge0$, and
$\mu(t)\le A+B(I^\beta_{0+}\mu)(t)$ for $t\in(0,T]$. Then $\mu(t)\le A\,E_\beta(Bt^\beta)$.
\end{lemma}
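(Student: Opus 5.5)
This is the argument of Lemma~\ref{lem:gronwall} with $\alpha$ replaced by $\beta$. The only new point is that the hypothesis is assumed on $(0,T]$ instead of $[0,T]$, and that changes nothing. I would iterate the inequality and use two facts about the Riemann--Liouville integral $I^\beta_{0+}$. First, it is a positive operator: $\mu_1\le\mu_2$ pointwise implies $I^\beta_{0+}\mu_1\le I^\beta_{0+}\mu_2$, because its kernel $(t-\tau)^{\beta-1}/\Gamma(\beta)$ is nonnegative. Second, it satisfies the semigroup law on powers,
\[
I^\beta_{0+}\frac{\tau^{k\beta}}{\Gamma(1+k\beta)}=\frac{t^{(k+1)\beta}}{\Gamma(1+(k+1)\beta)},
\]
which is the Beta integral $\int_0^t(t-\tau)^{\beta-1}\tau^{k\beta}\,d\tau=B(\beta,1+k\beta)\,t^{(k+1)\beta}$.

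The first step is to set $\mu_0:=\sup_{(0,T]}\mu<\infty$, which is finite by boundedness. Since $\mu$ is bounded and measurable, $I^\beta_{0+}\mu$ is well defined and finite on $(0,T]$. The value of $\mu$ at $t=0$ plays no role, because the integral only sees $\tau\in(0,t)$.

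The second step is an induction on $K\ge1$ for the bound
\[
\mu(t)\le A\sum_{k=0}^{K-1}\frac{B^kt^{k\beta}}{\Gamma(1+k\beta)}+B^K\mu_0\frac{t^{K\beta}}{\Gamma(1+K\beta)},\qquad t\in(0,T].
\]
For $K=1$, use $I^\beta_{0+}\mu\le\mu_0\,t^\beta/\Gamma(1+\beta)$. For the step from $K$ to $K+1$, insert the bound for step $K$ into the right-hand side of the hypothesis. Positivity of $I^\beta_{0+}$ and the power formula term by term then give exactly the bound for step $K+1$; the constant $A$ contributes $A\,Bt^\beta/\Gamma(1+\beta)$ through $I^\beta_{0+}1=t^\beta/\Gamma(1+\beta)$.

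Finally, let $K\to\infty$. The remainder $\mu_0(BT^\beta)^K/\Gamma(1+K\beta)$ is a general term of the convergent series defining $E_\beta(BT^\beta)$, because $E_\beta$ is entire. Hence the remainder tends to $0$, and the partial sums increase to $A\,E_\beta(Bt^\beta)$.

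I expect no real obstacle. The only care needed is to use the boundedness of $\mu$ so that the remainder term is controlled; without it, the iteration could not start.
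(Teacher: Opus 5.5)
Your proof is correct and is the paper's argument: the paper just refers to the proof of Lemma~\ref{lem:gronwall} with $\alpha$ replaced by $\beta$. That proof iterates the inequality using the positivity of $I^\beta_{0+}$ and $I^{k\beta}_{0+}1=t^{k\beta}/\Gamma(1+k\beta)$, then lets the remainder $B^K\sup\mu\,t^{K\beta}/\Gamma(1+K\beta)$ tend to $0$, which is exactly your induction written out in more detail.
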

\textit{Proof.} Identical to that of Lemma~\ref{lem:gronwall} with $\alpha$ replaced by $\beta$.

\begin{lemma}\label{lem:comp}
For  measurable bounded  $\mu\ge0$ in  $(0,T]$,
\begin{equation*}
\qquad\qquad\qquad\qquad (I^\alpha{0+}\mu)(t)\le\kappa\,(I^\beta_{0+}\mu)(t),\qquad \kappa:=\frac{\Gamma(\alpha/2)}{\Gamma(\alpha)}\,T^{\alpha/2}.    
\end{equation*}
\end{lemma}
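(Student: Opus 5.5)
Here $\beta=\alpha/2$, as the value of $\kappa$ shows. The plan is to compare the two kernels pointwise and then integrate against $\mu\ge0$.

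Write both integrals with their kernels:
\[
(I^\alpha_{0+}\mu)(t)=\frac{1}{\Gamma(\alpha)}\int_0^t(t-\tau)^{\alpha-1}\mu(\tau)\,d\tau,\qquad
(I^{\alpha/2}_{0+}\mu)(t)=\frac{1}{\Gamma(\alpha/2)}\int_0^t(t-\tau)^{\alpha/2-1}\mu(\tau)\,d\tau .
\]
For $0<\tau<t\le T$ we have $(t-\tau)^{\alpha-1}=(t-\tau)^{\alpha/2}(t-\tau)^{\alpha/2-1}\le T^{\alpha/2}(t-\tau)^{\alpha/2-1}$, since $\alpha/2>0$ and $t-\tau\le T$.

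Because $\mu\ge0$, multiplying this inequality by $\mu(\tau)/\Gamma(\alpha)$ preserves it. Integrating over $(0,t)$ then gives
\[
(I^\alpha_{0+}\mu)(t)\le\frac{T^{\alpha/2}}{\Gamma(\alpha)}\int_0^t(t-\tau)^{\alpha/2-1}\mu(\tau)\,d\tau=\frac{\Gamma(\alpha/2)}{\Gamma(\alpha)}T^{\alpha/2}\,(I^{\alpha/2}_{0+}\mu)(t).
\]
This is exactly the claim with $\kappa=\Gamma(\alpha/2)T^{\alpha/2}/\Gamma(\alpha)$.

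There is no real obstacle. Both integrals are finite because $\mu$ is bounded and $\alpha/2-1>-1$. The only point to watch is that the nonnegativity of $\mu$ is what allows the pointwise kernel bound to be integrated.
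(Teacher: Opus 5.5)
Your proof is correct and is essentially the paper's own argument: it is the pointwise kernel bound $(t-\tau)^{\alpha-1}\le T^{\alpha/2}(t-\tau)^{\alpha/2-1}$, integrated against $\mu\ge0$ with $\beta=\alpha/2$. You also track the normalizing constants $\Gamma(\alpha)$, $\Gamma(\alpha/2)$ that produce $\kappa$, which the paper leaves implicit.
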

\textit{Proof.} $(t-\tau)^{\alpha-1}=(t-\tau)^{\alpha/2}(t-\tau)^{\alpha/2-1}\le T^{\alpha/2}(t-\tau)^{\alpha/2-1}$.

\begin{lemma}[Measurability]\label{lem:meas}

(a) If $G$ is continuous on $\mathbb{R}^N\times J$, $J\subset[0,T]$ an interval, then $t\mapsto \|G\|(t)$ is lower semicontinuous on $J$, hence
Borel measurable.

(b) If $G\in C_b(\mathbb{R}^N\times[0,T])$ and $\psi$ is bounded and Lipschitz, then $\nabla_x K[G]$ and $\nabla_xZ_0\psi$ are continuous on
$\mathbb{R}^N\times(0,T]$. Consequently, for $z=Z_0\psi+ K[G]$ the function $t\mapsto \|\nabla_xz\|(t)$ is lower semicontinuous on $(0,T]$.
 
\end{lemma}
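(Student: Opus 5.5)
For part (a), we show directly that $t\mapsto\|G\|(t)=\sup_{x}|G(x,t)|$ is lower semicontinuous on $J$. For each fixed $x$ the map $t\mapsto|G(x,t)|$ is continuous on $J$, because $G$ is jointly continuous. A pointwise supremum of a family of continuous functions is lower semicontinuous, since $\{t:\|G\|(t)>c\}=\bigcup_x\{t:|G(x,t)|>c\}$ is a union of relatively open sets. Lower semicontinuous functions are Borel measurable, so (a) follows.

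For part (b), the natural route is to reduce it to (a) applied to $\nabla_x z$ on $\mathbb{R}^N\times[\varepsilon,T]$ for each $\varepsilon>0$; taking the union over $\varepsilon$ then gives lower semicontinuity on $(0,T]$. It is therefore enough to prove that $\nabla_xZ_0\psi$ and $\nabla_xK[G]$ are jointly continuous on $\mathbb{R}^N\times(0,T]$.

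\emph{The term $Z_0\psi$.} Lemma~\ref{lem:ops}(c) gives the formula $\nabla_xZ_0\psi(x,t)=\int\nabla_xZ(t,x,\xi)[\psi(\xi)-\psi(x)]\,d\xi$. Substitute $\xi=x+t^{\alpha/2}y$, so the integrand becomes $t^{N\alpha/2}\nabla_xZ(t,x,x+t^{\alpha/2}y)[\psi(x+t^{\alpha/2}y)-\psi(x)]$. This is continuous in $(x,t)$ for each $y\neq0$, using the continuity of $\nabla_xZ$ off the diagonal from \cite{EK}. By Lemma~\ref{lem:unified} and the Lipschitz bound on $\psi$, it is dominated by $C\Lambda|y|\,\omega(|y|)$, which is integrable. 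Dominated convergence then yields continuity.

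\emph{The term $K[G]$.} Write $\nabla_xK[G](x,t)=\int_0^t\int\nabla_xY(t-\tau,x,\xi)G(\xi,\tau)\,d\xi\,d\tau$. Substitute $\tau=ts$ with $s\in(0,1)$ and $\xi=x+(t(1-s))^{\alpha/2}y$. The integrand is then continuous in $(x,t)$ for fixed $(s,y)$. By Lemma~\ref{lem:unified} it is dominated by $C\|G\|_0\,t^{\alpha/2}(1-s)^{\alpha/2-1}\omega(|y|)$, which is integrable on $(0,1)\times\mathbb{R}^N$, uniformly for $t$ in compact subsets of $(0,T]$. Dominated convergence again gives continuity.

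With both terms continuous, $\nabla_xz$ is continuous on $\mathbb{R}^N\times(0,T]$, and applying (a) finishes the proof.

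The main obstacle is justifying the pointwise continuity of $\nabla_xZ$ and $\nabla_xY$ in $(t,x)$ away from the diagonal, which is needed for dominated convergence. This comes from the parametrix construction in \cite{EK}, since $Z_0,Y_0$ are explicit $H$-functions and $V_Z,V_Y$ are continuous there. Once that is granted, the uniform majorant from Lemma~\ref{lem:unified} makes the rest routine.
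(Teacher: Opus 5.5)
Your proof is correct and follows the paper's argument. Part (a) uses the supremum of continuous functions, and part (b) uses dominated convergence, combining the off-diagonal continuity of $\nabla_xZ$, $\nabla_xY$ from \cite{EK} with the majorants of Lemma~\ref{lem:unified}. The only difference is the change of variables: your parabolic scalings $\xi=x+t^{\alpha/2}y$ and $\tau=ts$ give majorants independent of $(x,t)$ for both terms, whereas the paper substitutes $\xi=x+y$, $s=t-\tau$ with an indicator $\mathbf 1_{\{s<t\}}$, and for $Z_0\psi$ restricts to $t\ge t_0/2$.
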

\textit{Proof.} 
(a) $\|G\|(t)$ is a supremum of the continuous functions $t\mapsto |G(x,t)|$.

(b) We use that $\nabla_xY(s,x,\xi)$ and $\nabla_xZ(s,x,\xi)$ are continuous in $(s,x)$ for $s>0$, $x\ne\xi$ (this is part of the
construction in \cite{EK}). Substituting $\xi=x+y$, $s=t-\tau$,
\[
\qquad\qquad\qquad \nabla_x K[G](x,t)=\int_0^T\!\!\int_{\mathbb{R}^N}\mathbf 1_{\{s<t\}}\,\nabla_xY(s,x,x+y)\,G(x+y,t-s) d y d s.
\]
where $\mathbf 1_{\{s<t\}}$ is the indicator function of the set $s<t.$

For $y\ne0$ and $s\ne t_0$ the integrand is continuous in $(x,t)$ at $(x_0,t_0)$, and by Lemma~\ref{lem:unified} it is dominated by
$C \|G\|_0\,s^{\alpha-1-(N+1)\alpha/2}\omega(|y| s^{-\alpha/2})$, whose integral over $(0,T)\times\mathbb{R}^N$ is
$C \|G\|_0\int_0^Ts^{\alpha/2-1} ds<\infty$. Dominated convergence gives continuity. For $\nabla_xZ_0\psi$ we use
$\nabla_x Z_0\psi(x,t)=\int\nabla_xZ(t,x,x+y)[\psi(x+y)-\psi(x)] dy$ (see the proof of Lemma~\ref{lem:ops}(c)) and the majorant
$C\Lambda\,t^{-(N+1)\alpha/2} |y|\,\omega(|y|t^{-\alpha/2})$, which is uniformly integrable for $t\ge t_0/2>0$.

\begin{lemma}[A priori bounds on the conditional set]\label{lem:apriori}
Let (A1)--(A2), (D1), (D2) hold and $q\in Q_R$. Then the regular solutions $w=w(q)$, $v=v(q)$ satisfy
\[
\qquad\qquad\qquad |w|\le W:=\lambda_w E_{\alpha}(RT^\alpha),\qquad |v|\le V:=\lambda_v E_{\alpha}(RT^\alpha)\qquad\text{on }\overline\Pi_T,
\]
and $\nabla_xw$ is bounded on $\Pi_T$.
\end{lemma}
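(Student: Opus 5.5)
The lemma follows by applying Theorem~\ref{thm:direct} to the problems \eqref{eq:Pw} and \eqref{eq:Pv}, with $N=n$, $\mathcal L=L$ and coefficient $p(x,t)=q(x',t)$. Since $q\in Q_R$, we have $q\in C^{\ell,0}_b$, viewed as a function on $\mathbb{R}^n\times[0,T]$ that does not depend on $x_n$; H\"older continuity in $x$ uniformly in $t$ is preserved, so $p\in C^{\ell,0}_b(\overline\Pi_T)$ and $P=\|p\|_0=\|q\|_0\le R$. For \eqref{eq:Pw} the data are $\psi=\partial_{x_n}\varphi$ and $F=\partial_{x_n}f$. By (D1) and (D2), $\psi\in H^{1+\ell}\subset H^\ell$ and $F\in C^{\ell,0}_b$. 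The constant $\lambda_0$ of Section 4 is therefore exactly $\lambda_w$. For \eqref{eq:Pv} the data are $\partial^2_{x_n}\varphi\in H^{\ell}$ and $\partial^2_{x_n}f\in C^{\ell,0}_b$, and the constant $\lambda_0$ equals $\lambda_v$.

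\eqref{eq:supbound} then gives $|w(x,t)|\le\lambda_w E_\alpha(Pt^\alpha)$. The Mittag-Leffler function $E_\alpha$ has nonnegative coefficients, so it is nondecreasing on $[0,\infty)$. Since $P\le R$ and $t\le T$, this yields $|w|\le\lambda_wE_\alpha(RT^\alpha)=W$. The same argument gives $|v|\le V$. These bounds hold on all of $\overline\Pi_T$, because $w$ and $v$ lie in $C_b(\mathbb{R}^n\times[0,T])$ and \eqref{eq:supbound} holds for every $t\in[0,T]$; at $t=0$ it reduces to $|\psi|_0\le\lambda_0$.

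For the gradient we use Theorem~\ref{thm:direct}(iv). By (D1), $\nabla\partial_{x_n}\varphi\in H^{\ell}$ is bounded, so $\partial_{x_n}\varphi$ is Lipschitz with constant $\Lambda=|\nabla\partial_{x_n}\varphi|_0$. Theorem~\ref{thm:direct}(iv) therefore gives
\[
|\nabla_xw|\le c_3\Lambda+\tfrac{2c_4}{\alpha}T^{\alpha/2}\bigl(\|\partial_{x_n}f\|_0+RW\bigr)
\]
on $\Pi_T$. Here we have used the bound with $PM$ replaced by the larger quantity $RW$, which is allowed because $M=\lambda_wE_\alpha(PT^\alpha)\le W$. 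Part (iv) rests on Lemma~\ref{lem:ops}(c), which requires $t>0$; this is why the gradient bound is stated on $\Pi_T$ rather than on its closure. The only point needing care is checking that the $x'$-dependent coefficient $q$ qualifies as an admissible $p$ in $C^{\ell,0}_b(\mathbb{R}^n\times[0,T])$, and that is immediate.
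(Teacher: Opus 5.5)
Your proposal is correct and follows the paper's own proof. You obtain the sup bounds from Theorem~\ref{thm:direct}(i) with $P=\|q\|_0\le R$ and the monotonicity of $E_\alpha$, and the gradient bound from Theorem~\ref{thm:direct}(iv), using that $\partial_{x_n}\varphi$ is Lipschitz by (D1).
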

\textit{Proof.} Theorem~\ref{thm:direct}(i),(iv) with $P=\|q\|_0\le R$ and the monotonicity of $E_{\alpha}$; $\partial_{x_n}\varphi$ is Lipschitz by  (D1).

\section*{8. Proofs of Theorems~\ref{thm:A} and ~\ref{thm:B}}\label{sec:stab}

\textit{Proof.} {\it Proof of Theorem~\ref{thm:A}}.  Let $(w,v)$, $(\tilde w,\tilde v)$ be the solutions of \eqref{eq:IEw}, \eqref{eq:IEv} corresponding to $(q,\varphi,f)$ and $(\tilde q,\tilde\varphi,\tilde f)$,
and put
\[
e(t)=\|q-\tilde q\|(t),\quad d_w=w-\tilde w,\quad d_v=v-\tilde v,\quad
\eta(t)=\|d_w\|(t)+\|\nabla_xd_w\|(t)+\|d_v\|(t)\ \ (0<t\le T).
\]
By Lemmas~\ref{lem:meas} and \ref{lem:apriori}, $e$ and $\eta$ are bounded and measurable on $(0,T]$.

\smallskip\noindent\emph{Step 1: the equation for $q$ controls $e$ by $\eta$ pointwise in time.}
Subtracting the identities \eqref{eq:traceII} for the two problems and writing $qg-\tilde q\tilde g=(q-\tilde q)g+\tilde q(g-\tilde g)$,
\[
\qquad\qquad\qquad\qquad (q-\tilde q)\,g=\bigl(S[q]-\tilde S[\tilde q]\bigr)-(\mathcal G-\tilde{\mathcal G})-\tilde q\,(g-\tilde g),
\]
where
$S[q]-\tilde S[\tilde q]=\bigl[a_{nn}d_v+2\sum_{i<n}a_{in}\partial_{x_i}d_w+b_nd_w\bigr]_{x_n=0}$. By \eqref{eq:g0} and $\|\tilde q\|_0\le R$, we have
\begin{equation}\label{eq:s1}
\qquad\qquad\qquad e(t)\le\varepsilon_0+\frac{A_*}{g_0}\,\eta(t),\qquad \varepsilon_0:=\frac{1}{g_0}\Bigl(\sup |\mathcal G-\tilde{\mathcal G}|+R\,\|g-\tilde g\|_0\Bigr).
\end{equation}

\smallskip\noindent\emph{Step 2: the direct problems \eqref{eq:Pw}, \eqref{eq:Pv} control
$\eta$ by integrals of $e$ and $\eta$.}
Subtracting the equations \eqref{eq:IEw} for the two problems and using
$qw-\tilde q\tilde w=(q-\tilde q)w+\tilde q\,d_w$, we obtain
\[
\qquad\qquad\qquad d_w=Z_0\,\partial_{x_n}(\varphi-\tilde\varphi)+K[\partial_{x_n}(f-\tilde f)]-K[(q-\tilde q)w]-K[\tilde q\,d_w];
\]
in the same way, \eqref{eq:IEv} and $qv-\tilde q\tilde v=(q-\tilde q)v+\tilde q\,d_v$ give
\[
\qquad\qquad\qquad d_v=Z_0\,\partial^2_{x_n}(\varphi-\tilde\varphi)+K[\partial^2_{x_n}(f-\tilde f)]-K[(q-\tilde q)v]-K[\tilde q\,d_v].
\]
Here $|(q-\tilde q)(\xi',\tau)w(\xi,\tau)|\le W e(\tau)$ and
$|(q-\tilde q)(\xi',\tau)v(\xi,\tau)|\le V e(\tau)$
(for $W$ and $V$ see Lemma~\ref{lem:apriori}).  By Lemma~\ref{lem:ops}(a), (b),
\begin{align}
\qquad\qquad\qquad\qquad\qquad \|d_w\|(t)&\le\varepsilon_w+W\,(I^{\alpha}_{0+} e)(t)+R\,(I^{\alpha}_{0+}\|d_w\|)(t),\label{eq:s2}\\
\qquad\qquad\qquad\qquad\qquad \|d_v\|(t)&\le\varepsilon_v+V\,(I^{\alpha}_{0+} e)(t)+R\,(I^{\alpha}_{0+}\|d_v\|)(t),\label{eq:s3}
\end{align}
with
$\varepsilon_w=|\partial_{x_n}(\varphi-\tilde\varphi)|_0+\frac{T^\alpha}{\Gamma(1+\alpha)}\|\partial_{x_n}(f-\tilde f)\|_0$ and $\varepsilon_v$ as in
Theorem~\ref{thm:C}. By Lemma~\ref{lem:ops}(c) (the function $\partial_{x_n}(\varphi-\tilde\varphi)$ is Lipschitz with constant
$|\nabla\partial_{x_n}(\varphi-\tilde\varphi)|_0$) and $\int_0^t(t-\tau)^{\alpha/2-1}\mu(\tau) d\tau=\Gamma(\alpha/2)(I^{\beta}_{0+}\mu)(t)$,
\begin{equation}\label{eq:s4}
\qquad\qquad\qquad \|\nabla_xd_w\|(t)\le\varepsilon_w'+c_4\Gamma(\tfrac\alpha2)\Bigl[W(I^{\beta}_{0+} e)(t)+R(I^{\beta}_{0+}\|d_w\|)(t)\Bigr],
\end{equation}
$\varepsilon_w'=c_3|\nabla\partial_{x_n}(\varphi-\tilde\varphi)|_0+\frac{2c_4}{\alpha}T^{\alpha/2}\|\partial_{x_n}(f-\tilde f)\|_0$.
Adding \eqref{eq:s2}--\eqref{eq:s4} and applying Lemma~\ref{lem:comp} to the $I^{\alpha}_{0+}$-terms,
\begin{equation}\label{eq:s5}
\qquad\qquad\qquad\qquad\qquad \eta(t)\le\varepsilon_1+k_1(I^{\beta}_{0+} e)(t)+k_2(I^{\beta}_{0+}\eta)(t),
\end{equation}
\[
\varepsilon_1=\varepsilon_w+\varepsilon_w'+\varepsilon_v,\qquad
k_1=\kappa(W+V)+c_4\Gamma(\tfrac\alpha2)W,\qquad
k_2=R\bigl(\kappa+c_4\Gamma(\tfrac\alpha2)\bigr).
\]

\smallskip\noindent\emph{Step 3: closing the system.}
Apply $I^{\beta}_{0+}$ to \eqref{eq:s1}: $(I^{\beta}_{0+} e)(t)\le\varepsilon_0t^{\alpha/2}/\Gamma(1+\alpha/2)+(A_*/g_0)(I^{\beta}_{0+}\eta)(t)$. Inserting this into \eqref{eq:s5},
\[
\qquad\qquad\qquad \eta(t)\le\Bigl[\varepsilon_1+\frac{k_1T^{\alpha/2}}{\Gamma(1+\alpha/2)}\varepsilon_0\Bigr]+B\,(I^{\beta}_{0+}\eta)(t),\qquad
B:=\frac{k_1A_*}{g_0}+k_2 .
\]
Lemma~\ref{lem:gr} with $\beta=\alpha/2$ gives
$\eta(t)\le\bigl[\varepsilon_1+k_1T^{\alpha/2}\varepsilon_0/\Gamma(1+\alpha/2)\bigr]E_{\beta}(BT^{\alpha/2})$, and \eqref{eq:s1} yields
\begin{equation}\label{eq:final}
\qquad\qquad\qquad \|q-\tilde q\|_0\le\varepsilon_0+\frac{A_*}{g_0}\Bigl[\varepsilon_1+\frac{k_1T^{\alpha/2}}{\Gamma(1+\alpha/2)}\varepsilon_0\Bigr]E_{\beta}\bigl(BT^{\alpha/2}\bigr).
\end{equation}
Since $\varepsilon_0\le g_0^{-1}\max\{1,R\}\varrho$ and $\varepsilon_1\le c\,\varrho$ with
$c=1+\frac{T^\alpha}{\Gamma(1+\alpha)}+c_3+\frac{2c_4}{\alpha}T^{\alpha/2}$, \eqref{eq:final} implies \eqref{eq:stabA} with
\begin{equation}\label{eq:Cstar}
\qquad\qquad\qquad C_*=\frac{\max\{1,R\}}{g_0}+\frac{A_*}{g_0}\Bigl[c+\frac{k_1T^{\alpha/2}\max\{1,R\}}{g_0\Gamma(1+\alpha/2)}\Bigr]E_{\beta}\bigl(BT^{\alpha/2}\bigr).
\end{equation}

\textit{Proof. Proof of Theorem~\ref{thm:B}.}
Let $q_1,q_2$ be solutions with the same operator $L$ and the same data $(\varphi,f,g)$. Both belong to $C^{\ell,0}_b$, hence are bounded; take
$R=\max\{\|q_1\|_0, \|q_2\|_0\}$. Then $q_1,q_2\in Q_R$, all terms of $\varrho$ vanish, and Theorem~\ref{thm:A} gives $q_1=q_2$.

\section*{9. Local existence in the general case: proof of Theorem~\ref{thm:Dgen}}\label{sec:exist}

Without (A3) the right-hand side of \eqref{eq:IEq} contains the tangential derivatives $\partial_{x_i}w$, $i<n$, on $\{x_n=0\}$,
and to keep it in $C^{\ell,0}_b$ we need H\"{o}lder bounds for the gradients of $K[G]$ and $Z_0\psi$. We first obtain  them for an operator
$\mathcal L$ satisfying (A1) in $\mathbb{R}^N$.

\begin{lemma}\label{lem:gradK}
Let $\ell\in(0,1)$. \\
(a) There is a constant $C$, depending on $N,\alpha,T$ and (A1), such that $\int_{\mathbb{R}^N}|D^2_xY(s,x,\xi)| d\xi\le Cs^{-1}$ for
$0<s\le T$, $x\in\mathbb{R}^N$.\\
(b) There is a constant $c_5$, depending on $N,\alpha,\ell,T$ and (A1), such that for bounded continuous $G$ and $t\in(0,T]$
\[
\qquad\qquad [\nabla_x K[G](\cdot,t)]_{\ell}\le c_5\int_0^t(t-\tau)^{\alpha(1-\ell)/2-1} \|G\|(\tau)d\tau
\le\frac{2c_5\,t^{\alpha(1-\ell)/2}}{\alpha(1-\ell)}\sup_{\tau\le t} \|G\|(\tau).
\] 
\end{lemma}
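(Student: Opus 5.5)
For part (a) I would argue exactly as in Lemma~\ref{lem:unified}, now including the second-derivative column of Table~\ref{tab:sing}. By that table, $|D^2\Psi_N(r)|$ is $O(1)$ for $N=1$, $|\log r|$ for $N=2$, $r^{-1}$ for $N=3$, $r^{-2}$ for $N=4$ and $r^{2-N}$ for $N\ge5$. In every case this is $O(1+r^{1-N})$ for $r\le1$, and the exponential decay $\exp(-\sigma r^{2/(2-\alpha)})$ for $r\ge1$ comes from \cite[(3.7)--(3.9)]{EK}. For the correction $V_Y$ I would invoke the estimates \cite[Theorem (a)]{EK} for $|m|=2$. As in Table~\ref{tab:V}, these have the homogeneous form $t^{\alpha-1-(N+2)\alpha/2}$ times a power of $r$ of order at least $1-N$, with an additional gain $t^{\delta_N}$. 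This gives
\[
|D^2_xY(s,x,\xi)|\le C s^{\alpha-1-(N+2)\alpha/2}\,\omega(|x-\xi|s^{-\alpha/2}),
\]
and the substitution $\xi=x+s^{\alpha/2}y$ yields $\int|D^2_xY|\,d\xi\le Cs^{\alpha-1-\alpha}=Cs^{-1}$. The main point to verify is that the $|m|=2$ estimates for $V_Y$ in \cite{EK} really have singularity no worse than $r^{1-N}$ (they are of order $r^{2-N-2+(\gamma-\gamma_0)/2}$ for $N\ge5$, which is fine). For $N=4$ the restriction $\gamma_0<\gamma/2$ again guarantees a positive power of $s$.

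For part (b) I would copy the segment argument of Lemma~\ref{lem:ops}(b), one derivative higher. Fix $h=|x-\tilde x|$ and $s>0$. First, by Lemma~\ref{lem:int} with $|m|=1$, $\beta=0$,
\[
\int|\nabla_xY(s,x,\xi)-\nabla_xY(s,\tilde x,\xi)|\,d\xi\le 2Cs^{\alpha/2-1}.
\]
Second, writing the difference as $\int_0^1D^2_xY(s,x_\theta,\xi)(\tilde x-x)\,d\theta$ (valid for a.e.\ $\xi$) and using Fubini and part (a), the same integral is at most $Chs^{-1}$. Interpolating via $\min\{a,b\}\le a^{1-\ell}b^{\ell}$ gives
\[
C h^{\ell}s^{(\alpha/2-1)(1-\ell)-\ell}=Ch^{\ell}s^{\alpha(1-\ell)/2-1}.
\]
Since Lemma~\ref{lem:ops}(c) permits differentiation under the integral, I then have $|\nabla K[G](x,t)-\nabla K[G](\tilde x,t)|\le\int_0^t\int|\nabla_xY(t-\tau,x,\xi)-\nabla_xY(t-\tau,\tilde x,\xi)|\,\|G\|(\tau)\,d\xi\,d\tau$. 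Inserting the interpolated bound with $s=t-\tau$ gives the first inequality, and integrating $(t-\tau)^{\alpha(1-\ell)/2-1}$ gives the second, since $\int_0^t s^{\alpha(1-\ell)/2-1}ds=2t^{\alpha(1-\ell)/2}/(\alpha(1-\ell))$.

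The hardest step is (a), specifically confirming from \cite{EK} that the second-derivative bounds for both $Y_0$ and $V_Y$ fit the unified profile $\omega$. Everything else is the routine interpolation above. A minor point is the segment argument when $\xi$ lies on the segment, but this is a null set for $N\ge2$; for $N=1$ the kernels are bounded and smooth off the diagonal, so the same formula holds.
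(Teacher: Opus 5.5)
\textbf{Verdict.} Your argument follows the paper's route. Part (b) is exactly its segment, Fubini and interpolation proof. Part (a) is its homogeneity argument for $Y_0$ (using the $|D^2\Psi_N|$ column of Table~\ref{tab:sing}) plus a separate bound for the correction. There is one incorrect claim in (a), which is easily repaired.

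\textbf{The error.} The second-derivative estimate for the correction, \cite[Theorem (a)(vi)]{EK} ($N\ge2$), is
$|D^2_xV_Y|\le Cs^{\gamma_1-1}|x-\xi|^{-N+\gamma_2}e^{-\sigma\rho}$, where $\gamma_2>0$ is a small H\"older-type gain. In the variable $r$ this carries the factor $r^{-N+\gamma_2}$, which is \emph{not} $O(1+r^{1-N})$ as $r\to0$. Your own exponent $-N+(\gamma-\gamma_0)/2$ for $N\ge5$ already lies below $1-N$, so calling it "fine" contradicts the claim it was meant to check. Consequently the displayed majorant $Cs^{\alpha-1-(N+2)\alpha/2}\omega(|x-\xi|s^{-\alpha/2})$ for $D^2_xY$ is not justified.

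\textbf{The fix.} Only integrability is needed, and the exponent $-N+\gamma_2$ is strictly greater than $-N$. You can either:
\begin{itemize}
\item replace $\omega$ by the integrable profile $(1+r^{1-N}+r^{-N+\gamma_2})e^{-\sigma r^{2/(2-\alpha)}}$; or
\item integrate the correction bound directly, as the paper does, to get $\int|D^2_xV_Y|\,d\xi\le Cs^{\gamma_1-1+\alpha\gamma_2/2}\le Cs^{-1}$.
\end{itemize}
With either change, the conclusion $\int|D^2_xY|\,d\xi\le Cs^{-1}$ stands.

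\textbf{A side remark on $N=1$ in (b).} Smoothness of the kernel off the diagonal does not by itself justify the segment formula when $\xi\in[x,\tilde x]$. That set has length $h$, however, and on it you can simply use $|\partial_xY|\le Cs^{-1}$ (Lemma~\ref{lem:unified} with $N=1$). This contributes $Chs^{-1}$, the same order as the main term, so no regularity of the kernel across the diagonal is needed.
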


\textit{Proof.}
(a) By \eqref{eq:Y0} and the homogeneity used in Section 2,
\[
\qquad\qquad D^2_xY_0(s,x-\xi;\xi)=s^{\alpha-1-(N+2)\alpha/2}(\det A(\xi))^{-1/2}(D^2\Psi_{N,\xi})\bigl(s^{-\alpha/2}(x-\xi)\bigr)
\]
with
$\Psi_{N,\xi}(z)=\Psi_N\bigl(|A(\xi)^{-1/2}z|\bigr)$. By Table~\ref{tab:sing} (column $|D^2\Psi_N|$), the exponential decay
\cite[(2.11)]{EK} and the uniformity of the constants in $\xi$, $|D^2\Psi_{N,\xi}(z)|\le\Omega(|z|)$ with a function $\Omega$
independent of $\xi$ and integrable over $\mathbb{R}^N$. The substitution $\xi=x-s^{\alpha/2}z$ gives
\[
\qquad\qquad\qquad \int_{\mathbb{R}^N}|D^2_xY_0(s,x-\xi;\xi)| d\xi\le Cs^{\alpha-1-\alpha}\int_{\mathbb{R}^N}\Omega(|z|) d z=Cs^{-1}.
\]
For the correction, \cite[Theorem (a)(vi)]{EK}
($N\ge2$) gives $|D^2_xV_Y|\le Cs^{\gamma_1-1}|x-\xi|^{-N+\gamma_2}e^{-\sigma\rho}$, whose integral in $\xi$ is
$Cs^{\gamma_1-1+\alpha\gamma_2/2}\le Cs^{-1}$; for $N=1$, \cite[Theorem (a)(i)]{EK} with $m=2$ gives in the same way the bound
$Cs^{\gamma\alpha/2-1}\le Cs^{-1}$.

(b) By Lemma~\ref{lem:ops}(c), $\nabla_x K[G]$ may be computed under the integral sign. Let $h=|x-\tilde x|>0$ and $s>0$. For almost
every $\xi$ the segment $x_\theta=x+\theta(\tilde x-x)$, $\theta\in[0,1]$, avoids $\xi$, so
$\nabla_xY(s,x,\xi)-\nabla_xY(s,\tilde x,\xi)=-\int_0^1D^2_xY(s,x_\theta,\xi)(\tilde x-x) d\theta$. By Fubini's theorem and (a),
$\int|\nabla_xY(s,x,\xi)-\nabla_xY(s,\tilde x,\xi)| d\xi\le Chs^{-1}$, while Lemma~\ref{lem:int} \textup{(}$  |m|=1$, $\beta=0$\textup{)}
bounds the same integral by $2Cs^{\alpha/2-1}$. Interpolating, $\min\{a,b\}\le a^{1-\ell}b^{\ell}$, we get
\[
\qquad\qquad \int_{\mathbb{R}^N}|\nabla_xY(s,x,\xi)-\nabla_xY(s,\tilde x,\xi)| d\xi\le c_5\,h^{\ell}s^{(\alpha/2-1)(1-\ell)-\ell}=c_5\,h^{\ell}s^{\alpha(1-\ell)/2-1}.
\]
Integrating against $\|G\|(\tau)$ with $s=t-\tau$ gives the first inequality; the second follows from
$\int_0^t(t-\tau)^{\alpha(1-\ell)/2-1} d\tau=2t^{\alpha(1-\ell)/2}/(\alpha(1-\ell))$.

\begin{lemma}\label{lem:Zsmooth}
Let $\ell\le\gamma$ and $\psi\in H^{2+\ell}(\mathbb{R}^N)$. Then
\begin{equation}\label{eq:Zsmooth}
\qquad\qquad\qquad\qquad Z_0\psi=\psi+ K[\mathcal L\psi]\qquad\text{on }\qquad \mathbb{R}^N\times[0,T].
\end{equation}
Consequently $\nabla_xZ_0\psi$ is bounded and continuous on $\mathbb{R}^N\times[0,T]$, $\nabla_x Z_0\psi(\cdot,0)=\nabla\psi$, and for $t\in[0,T],$ we have
\[
|\nabla_xZ_0\psi(\cdot,t)|_0\le|\nabla\psi|_0+\frac{2c_4}{\alpha}t^{\alpha/2}|\mathcal L\psi|_0,\qquad
[\nabla_xZ_0\psi(\cdot,t)]_{\ell}\le [\nabla\psi]_{\ell}+\frac{2c_5\,t^{\alpha(1-\ell)/2}}{\alpha(1-\ell)}|\mathcal L\psi|_0 .
\]
\end{lemma}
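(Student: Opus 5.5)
The identity \eqref{eq:Zsmooth} is a uniqueness statement, and I will prove it with Theorem~\ref{thm:EK}(ii)--(iii). Put $z:=\psi+K[\mathcal L\psi]$. Since $\psi\in H^{2+\ell}$ and the coefficients of $\mathcal L$ are bounded and $\gamma$-H\"older with $\ell\le\gamma$, the function $G:=\mathcal L\psi$ is bounded, continuous and belongs to $H^{\ell}(\mathbb{R}^N)$. Here $G$ is independent of $t$, so it is H\"older in $x$ uniformly in $t$. By Theorem~\ref{thm:EK}(ii) with zero initial datum, $K[G]$ is a classical solution of ${}^CD_t y-\mathcal Ly=G$, $y(\cdot,0)=0$, and $K[G](x,\cdot)\in H^{\alpha+\lambda}_\nu$.

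Next I treat the time-independent part. The function $\psi$ satisfies ${}^CD_t\psi=0$ and $-\mathcal L\psi=-G$, and constants in $t$ lie in $H^{\alpha+\lambda}_\nu$, exactly as in the proof of Lemma~\ref{lem:norm}. Hence $z=\psi+K[G]$ is a bounded classical solution of ${}^CD_t z-\mathcal Lz=0$ with $z(\cdot,0)=\psi$. The function $Z_0\psi$ solves the same problem in the same class by Theorem~\ref{thm:EK}(ii), so the uniqueness statement Theorem~\ref{thm:EK}(iii) gives $Z_0\psi=z$ on $\mathbb{R}^N\times(0,T]$. At $t=0$ both sides equal $\psi$: for $Z_0\psi$ this is continuity up to $t=0$, and for the other side it follows from $|K[G]|\le|G|_0t^\alpha/\Gamma(1+\alpha)$.

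The gradient statements follow by differentiating \eqref{eq:Zsmooth}. We get $\nabla_xZ_0\psi=\nabla\psi+\nabla_xK[\mathcal L\psi]$.
\begin{itemize}
\item By Lemma~\ref{lem:ops}(c), $|\nabla_xK[G](x,t)|\le c_4\int_0^t(t-\tau)^{\alpha/2-1}d\tau\,|G|_0=\frac{2c_4}{\alpha}t^{\alpha/2}|\mathcal L\psi|_0$. This gives the sup bound.
\item The same bound shows $\nabla_xK[G]\to0$ uniformly as $t\to0$, so $\nabla_xZ_0\psi(\cdot,0)=\nabla\psi$.
\item Continuity on $\mathbb{R}^N\times(0,T]$ comes from Lemma~\ref{lem:meas}(b). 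Together with the uniform vanishing at $t=0$, this yields continuity on $\mathbb{R}^N\times[0,T]$.
\item The H\"older bound follows from $[\nabla\psi+\nabla_xK[G]]_\ell\le[\nabla\psi]_\ell+[\nabla_xK[G](\cdot,t)]_\ell$ and Lemma~\ref{lem:gradK}(b) applied with $\|G\|(\tau)\le|\mathcal L\psi|_0$.
\end{itemize}

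The main obstacle is verifying the hypotheses of the uniqueness theorem for $z$. Specifically, $z$ must be a classical solution, in particular $z(\cdot,t)\in C^2$ with $\mathcal L$ acting on both pieces. This rests on Theorem~\ref{thm:EK}(ii) for $K[G]$ and on the exact cancellation ${}^CD_t\psi=0$ together with linearity, which is precisely where $\ell\le\gamma$ is needed to make $G$ H\"older. The rest is a direct application of the lemmas already established.
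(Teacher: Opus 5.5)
Your proposal is correct and follows essentially the same route as the paper: you set $z=\psi+K[\mathcal L\psi]$ and place it, together with $Z_0\psi$, in the uniqueness class of Theorem~\ref{thm:EK}(iii) to obtain the identity, and then you read off the gradient statements from Lemma~\ref{lem:ops}(c), Lemma~\ref{lem:meas}(b) and Lemma~\ref{lem:gradK}(b). Your explicit check at $t=0$, via $|K[G]|\le|G|_0t^\alpha/\Gamma(1+\alpha)$, is a small detail that the paper leaves implicit.
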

\textit{Proof.}
Since the coefficients of $\mathcal L$ are bounded and H\"{o}lder continuous with exponent $\gamma\ge\ell$, they belong to $H^\ell(\mathbb{R}^N)$,
hence $\mathcal L\psi\in H^\ell(\mathbb{R}^N)$. By Theorem~\ref{thm:EK}(ii) with zero initial value, $ K[\mathcal L\psi]$ is a bounded classical
solution of ${}^CD_t z-\mathcal Lz=\mathcal L\psi$, $z(\cdot,0)=0$, and $ K[\mathcal L\psi](x,\cdot)\in H^{\alpha+\lambda}_\nu[0,T]$. The function
$\psi$ satisfies ${}^CD_t\psi=0$, and $\psi(x)\in H^{\alpha+\lambda}_\nu[0,T]$ as in the proof of
Lemma~\ref{lem:norm}. Hence $z=\psi+ K[\mathcal L\psi]$ is a bounded classical solution of ${}^CD_t z-\mathcal Lz=0$, $z(\cdot,0)=\psi$, in the
uniqueness class of Theorem~\ref{thm:EK}(iii). The same argument applies to $Z_0\psi$ by Theorem~\ref{thm:EK}(ii), which yields \eqref{eq:Zsmooth}.

By \eqref{eq:Zsmooth}, $\nabla_x Z_0\psi=\nabla\psi+\nabla_x K[\mathcal L\psi]$. The function $\nabla_x K[\mathcal L\psi]$ is continuous on
$\mathbb{R}^N\times(0,T]$ by Lemma~\ref{lem:meas}(b), and $|\nabla_x K[\mathcal L\psi](x,t)|\le\frac{2c_4}{\alpha}t^{\alpha/2} |\mathcal L\psi|_0$ by
Lemma~\ref{lem:ops}(c), so it extends continuously by $0$ to $t=0$. The H\"{o}lder bound follows from Lemma~\ref{lem:gradK}(b).

\textit{Proof.}[Proof of Theorem~\ref{thm:Dgen}]
For $n=1$ there are no tangential derivatives, and the proof below applies with all terms containing $\partial_{x_i}$, $i<n$, omitted.
We use the notation of \eqref{eq:IEq}. For $0<T^*\le T,$ we put $X^*=\mathbb{R}^{n-1}\times[0,T^*]$ and
\[
\qquad\qquad\qquad \omega_1(\tau)=\frac{\tau^\alpha}{\Gamma(1+\alpha)}+\frac{c_2\,\tau^{\alpha(1-\ell/2)}}{\alpha(1-\ell/2)},\qquad
\omega_2(\tau)=\frac{2c_4}{\alpha}\tau^{\alpha/2}+\frac{2c_5\,\tau^{\alpha(1-\ell)/2}}{\alpha(1-\ell)} .
\]
By Lemma~\ref{lem:ops}(b),(c) and Lemma~\ref{lem:gradK}(b), for bounded continuous $G$ on $\mathbb{R}^n\times[0,T^*]$,
\begin{equation}\label{eq:KGl}
\qquad\qquad\qquad\qquad \|K[G]\|_\ell\le\omega_1(T^*) \|G\|_0,\qquad \|\nabla_x K[G]\|_\ell\le\omega_2(T^*)\|G\|_0
\end{equation}
\textup{(}norms over $\mathbb{R}^n\times[0,T^*]$\textup{)}; moreover, $ K[G]$ and $\nabla_x K[G]$ are continuous on $\mathbb{R}^n\times[0,T^*]$ and vanish
at $t=0$ (Lemma~\ref{lem:meas}(b) and the sup-norm bounds of Lemma~\ref{lem:ops}(b),(c)).

\emph{$q_0\in C^{\ell,0}_{b}(\mathbb{R}^{n-1}\times[0,T])$.} By (D1), (D2) and Lemma~\ref{lem:ops}(a),(b), $v_0,w_0\in C^{\ell,0}_{b}(\overline\Pi_T)$. By (D5)
and Lemma~\ref{lem:Zsmooth} with $\mathcal L=L$, $\psi=\partial_{x_n}\varphi$, and by \eqref{eq:KGl} with $G=\partial_{x_n}f$, the derivatives
$\partial_{x_i}w_0=\partial_{x_i}Z_0\,\partial_{x_n}\varphi+\partial_{x_i} K[\partial_{x_n}f]$ belong to $C^{\ell,0}_{b}(\overline\Pi_T)$. Restrictions to
$\{x_n=0\}$ do not increase these norms. By (D2), (D3) and (D4), $f(x',0,t)$, $L'g$ and ${}^CD_t g$ belong to $C^{\ell,0}_{b}$. Since
$\ell\le\gamma$, the coefficients belong to $H^\ell(\mathbb{R}^{n-1})$; $C^{\ell,0}_{b}$ is an algebra with $\|ab\|_\ell\le\|a\|_\ell \|b\|_\ell$, and
$1/g\in C^{\ell,0}_{b}$ because $[1/g]_{\ell}\le [g]_{\ell}/g_0^2$. Hence $q_0\in C^{\ell,0}_{b}(\mathbb{R}^{n-1}\times[0,T])$.

\emph{The map.} For $q\in C^{\ell,0}_{b}(X^*)$ let $w=w(q)$, $v=v(q)$ be the regular solutions of \eqref{eq:Pw}, \eqref{eq:Pv} on $[0,T^*]$, i.e.\ the
solutions of \eqref{eq:IEw}, \eqref{eq:IEv}, and put
\[
\qquad\qquad\qquad \mathcal P(q):=q_0-\frac1g\,\mathcal B[q]=\frac1g\Bigl(S[q]-\mathcal G\Bigr)\qquad\text{on } \qquad X^*,
\]
with $S[q]$ and $\mathcal G$ as in \eqref{eq:traceII}, \eqref{eq:calG}. By Proposition~\ref{prop:system} (applied on $[0,T^*]$) and the
compatibility condition, a fixed point of $\mathcal P$ in $ C^{\ell,0}_{b}(X^*)$ is a solution of the inverse problem on $[0,T^*]$.

\emph{Invariance of a ball.} Fix $\rho>0$, let $R=\|q_0\|_0+\rho$, let $W,V$ be the constants of Lemma~\ref{lem:apriori} for this $R$, and
\[
\qquad\qquad\qquad\qquad\qquad \mathcal B_\rho=\{q\in C^{\ell,0}_{b}(X^*):\ \|q-q_0\|_\ell\le\rho\}.
\]
For $q\in\mathcal B_\rho$ we have $\|q_0\|\le R$, hence $|w|\le W$, $|v|\le V$ (Lemma~\ref{lem:apriori} on $[0,T^*]$), and
$\|qw\|_0\le RW$, $\|qv\|_0\le RV$. With
$m_\ell:=\|1/g\|_\ell\bigl(|a_{nn}|_\ell+2\sum_{i<n}|a_{in}|_\ell+|b_n|_\ell\bigr)$ (H\"{o}lder norms on $\mathbb{R}^{n-1}$), the
product rule in $ C^{\ell,0}_{b}$ and \eqref{eq:KGl} give
\[
\Bigl\|\frac1g\,\mathcal B[q]\Bigr\|_\ell\le m_\ell\max\bigl\{\| K[qv]\|_\ell,\ \|\nabla_x K[qw]\|_\ell,\ \| K[qw]\|_\ell\bigr\}
\le m_\ell R(V+W)\bigl(\omega_1(T^*)+\omega_2(T^*)\bigr),
\]
and $\mathcal B[q]$ is continuous on $X^*$. With $k_1,k_2$ the constants of \eqref{eq:s5} \textup{(}computed with the above $R$, $W$,
$V$\textup{)}, choose $T^*\in(0,T]$ such that
\begin{equation}\label{eq:TstarGen}
m_\ell R(V+W)\bigl(\omega_1(T^*)+\omega_2(T^*)\bigr)\le\rho,\qquad
\theta_*:=\frac{A_*}{g_0}\,\frac{k_1(T^*)^{\alpha/2}}{\Gamma(1+\alpha/2)}\,E_{\beta}\bigl(k_2T^{\alpha/2}\bigr)<1 .
\end{equation}
Then $\mathcal P(q)\in C^{\ell,0}_{b}(X^*)$ and $\|\mathcal P(q)-q_0\|_\ell\le\rho$, i.e.\ $\mathcal P(\mathcal B_\rho)\subset\mathcal B_\rho$.

\emph{Contraction in the sup-norm.} Let $q,\tilde q\in\mathcal B_\rho$, $\delta=\|q-\tilde q\|_0$, $d_w=w(q)-w(\tilde q)$,
$d_v=v(q)-v(\tilde q)$, $e(t)=\|q-\tilde q\|(t)\le\delta$ and $\eta(t)=\|d_w\|(t)+\|\nabla_xd_w\|(t)+\|d_v\|(t)$. Since the data of
\eqref{eq:Pw}, \eqref{eq:Pv} are the same for $q$ and $\tilde q$, $d_w=- K[(q-\tilde q)w(q)]- K[\tilde q\,d_w]$, and similarly for $d_v$.
Step~2 of Section 7 with $\varepsilon_w=\varepsilon_w'=\varepsilon_v=0$ gives
$\eta(t)\le k_1(I^{\beta}_{0+} e)(t)+k_2(I^{\beta}_{0+}\eta)(t)\le\frac{k_1\delta(T^*)^{\alpha/2}}{\Gamma(1+\alpha/2)}+k_2(I^{\beta}_{0+}\eta)(t)$ on $(0,T^*]$; $\eta$ is bounded and
measurable by Lemmas~\ref{lem:meas} and \ref{lem:apriori}. Lemma~\ref{lem:gr} with $\beta=\alpha/2$ yields
$\eta(t)\le\frac{k_1\delta(T^*)^{\alpha/2}}{\Gamma(1+\alpha/2)}E_{\beta}\bigl(k_2T^{\alpha/2}\bigr)$. Since
$\mathcal P(q)-\mathcal P(\tilde q)=\frac1g\bigl[a_{nn}d_v+2\sum_{i<n}a_{in}\partial_{x_i}d_w+b_nd_w\bigr]_{x_n=0}$, we obtain
$\|\mathcal P(q)-\mathcal P(\tilde q)\|_0\le\frac{A_*}{g_0}\sup_{t\le T^*}\eta(t)\le\theta_*\|q-\tilde q\|_0$.

\emph{Conclusion.} The set $\mathcal B_\rho$ with the metric $\|q-\tilde q\|_0$ is complete:
a Cauchy sequence $(q_k)\subset\mathcal B_\rho$ in the sup-norm converges uniformly to a
bounded continuous function $q$, and since both $\|\cdot\|_0$ and
$\sup_t[(\cdot)(\cdot,t)]_{\ell}$ are lower semicontinuous under pointwise convergence,
\[
\qquad \|q-q_0\|_\ell\le\liminf_{k\to\infty}\|q_k-q_0\|_0+\liminf_{k\to\infty}\sup_t[(q_k-q_0)(\cdot,t)]_{\ell}
\le\liminf_{k\to\infty}\|q_k-q_0\|_\ell\le\rho,
\]
i.e.\ $q\in\mathcal B_\rho$. By the contraction mapping principle  $\mathcal P$ has a unique fixed point $q\in\mathcal B_\rho$, which
solves the inverse problem on $[0,T^*]$; its uniqueness in the whole of $ C^{\ell,0}_{b}(X^*)$ follows from
Theorem~\ref{thm:B} with $T$ replaced by $T^*$.

\emph{Invariance of a ball.} Fix $\rho>0$, let $R=\|q_0\|_0+\rho$ and
\[
\qquad\qquad\qquad\qquad\qquad \mathcal B=\{q\in C^{\ell,0}_{b}(X^*):\ \|q-q_0\|_\ell\le\rho\}.
\]
For $q\in\mathcal B$ we have $\|q\|_0\le R$, so $|v(q)|\le V=\lambda_v E_{\alpha}(RT^\alpha)$ (Lemma~\ref{lem:apriori}). By
Lemma~\ref{lem:ops}(b), which uses only the sup-norm of the density $qv(q)$,
\[
\qquad\qquad\qquad\qquad \| K[qv(q)](\cdot,0,\cdot)\|_\ell\le RV\Bigl(\frac{(T^*)^\alpha}{\Gamma(1+\alpha)}+\frac{c_2(T^*)^{\alpha(1-\ell/2)}}{\alpha(1-\ell/2)}\Bigr).
\]
With $m_\ell:=\|a_{nn}/g\|_\ell$, $m_0:=\sup|a_{nn}/g|$ choose $T^*\in(0,T]$ such that
\begin{equation}\label{eq:Tstar}
m_\ell RV\Bigl(\frac{(T^*)^\alpha}{\Gamma(1+\alpha)}+\frac{c_2(T^*)^{\alpha(1-\ell/2)}}{\alpha(1-\ell/2)}\Bigr)\le\rho,\qquad
\theta:=m_0VE_{\alpha}(RT^\alpha)\frac{(T^*)^\alpha}{\Gamma(1+\alpha)}<1 .
\end{equation}
Then $\|\mathcal T(q)-q_0\|_\ell\le\rho$, i.e.\ $\mathcal T(\mathcal B)\subset\mathcal B$. (Continuity of $\mathcal T(q)$ up to $t=0$ follows from
that of $v(q)$, Theorem~\ref{thm:direct}.)

\section*{10. The case (A3)} 

\subsection*{10.1. Proof of Theorem~\ref{thm:C}}
Under (A3) the function $w$ does not enter \eqref{eq:traceII}, which reads $qg=a_{nn}v|_{x_n=0}-\mathcal G$ (Corollary~\ref{cor:A3}). Step 1
of Section 7 gives $e(t)\le\varepsilon_0+(a^*/g_0)\|d_v\|(t)$, and \eqref{eq:s3} becomes
\[
\qquad\qquad\qquad \|d_v\|(t)\le\varepsilon_v+\frac{VT^\alpha}{\Gamma(1+\alpha)}\varepsilon_0+\Bigl(\frac{a^*V}{g_0}+R\Bigr)(I^{\alpha}_{0+}\|d_v\|)(t).
\]
Applying Lemma~\ref{lem:gr} with $\beta=\alpha$ and using the bound for $e,$ we obtain  \eqref{eq:stabC}. \qed

\begin{remark}
For $L=\Delta$ ($a^*=1$) and $n=1,2,3$, \eqref{eq:stabC} is the stability estimate of \cite[Theorem 2]{DD21}, \cite{DRB21}, \cite{UD21},
with the constant $C_0\Gamma(\alpha)$ appearing there equal to $1$ (see Remark~\ref{rem:compare}) and with the Mittag-Leffler function in place of
the series obtained by successive approximation. Here the order of the Gronwall inequality is $\alpha$; without (A3) it is $\alpha/2$.
\end{remark}

\subsection*{10.2. Proof of Theorem~\ref{thm:D}}
Put
\[
q_0:=\frac{1}{g}\Bigl(a_{nn}\,v_0(x',0,t)-\mathcal G\Bigr)
=\frac1g\Bigl\{f(x',0,t)-{}^CD_t g+L'g+a_{nn}v_0(x',0,t)\Bigr\},
\qquad v_0=Z_0\,\partial^2_{x_n}\varphi+ K[\partial^2_{x_n}f].
\]
\emph{$q_0\in C^{\ell,0}_{b}(\mathbb{R}^{n-1}\times[0,T])$.} Since $\ell\le\gamma$ and the coefficients are bounded, they belong to $H^\ell$. By (D3),
(D3),  (D4) the functions $f(x',0,t)$, ${}^CD_t g$, $L'g$ belong to $ C^{\ell,0}_{b}$; $v_0\in C^{\ell,0}_{b}(\overline\Pi_T)$ by Lemma~\ref{lem:ops}(a),(b), and its
restriction to $\{x_n=0\}$ has no larger norm. $ C^{\ell,0}_{b}$ is an algebra, $\|ab\|_\ell\le \|a\|_\ell\|b\|_\ell$, and $1/g\in C^{\ell,0}_{b}$ because
$[1/g]_{\ell}\le [g]_{\ell}/g_0^2$.

\emph{The map.} For $0<T^*\le T$ let $X^*=\mathbb{R}^{n-1}\times[0,T^*]$. For $q\in C^{\ell,0}_{b}(X^*)$ let $v(q)$ be the regular solution of
\eqref{eq:Pv} on $[0,T^*]$ and
\[
\qquad\qquad\qquad \mathcal T(q):=\frac1g\Bigl(a_{nn}\,v(q)(x',0,t)-\mathcal G\Bigr)=q_0-\frac{a_{nn}}{g}\, K[q\,v(q)](x',0,t),
\]
where we used $v(q)=v_0- K[qv(q)]$ (see \eqref{eq:IEv}). By Corollary~\ref{cor:A3} and the compatibility condition, a fixed point of $\mathcal T$ in
$ C^{\ell,0}_{b}(X^*)$ is a solution of the inverse problem on $[0,T^*]$.

\emph{Contraction in the sup-norm.} For $q,\tilde q\in\mathcal B$, $\mathcal T(q)-\mathcal T(\tilde q)=\frac{a_{nn}}{g}\bigl(v(q)-v(\tilde q)\bigr)|_{x_n=0}$.
By Theorem~\ref{thm:stab}, \eqref{eq:stab0}, with equal initial data and right-hand sides,
$\|v(q)-v(\tilde q)\|_0\le\frac{(T^*)^\alpha}{\Gamma(1+\alpha)}V\|q-\tilde q\|_0 E_{\alpha}(RT^\alpha)$, so
$\|\mathcal T(q)-\mathcal T(\tilde q)\|_0\le\theta\|q-\tilde q\|_0$.

Since $\mathcal B$ is complete with respect to the metric $\|q-\tilde q\|_0$ (see the proof of
Theorem~\ref{thm:Dgen}), the contraction mapping principle gives a unique fixed point of
$\mathcal T$ in $\mathcal B$; uniqueness in the whole of $ C^{\ell,0}_{b}(X^*)$ follows from
Theorem~\ref{thm:B} with $T$ replaced by $T^*$. \qed

\section*{11. An example} 

We construct an explicit example of the inverse problem in dimension $n=2$, $x=(x_1,x_2)$, $x'=x_1$, for an operator that satisfies
(A1), (A2) but not (A3). It shows that all hypotheses of Theorems~\ref{thm:A}, \ref{thm:B} and \ref{thm:Dgen} can be
met, and that the terms with the mixed coefficient and the normal drift in the trace condition \eqref{eq:traceII} cannot be omitted.

\smallskip\noindent\emph{The operator.} Let $b\in\mathbb{R}$ and
\[
\qquad\qquad\qquad\qquad L=\frac{\partial^2}{\partial x_1^2}+\cos x_1\,\frac{\partial^2}{\partial x_1\partial x_2}+\frac{\partial^2}{\partial x_2^2}+b\,\frac{\partial}{\partial x_2},
\]
i.e.\ $a_{11}=a_{22}=1$, $a_{12}=a_{21}=\frac12\cos x_1$, $b_1=0$, $b_2=b$. The eigenvalues of $(a_{ij}(x_1))$ are $1\pm\frac12 |\cos x_1|\ge\frac12$,
so (A1) holds with $\delta=\frac12$ and any $\gamma\in(0,1)$; the coefficients depend only on $x_1$, so (A2) holds; (A3) fails because
$a_{12}\not\equiv0$ (and $b_2\ne0$ if $b\ne0$). Here $L'=\partial^2_{x_1}$.

\smallskip\noindent\emph{The data.} Fix $\ell\in(0,1)$ and an arbitrary $q^*\in C^{\ell,0}_{b}(\mathbb{R}\times[0,T])$, for instance
$q^*(x_1,t)=(1+t)\bigl(1+\frac12\sin x_1\bigr)$. Put
\[
m(t)=1+t^\alpha,\qquad X(x_1)=2+\sin x_1,\qquad Y(x_2)=2+\sin x_2,\qquad u^*(x,t)=m(t)X(x_1)Y(x_2).
\]
By \eqref{eq:caputo}, $I^{1-\alpha}_{0+}[t^\alpha]=\Gamma(1+\alpha)\,t$, hence ${}^CD_t m=\Gamma(1+\alpha)$. A direct computation gives
\[
Lu^*=m(t)\Bigl[-\sin x_1\,Y(x_2)+\cos^2x_1\cos x_2-X(x_1)\sin x_2+b\,X(x_1)\cos x_2\Bigr].
\]
We define the data of the inverse problem by
\begin{align*}
\qquad\qquad\qquad \varphi(x)&=u^*(x,0)=X(x_1)Y(x_2),\\
f(x,t)&={}^CD_t u^*-Lu^*+q^*u^*=\Gamma(1+\alpha)X(x_1)Y(x_2)+q^*(x_1,t)\,m(t)X(x_1)Y(x_2)\\
&\quad+m(t)\Bigl[\sin x_1\,Y(x_2)-\cos^2x_1\cos x_2+X(x_1)\bigl(\sin x_2-b\cos x_2\bigr)\Bigr],\\
g(x_1,t)&=u^*(x_1,0,t)=2\,m(t)X(x_1).
\end{align*}

\smallskip\noindent\emph{Verification of the hypotheses.} The functions $\varphi$, $X$, $Y$ are smooth with bounded derivatives of all orders, so
$\varphi\in H^{2+\ell}(\mathbb{R}^2)$ and $\partial_{x_2}\varphi\in H^{2+\ell}(\mathbb{R}^2)$, which is (D1) and (D5). The function $f$ and its
derivatives $\partial_{x_2}f$, $\partial^2_{x_2}f$ are bounded, continuous on $\mathbb{R}^2\times[0,T]$ and Lipschitz in $x$ uniformly in $t$, except for the
factor $q^*$, which belongs to $ C^{\ell,0}_{b}$; hence (D2) holds. Next, $g$ and $\partial_{x_1}g$, $\partial^2_{x_1}g$ belong to $ C^{\ell,0}_{b}(\mathbb{R}\times[0,T])$;
$I^{1-\alpha}_{0+}[g(x_1,\cdot)-g(x_1,0)]=2\Gamma(1+\alpha)X(x_1)\,t$ is $C^1$; and $t^\nu g(x_1,t)=2X(x_1)(t^\nu+t^{\nu+\alpha})$ is H\"{o}lder
continuous with exponent $\alpha+\lambda$, since $\nu>\alpha+\lambda$. Thus (D3) holds, and
\[
\qquad\qquad\qquad\qquad\qquad\qquad {}^CD_t g(x_1,t)=2\Gamma(1+\alpha)X(x_1)
\]
extends continuously to $t=0$, which is (D4). Finally, $\varphi(x_1,0)=2X(x_1)=g(x_1,0)$ and $|g|\ge2$, so \eqref{eq:g0} holds
with $g_0=2$. Hence Assumption~\ref{ass:S}, (D4) and (D5) are satisfied.

\smallskip\noindent\emph{The solution.} The function $u^*$ is bounded, H\"{o}lder continuous in $x$ uniformly in $t$ and smooth in $x$; moreover
$I^{1-\alpha}_{0+}[u^*(x,\cdot)-u^*(x,0)]=\Gamma(1+\alpha)X(x_1)Y(x_2)\,t$ and $t^\nu u^*(x,t)$ is H\"{o}lder continuous with exponent $\alpha+\lambda$.
Hence $u^*\in \mathcal{U}_T(\mathbb{R}^2)$, and by construction it satisfies \eqref{eq:main}, \eqref{eq:init} with $q=q^*$. By Theorem~\ref{thm:direct}(iii),
$u^*$ is the regular solution of the direct problem with coefficient $q^*$, and $u^*(x_1,0,t)=g(x_1,t)$. Therefore $q^*$ is a solution of
the inverse problem on $[0,T]$, and by Theorem~\ref{thm:B} it is the only solution in $ C^{\ell,0}_{b}(\mathbb{R}\times[0,T])$.

\smallskip\noindent\emph{The trace condition.} By Theorem~\ref{thm:diff},
\[
\qquad\qquad\qquad w=\partial_{x_2}u^*=m(t)X(x_1)\cos x_2,\qquad v=\partial^2_{x_2}u^*=-m(t)X(x_1)\sin x_2 .
\]
On $\{x_2=0\}$,
\[
\qquad\qquad\qquad\qquad v=0,\qquad w=m(t)X(x_1),\qquad \partial_{x_1}w=m(t)\cos x_1,
\]
so that
\[
\qquad\qquad S[q^*]=a_{22}v+2a_{12}\,\partial_{x_1}w+b_2w\Big|_{x_2=0}=m(t)\bigl(\cos^2x_1+b\,X(x_1)\bigr).
\]
On the other hand, by \eqref{eq:calG}, with $L'g=-2m(t)\sin x_1$,
\[
\qquad \mathcal G={}^CD_t g-L'g-f(x_1,0,t)=m(t)\bigl(\cos^2x_1+b\,X(x_1)\bigr)-2q^*(x_1,t)\,m(t)X(x_1).
\]
Consequently,
\[
\qquad\qquad\qquad\qquad\qquad \frac{S[q^*]-\mathcal G}{g}=\frac{2q^*m(t)X(x_1)}{2m(t)X(x_1)}=q^*(x_1,t),
\]
i.e.\ \eqref{eq:traceII} holds and $q^*$ is recovered exactly from the data $(\varphi,f,g)$.

\smallskip\noindent\emph{The role of the mixed coefficient and the normal drift.} In this example $v=0$ on $\{x_2=0\}$, so the whole
contribution of the normal derivatives to the trace condition comes from $\partial_{x_1}w$ and $w$, i.e.\ from $a_{12}$ and $b_2$. If these terms
were dropped, as is legitimate only under (A3) (cf.\ \eqref{eq:IEqA3}), the formula would give
\[
\qquad\qquad\qquad\hat q=\frac{a_{22}v|_{x_2=0}-\mathcal G}{g}=q^*-\frac{\cos^2x_1+b\,(2+\sin x_1)}{2\,(2+\sin x_1)}\ne q^*.
\]
Thus, without (A3), the tangential derivative $\partial_{x_1}w$ of the normal derivative and the trace of $w$ are genuinely needed, and the
equivalent system must contain the equation \eqref{eq:IEw} for $w$; this is the situation covered by Theorems~\ref{thm:A}, \ref{thm:B}
and \ref{thm:Dgen}, but not by Theorem~\ref{thm:D}.

\smallskip\noindent\emph{Stability.} Let $h\in C^{\ell,0}_{b}(\mathbb{R}\times[0,T])$, $\varepsilon>0$, and let $(\tilde\varphi,\tilde f,\tilde g)$ be the data constructed
as above with $q^*$ replaced by $\tilde q^*=q^*+\varepsilon h$ and the same $u^*$. Then $\tilde\varphi=\varphi$, $\tilde g=g$ and
$\tilde f-f=\varepsilon h\,u^*$; since $\mathcal G-\tilde{\mathcal G}=(\tilde f-f)(x_1,0,t)=\varepsilon h\,m(t)X(x_1)Y(0)$, in \eqref{eq:dist} we have
$$
\varrho=\sup |\mathcal G-\tilde{\mathcal G}|+\|\partial_{x_2}(f-\tilde f)\|_0+\|\partial^2_{x_2}(f-\tilde f)\|_0\qquad\qquad\qquad\qquad$$$$
\qquad\qquad\qquad\le\varepsilon \|h\|_0\sup(mX)\bigl(2+\sup |Y'|+\sup |Y''|\bigr)\le12(1+T^\alpha)\,\varepsilon \|h\|_0,
$$
while $\|q^*-\tilde q^*\|_0=\varepsilon \|h\|_0$. Both quantities are of order $\varepsilon$, in agreement with the Lipschitz estimate
\eqref{eq:stabA} of Theorem~\ref{thm:A}.

\begin{remark}
An analogous construction works in any dimension $n\ge2$: one takes
$u^*=m(t)\prod_{i=1}^{n}(2+\sin x_i)$, an operator $L$ with
coefficients depending on $x'$ only, an arbitrary $q^*\in C^{\ell,0}_{b}(\mathbb{R}^{n-1}\times[0,T])$, and defines $\varphi$, $f$, $g$ as above. For $n=1$,
$u^*=m(t)(2+\sin x)$ and $L=a\,\partial_x^2+b\,\partial_x$ give an example for the problem of \cite{DD21} with a drift.
\end{remark}

\section*{12. Discussion} 

\begin{enumerate} 
\item \emph{Where the dimension enters.} The pointwise behaviour of the kernels differs for $n=1,2,3,4$ and $n\ge5$
(Tables~\ref{tab:sing}, \ref{tab:V}): $Y$ is bounded near the diagonal only for $n\le3$, has a logarithmic singularity for $n=4$ and
a power singularity $|x-\xi|^{4-n}$ for $n\ge5$, and the corrections $V_Z,V_Y$ obey different estimates, with a different gain for $n=4$.
After normalization by $r=|x-\xi|t^{-\alpha/2}$  are dominated by the single profile $\omega$ in \eqref{eq:omega}, and every
estimate used for the direct problem is an \emph{integral} bound with the same power of $t$ in all dimensions (Lemma~\ref{lem:int}).
The proofs of stability and uniqueness for the inverse problem use only these integral bounds together with Lemmas~\ref{lem:norm} and
\ref{lem:ops} (Remark~\ref{rem:structure}). This is why Theorems~\ref{thm:direct}, \ref{thm:stab}, \ref{thm:equiv}, \ref{thm:A} and
\ref{thm:B} are stated uniformly in $n$; the dimension enters only through the constants, in particular $c_3,c_4$. Pointwise arguments
such as \cite[(12)--(15)]{DD21} for $n=1$ cannot be carried over to $n\ge4$.

\item \emph{Role of the hypotheses.} The absence of a zero-order term ($c\equiv0$) gives $Y\ge0$ with  exact normalization
\eqref{eq:norm}; hence the explicit constants $\lambda_0 E_{\alpha}(\|q\|_0T^\alpha)$ in \eqref{eq:supbound} and in Theorem~\ref{thm:C}, and the
subtraction trick in Lemma~\ref{lem:ops}(a),(c). Hypothesis (A2) is used only in the reduction of Section 4, through
the invariance of the problem under translations in $x_n$ (Lemma~\ref{lem:transl}). Condition \eqref{eq:g0} is needed to solve
\eqref{eq:traceII} for $q$. Hypothesis (A3) is not needed for stability and uniqueness; without it the tangential derivatives
$\partial_{x_i}w$ enter the trace condition, and the singularity of $\nabla_xY$ lowers the order of the Volterra operator from $\alpha$
to $\alpha/2$, which is still integrable.

\item \emph{In what sense the stability is conditional.} Estimate \eqref{eq:stabA} holds on $Q_R$ with a constant that grows with $R$: by
\eqref{eq:Cstar} and $E_{\alpha/2}(s)\sim\frac2\alpha e^{s^{2/\alpha}}$ as $s\to\infty$,
$C_*$ grows like $\exp(cB^{2/\alpha}T)$, where $B$ increases with $R$ through $W$, $V$ and $R$
itself; thus the estimate is uniform only on the a priori set $Q_R$. It is a Lipschitz estimate, but with respect to the data distance $\varrho$ in \eqref{eq:dist}, which contains
${}^CD_t(g-\tilde g)$ and $L'(g-\tilde g)$. In terms of $\|g-\tilde g\|_0$ alone no such estimate can be expected: recovering $q$ requires
evaluating the unbounded operator ${}^CD_t-L'$ on the measurement, which is the source of ill-posedness. With noisy data $g^\delta$,
$\|g^\delta-g\|_0\le\delta$, one must regularize $\mathcal G$ (e.g.\ by mollification in $x'$ and $t$); combined with an a priori bound on a
stronger norm of $g$, interpolation would turn \eqref{eq:stabA} into a H\"{o}lder-type estimate in weaker norms. We do not pursue this here.

\item \emph{Global existence.} Theorems~\ref{thm:Dgen} and \ref{thm:D} give a solution only
for  a short interval $[0,T^*]$. Solutions on different intervals are consistent: if $q'$ and
$q''$ solve the inverse problem on $[0,T']$ and $[0,T'']$, $T'\le T''\le T$, then $q'=q''$ on
$\mathbb{R}^{n-1}\times[0,T']$. Indeed, the restriction to $\mathbb{R}^n\times[0,T']$ of the regular solution
of \eqref{eq:main}, \eqref{eq:init} with coefficient $q''$ belongs to $\mathcal{U}_{T'}(\mathbb{R}^n)$ and
satisfies the equation, hence, by Theorem~\ref{thm:direct}(iii), it is the regular solution
on $[0,T']$; therefore $q''|_{[0,T']}$ solves the inverse problem on $[0,T']$, and
Theorem~\ref{thm:B} with $T$ replaced by $T'$ gives $q'=q''$ there.
Extending a local solution to the whole interval $[0,T]$ would require, in addition, a local existence result on intervals $[T',T'+h]$ that takes into account the memory of the fractional derivative, and an a priori bound $\|q\|_0\le R$ on $[0,T]$; for large data both questions are open. Under (A3) and smallness of $\|a_{nn}/g\|_\ell$ relative to the data, conditions \eqref{eq:Tstar} can be satisfied with $T^*=T$.
\end{enumerate}

\end{document}